\newtheorem{theorem}{Theorem}
\newtheorem{observation}[theorem]{Observation}
\theoremstyle{definition}
\newtheorem{proposition}[theorem]{Proposition}
\newtheorem{lemma}[theorem]{Lemma}
\newtheorem{conjecture}[theorem]{Conjecture}
\def\BZ{\mathbbm Z}
\def\BQ{\mathbbm Q}
\def\BR{\mathbbm R}
\def\BC{\mathbbm C}
\def\s{\sigma}
\def\PSL{\mathrm{PSL}}
\def\GL{\mathrm{GL}}
\def\tq{\tilde{q}}
\def\Vol{\mathrm{Vol}}
\def\a{\alpha} 
\def\z{\zeta}
\def\={\;=\;}
\def\be{\begin{equation}}
\def\ee{\end{equation}}
\def\Li{\mathrm{Li}}
\def\e{\bold e}
\def\th{\theta}
\def\ve{\varepsilon}
\def\SU{\mathrm{SU}}
\def\SO{\mathrm{SO}}
\def\Ahat{\widehat{A}}
\def\Im{\mathrm{Im}}
\def\diag{\mathrm{diag}}
\def\e{\bold e}  
\def\TV{\mathrm{TV}}
\def\ev{\mathrm{ev}}
\def\Irot{I^{\mathrm{rot}}}
\def\Ifug{I^{\sharp}}
\def\Imer{I^\mathrm{mer}}
\def\b{\beta}
\def\Ahat{\hat{A}}
\def\GL{\mathrm{GL}}
\def\sfb{\mathsf{b}}
\def\hb{h}
\newcommand{\wh}[1]{\widehat{#1}}
\renewcommand\thepart{\@Roman\c@part}%
\renewcommand\part{%
   \if@noskipsec \leavevmode \fi
   \par
   \addvspace{6.7ex}%
   \@afterindentfalse
   \secdef\@part\@spart}
\def\@part[#1]#2{%
    \ifnum \c@secnumdepth >\m@ne
      \refstepcounter{part}%
      \addcontentsline{toc}{part}{Part~\thepart.\ #1}%
    \else
      \addcontentsline{toc}{part}{#1}%
    \fi
    {\parindent \z@ \raggedright
     \interlinepenalty \@M
     \normalfont
     \ifnum \c@secnumdepth >\m@ne
       \centering\large\scshape \partname~\thepart.%
       \hspace{1ex}%
     \fi%
     \large\scshape #2%
     \markboth{}{}\par}%
    \nobreak
    \vskip 4.7ex
    \@afterheading}
  \def\@spart#1{
  \refstepcounter{part}%
  \addcontentsline{toc}{part}{#1}%
    {\parindent \z@ \raggedright
     \interlinepenalty \@M
     \normalfont
     \centering\large\scshape #1\par}%
     \nobreak
     \vskip 4.7ex
     \@afterheading}
\renewcommand*\l@part[2]{%
  \ifnum \c@tocdepth >-2\relax
    \addpenalty\@secpenalty
    \addvspace{0.75em \@plus\p@}%
    \begingroup
      \parindent \z@ \rightskip \@pnumwidth
      \parfillskip -\@pnumwidth
      {\leavevmode
       \normalsize \bfseries #1\hfil \hb@xt@\@pnumwidth{\hss #2}}\par
       \nobreak
       \if@compatibility
         \global\@nobreaktrue
         \everypar{\global\@nobreakfalse\everypar{}}%
      \fi
    \endgroup
  \fi}
\def\l@subsection{\@tocline{2}{0pt}{2pc}{6pc}{}}
\begin{document}
\title[Periods, the meromorphic 3D-index and the Turaev--Viro invariant]{
  Periods, the meromorphic 3D-index and the Turaev--Viro invariant}
\author{Stavros Garoufalidis}
\address{
  International Center for Mathematics, Department of Mathematics \\
  Southern University of Science and Technology \\
  Shenzhen 518055, China \newline
  {\tt \url{http://people.mpim-bonn.mpg.de/stavros}}}
\email{stavros@mpim-bonn.mpg.de}


\author{Campbell Wheeler}
\address{Max Planck Institute for Mathematics \\
         Vivatsgasse 7, 53111 Bonn, Germany \newline
         {\tt \url{http://guests.mpim-bonn.mpg.de/cjwh}}}
\email{cjwh@mpim-bonn.mpg.de}

\thanks{
  {\em Key words and phrases}: periods, algebraic curves, 3D-index,
  Turaev--Viro invariants, knots, 3-manifolds, hyperbolic knots,
  hyperbolic 3-manifolds, hyperbolic volume, Volume Conjecture, asymptotics,
  $q$-series, colored holomorphic blocks, Reshetikhin--Turaev invariants.
}

\date{3 September 2022}

\begin{abstract}
  The 3D-index of Dimofte--Gaiotto--Gukov is an interesting collection of $q$-series
  with integer coefficients parametrised by a pair of integers and associated to
  a 3-manifold with torus boundary. In this note, we explain the structure of the
  asymptotic expansions of the 3D-index when $q=e^{2\pi i\tau}$ and $\tau$ tends to
  zero (to all orders and with exponentially small terms included), and discover
  two phenomena: (a) when $\tau$ tends to zero on a ray near the positive real axis,
  the horizontal asymptotics of the meromorphic 3D-index match to all orders with
  the asymptotics of the Turaev--Viro invariant of a knot, in particular explaining
  the Volume Conjecture of Chen--Yang from first principles, (b) when $\tau \to 0$
  on the positive imaginary axis, the vertical asymptotics of the 3D-index
  involves periods of a plane curve (the $A$-polynomial), as opposed to algebraic
  numbers, explaining some predictions of Hodgson--Kricker--Siejakowski and leading
  to conjectural identities between periods of the $A$-polynomial of a knot and
  integrals of the Euler beta-function.
\end{abstract}

\maketitle

{\footnotesize
\tableofcontents
}


\section{Introduction}
\label{sec.intro}

\subsection{The 3D-index}
\label{sub.two}

Quantum invariants attached to 3-dimensional objects, whether defined as
functions at complex roots of unity, $q$-series, or by analytic functions
in the cut plane $\BC'=\BC\setminus(-\infty,0]$, have many interesting and surprising
connections with each other. Although they are well-defined topological invariants,
their relations are largely conjectural and lead to startling statements and numerical
predictions explained in detail in work of Gukov, Mari\~{n}o and
collaborators~\cite{Gukov:BPS, Gukov:resurgence, Gukov:largeN, GGM, GGM:peacock} and
in two papers of Zagier and the first author~\cite{GZ:kashaev,GZ:qseries}.

In this paper, we will focus on the asymptotic properties of a very interesting quantum
knot invariant, the 3D-index of Dimofte--Gaiotto--Gukov~\cite{DGG1,DGG2}
and its meromorphic version of~\cite{GK:mero}. The 3D-index is a collection of
$q$-series with integer coefficients associated to an ideally triangulated 3-manifold
with torus boundary components~\cite{DGG1,DGG2}. These $q$-series occur in mathematical
physics as BPS counts of sypersymmetric field theories and have fascinating
properties. They are topological invariants ~\cite{GHRS,GK:mero} that are
conjecturally related to other quantum invariants such as the complex Chern--Simons
partition function (also known as the state-integrals of Andersen--Kashaev
~\cite{AK}) and even the Kashaev invariant itself~\cite{K95}.

One of the discoveries of~\cite{GZ:qseries} is that the radial asymptotics of
$q$-hypergeometric series in general depend on the ray in which $q$ approaches
$1$ or any complex root of unity. A further discovery of~\cite{GGM, GGM:peacock}
is that as we move the ray, the asymptotic series changes by linear combinations of
power series in $\tq=e^{-2\pi i/\tau}$ (when $q=e^{2\pi i\tau}$) with integer coefficients.



\subsection{Two discoveries for the asymptotics of the 3D-index
  of the $4_1$ knot}

The paper concerns two numerical discoveries concerning the horizontal and the vertical
asymptotics of the meromorphic 3D-index:
\begin{itemize}
\item[(a)] the horizontal asymptotics of the meromorphic
  3D-index $\Imer(0,0)(q)$ matches those of the Turaev--Viro invariant of a knot
  and involves algebraic coefficients,
  \item[(b)] the vertical asymptotics of the meromorphic
  3D-index involve periods (in the sense of Kontsevich--Zagier~\cite{KZ:periods})
  of the $\PSL_2(\BC)$-character variety.
\end{itemize}
Both discoveries stem from the fact that the meromorphic 3D-index is a sum over
the integers of the rotated 3D-index, the latter being a bilinear combination of
colored holomorphic blocks. The first discovery explains the Chen--Yang volume
conjecture~\cite{CY} from first principles and refines it to all orders in
perturbation theory and the second explains some predictions of
Hodgson--Kricker--Siejakowski for the asymptotics of the meromorphic
3D-index~\cite{HKS}.

We will present these discoveries in the order they were found for the $4_1$ knot. 
Our first experiment was to compute the first few terms of the asymptotics of the
meromorphic 3D-index for the $4_1$ knot,
\be
\label{41.mer}
\Imer_{4_1}(0,0)(q) =
1 - 4q - q^2 + 36q^3 + 70q^4 + 100q^5 + 34q^6 - 116q^7 - 410q^8 - 808q^9+\dots\,,
\ee
and found out that
\be
\label{Imernum}
\begin{aligned}
  \Imer_{4_1}(0,0)(q)
  &\sim e^{\frac{2 \Vol(4_1)}{2 \pi \tau}}
\frac{1}{3^{\frac{3}{4}}2^{\frac{1}{2}} \sqrt{\tau}} \\
  &\hspace{-0.5cm}\times\left(1-\frac{19}{24\sqrt{-3}^3}2\pi i\tau
    +\frac{1333}{1152\sqrt{-3}^6}(2\pi i\tau)^2
    -\frac{1601717}{414720\sqrt{-3}^9}(2\pi i\tau)^3+\dots\right) 
\end{aligned}
\ee
when $q=e^{2\pi i\tau}$ and $\tau \to 0$ on a ray sufficiently close to the
positive real axis. We then recognised that the number 1333 appeared in the 
asymptotics of the Turaev--Viro invariant $\TV_{4_1,m+1/2}$ (computed years ago
in unpublished work of the first author), and in fact the above series agreeded
with the asymptotics of the Turaev--Viro invariant $\TV_{4_1,m+1/2}$, after
merely replacing $\tau$ by $m+1/2$.
Surely, this was not a coincidence, and the matching was then checked for the next
7 terms, and still held. The Turaev--Viro invariant is a complex-valued function
at roots of unity defined by a state-sum using an ideal triangulation of the knot
complement, placing representations of $\mathfrak{sl}_2(\BC)$ at the edges, using
the quantum 6j-symbols at each tetrahedron, and summing over the labels. 

The observed matching horizontal asymptotics of the meromorphic 3D-index and of the
Turaev--Viro invariant, aside from explaining a volume conjecture, suggests that
the two invariants are realisations of the same quantum invariant, one being
a $q$-series with integer coefficients, and another being a function at roots of unity.
Moreover, it hints that the Turaev--Viro invariant is one entry of a matrix-valued
quantum knot invariant whose rows and columns are labeled by boundary-parabolic
$\PSL_2(\BC)$-representations of the knot complement. A third realisation of this
invariant is the bilinear combination of asymptotic series, and the three realisations
are related by an underlying holomorphic function on a cut plane
$\BC':=\BC\setminus (-\infty,0]$, much in the theme of~\cite{GZ:kashaev}
and~\cite{GZ:qseries}. 

Our second discovery is about the vertical asymptotics of the meromorphic 3D-index
$\Imer_{4_1}(0,0)(q)$ of the $4_1$ knot. This time as $\tau \in i \BR_+$ tends to
$0$ on the imaginary axis, we found out that
\be
\begin{tiny}
\label{Imernumver}
\begin{aligned}
  \Imer_{4_1}(0,0)(q)
  &\sim
  e^{\frac{2 \Vol(4_1)}{2 \pi \tau}}
\frac{1}{3^{\frac{3}{4}}2^{\frac{1}{2}} \sqrt{\tau}}  
  \left(1-\frac{19}{24\sqrt{-3}^3}2\pi i\tau
    +\frac{1333}{1152\sqrt{-3}^6}(2\pi i\tau)^2
    -\frac{1601717}{414720\sqrt{-3}^9}(2\pi i\tau)^3+\dots\right)\\
  &-i
  e^{\frac{-2 \Vol(4_1)}{2 \pi \tau}}
\frac{1}{3^{\frac{3}{4}}2^{\frac{1}{2}} \sqrt{\tau}}  
  \left(1+\frac{19}{24\sqrt{-3}^3}2\pi i\tau
    +\frac{1333}{1152\sqrt{-3}^6}(2\pi i\tau)^2
    +\frac{1601717}{414720\sqrt{-3}^9}(2\pi i\tau)^3+\dots\right)\\
  &+ \kappa_{4_1} \frac{i}{\tau}+ \kappa_{4_1}' i \tau +\dots\,.
\end{aligned}
\end{tiny}
\ee
The first two series in~\eqref{Imernumver} are exactly those appearing
in~\eqref{Imernum}, but the third series is a new one starting with
$\kappa_{4_1}/(i\tau)$ for an unknown constant 
\be 
\label{k41num}
\kappa_{4_1} = 0.4458257949935614977\dots\,.
\ee
From our previous experience, we tried to
recognise this constant as an algebraic number, after perhaps multiplying it
with a small power of the square root of $\pi$. But all our attempts failed.
With considerable effort, we computed the number $\kappa_{4_1}$ to higher precision,
but the recognition
failed again. We then thought to use a different version of the asymptotic series,
a power series in $\hbar$ with coefficients rational functions of the $A$-polynomial
curve. Doing so, the summation over $n \in \BZ$ corresponds to integration of the
1-loop term of the above series with respect to $u$, as in explained in detail in
Section~\ref{sub.41vertical}. Much to our surprise, we found out that
$\kappa_{4_1}=(\varpi_{4_1}+\overline{\varpi}_{4_1})/(2\pi)$ where
\be
\label{k41exact}
  \varpi_{4_1} =\int_{\BR}\frac{1}{\sqrt{e^{-2u} -2 e^{-u}-1-2 e^u + e^{2u}}}du \,.
\ee
This explained our failure to recognise $(\kappa_{4_1}/2\pi)$ as an algebraic number.
Instead, it is a period of the $A$-polynomial curve, where the word period refers to
the countable subring of the complex numbers introduced by
Kontsevich--Zagier~\cite{KZ:periods}. What is more, the above discovery tells us that
the vertical asymptotics of the rotated 3D-index involve, in addition to the asymptotic
series with algebraic coefficients, further power series with period coefficients!
Continuing our experiment with $4_2$, and using the exact value of $\kappa_{4_1}$
from before, we numerically computed the coefficient
$\kappa_{4_1}'$ of $i \tau$ in~\eqref{Imernumver} and found it to be
\be
\kappa_{4_1}' = 0.10059754907380012789 \dots\,.
\ee
We then theoretically computed the next term in the asymptotic series (which
was an explicit rational function of $e^u$) and 
checked that $\kappa_{4_1}'=2\pi(\varpi_{4_1}'+\overline{\varpi}_{4_1}')$
where
\be
\label{k41exactp}
\varpi_{4_1}' =\int_{C}
\frac{e^{-3u} - e^{-2 u} - 2 e^{-u} + 5 - 2 e^{u} - e^{2 u} + e^{3 u}}{
  (e^{-2u} -2 e^{-u}-1-2 e^u + e^{2u})^{7/2}} du \,,
\ee
for a suitable contour $C$.
Although the above identity would be impossible to guess from its numerical
values, this was hardly a surprise, and confirmed the theory that we will discuss
at a later section. 

This completed our numerical experiments with the $4_1$ knot, which admittedly has
a simple trace field $\BQ(\sqrt{-3})$ and a rather easy second degree $A$-polynomial.
To test our ideas further, we then looked for a knot with more interesting trace
field, i.e., the $5_2$ knot with cubic trace field and a cubic $A$-polynomial.
Repeating the analysis for the meromorphic 3D-index of the $5_2$ knot we found that,
\be
\label{52.mer}
\Imer_{5_2}(0,0)(q) =
   1 - 8q + 21q^2 + 92q^3 + 80q^4 - 95q^5 - 546q^6 - 1092q^7 - 1333q^8 - 756q^9+\dots\,,
\ee
and that when $\tau \in i \BR_+$ tends to zero, we have a similar expansion
to~\eqref{Imernumver} involving two known asymptotic series with algebraic
coefficients, as well as a third series that starts with $i\kappa_{5_2}/\tau + O(1)$
where the new constant is numerically given by 
\be 
\label{kappa52num}
\kappa_{5_2} = 0.3973476532990492346\dots\,.
\ee
We then discovered that this constant is also a period of the $A$-polynomial curve. 
Let us summarise our findings here. Let $y_j(x)$ for $j=1,2,3$ denote the three
roots of the equation~\cite[Eqn.(233)]{GGM:peacock}
\be
\label{p52}
y^2 = (1 - y)(1 - x y)(1 - y/x) \,.
\ee
These are algebraic functions of $x$ that collide when $x$ is a root of
the discriminant of~\eqref{p52}
$x^8 - 6 x^7 + 11 x^6 - 12 x^5 - 11 x^4 - 12 x^3 + 11 x^2 - 6 x + 1$ with respect
to $y$. This discriminant has two real roots at $a_0=0.235344\dots$ and
$a_1=4.249090\dots$ that satisfy $a_0 a_1=1$.
All three branches $y_j(x)$ are real when $x>a_1$, and they are chosen so that $y_3(x)$
real for all $x$ in $\BR$ and $y_1(x)$ and $y_2(x)$ complex-conjugate when $x$
is real with
\be
y_1(a_1+1)= 1.433146\dots, \qquad
y_2(a_1+1)= 3.823982\dots, \qquad
y_3(a_1+1)= 0.182470\dots\,.
\ee
Consider the rational function $1/\delta_{5_2}(x,y)$ on the affine
curve~\eqref{p52} where~\cite[Eqn.(235)]{GGM:peacock}
\be
\label{delta52}
\delta_{5_2}(x,y) = -y + (1 + x + 1/x)/y - 2/y^2 \,,
\ee
and let $f_j(x)=1/\delta_{5_2}(x,y_j(x))$ denote the three branches
for $j=1,2,3$. Since $y_j(x)=y_j(1/x)$, it follows that $f_j(x)=f_j(1/x)$ for all
$x$ and $j=1,2,3$. Consider the periods
\be
\label{k52exact}
\varpi^{(j)}_{5_2} = \int_{\log(a_1)}^\infty f_j(e^u) du, \qquad j=1,2,3, \qquad
\varpi^{(3+j)}_{5_2} = \int_{\log(a_0)}^{\log(a_1)} f_j(e^u) du \,.
\ee
These periods are given by absolutely convergent integrals and
their numerical value can easily be computed to high precision (e.g., to 500 digits),
and given to 20 digits by
\be
\label{k52num}
\begin{tiny}
\begin{aligned}
\varpi^{(1)}_{5_2} &= 0.50571950675093952382\dots\,, &
\varpi^{(4)}_{5_2} &= 0.23686545502355828387\dots - i1.6144780233538382224\dots\,, 
\\
\varpi^{(2)}_{5_2} &= -0.47549190713818022860\dots\,, &
\varpi^{(5)}_{5_2} &= 0.23686545502355828387\dots + i1.6144780233538382224\dots\,,
\\
\varpi^{(3)}_{5_2} &= -0.030227599612759295211\dots\,, &
\varpi^{(6)}_{5_2} &= -0.47373091004711656775\dots\,.
\end{aligned}
\end{tiny}
\ee
%
The above periods satisfy the relations
\be
\label{52prelations}
\varpi^{(1)}_{5_2} + \varpi^{(2)}_{5_2} + \varpi^{(3)}_{5_2} =0,
\qquad
\varpi^{(4)}_{5_2} + \varpi^{(5)}_{5_2} + \varpi^{(6)}_{5_2} =0 \,,
\ee
which is a consequence of the fact that $f_1(x)+f_2(x)+f_3(x)=0$ for all $x$. 

We then found numerically that 
\be
\label{k52guess}
\begin{aligned}
  \kappa_{5_2}
  &=
   \frac{1}{2\pi} (4 \varpi^{(1)}_{5_2}
   + \varpi^{(4)}_{5_2}+\varpi^{(5)}_{5_2})\,.
\end{aligned}
\ee
Our constant is different from the number
$\kappa^{\mathrm{HKS}}_{5_2}=.534186\dots$ of~\cite[Fig.10.2]{HKS}, given to three digits
in the above reference, but three more digits were also given to us by A. Kricker.
We found out that the latter constant also agrees numerically with a period of the
$A$-polynomial (for the real branch),
\be
\label{hks52guess}
\kappa^{\mathrm{HKS}}_{5_2} = -2 \varpi^{(3)}_{5_2} - \varpi^{(6)}_{5_2}
= -\int_\BR f_1(e^u) du \,.
\ee
This leads to a conjectured identity
\be
\label{52periodconj}
\frac{1}{(2\pi i)^2} \int_{i \BR^2}
\mathrm{B}^2\bigl(\tfrac{1}{2}-x,\tfrac{1}{3}+x-y\bigr)
\mathrm{B}\bigl(\tfrac{1}{2}-x,\tfrac{1}{3}+2y\bigr) dx dy
=
-\int_\BR f_1(e^u) du\,,
\ee
(which we have checked to around 20 digits), where
$\mathrm{B}(x,y)=\Gamma(x)\Gamma(y)/\Gamma(x+y)$ is Euler's beta function.
The above identity implies that the double integral on left hand-side is a period
of an elliptic curve~\eqref{p52} over $\BQ$. 


Identities of the above form should be valid for all hyperbolic knots.

\subsection{Horizontal asymptotics and the Turaev--Viro invariant}
\label{sub.conj}

Our observations discussed in the previous section
lead conjectures for the horizontal and the vertical
asymptotics of the meromorphic 3D-index, which not only allow the effective
computation of the asymptotic series, but also explain their structure.  

The ingredients of first conjecture are: the meromorphic 3D-index
(defined in~\cite{GK:mero} and briefly reviewed in Section~\ref{sub.mero} below),
the Turaev--Viro invariant of a knot (defined in~\cite{CY,DKY} and recalled in
Section~\ref{sec.TV}), the asymptotic series $\wh\Phi^{(\s)}_n(\hbar)$
for $s=\s_1$ and $s=\s_2$ of the form
\be
\label{Phisnh}
\wh\Phi^{(\s)}_n(\hbar) = e^{\frac{V_{\s}}{\hbar}} \Phi^{(\s)}_n(\hbar),
\qquad \Phi^{(\s)}_n(\hbar) \in
\frac{1}{\sqrt{\delta_{\s}}}(1+ F_{\s}[[n,\hbar]])
\ee
defined in Section~\ref{sec.phi} below and agreeing when $\s=\s_1$ and $n=0$
with the asymptotic
series of the Kashaev invariant of a knot. The labels $\s_1$ and $\s_2$ refer to
the geometric $\PSL_2(\BC)$-representation $\s_1$ of the hyperbolic knot and
its complex-conjugate representation. The final ingredient is a summation over
the integers. 

\begin{conjecture}
  \label{conj.1}
For every hyperbolic knot $K$ in $S^3$, there exists a computable series
$\wh\Phi^{(\s_1)}_n$ which gives the asymptotic expansion of the meromorphic 3D-index
(as $\tau \to 0$ on a ray sufficiently close to the positive real numbers),
of the Turaev--Viro invariant (as $m \to \infty$)
  \begin{align}
  \label{main1a}
  \Imer_{K}(0,0)(e^{2 \pi i \tau}) & \sim
  \sum_{n \in \BZ} \wh\Phi^{(\s_1)}_{K,n}(2 \pi i \tau)
  \wh\Phi^{(\s_2)}_{K,n}(-2 \pi i \tau) \,, \\
  \label{main1b}
    \TV_{K,m+1/2} & \sim \sum_{n \in \BZ} 
    \wh\Phi^{(\s_1)}_{K,n}(2 \pi i/(m+1/2))
    \wh\Phi^{(\s_2)}_{K,n}(-2 \pi i/(m+1/2)) \,, 
\end{align}
and of the rotated 3D-index (as $\tau \to 0$ on a ray sufficiently close to
$\BR_+$):
\be
\label{main1c}
\Irot(n,n')(e^{2 \pi i \tau}) \sim
\wh\Phi^{(\s_1)}_n  (2\pi i \tau) \wh\Phi^{(\s_2)}_{n'}(-2\pi i \tau) \,.
\ee
\end{conjecture}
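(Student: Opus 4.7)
The plan is to take (c) as the fundamental statement: once the asymptotics of the rotated 3D-index are understood, (a) follows by summing over the integer Fourier index and (b) by identifying the Turaev--Viro state sum as the finite--$q$ analogue of the same bilinear pairing.

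\textbf{Step 1: Prove (c).} I would write $\Irot(n,n')(q)$ as a bilinear pairing $B^+_n(q)\,B^-_{n'}(\tq)$ of colored holomorphic blocks, with $q=e^{2\pi i\tau}$ and $\tq=e^{-2\pi i/\tau}$. As $\tau\to 0$ in a narrow sector around $\BR_+$, apply steepest descent to each factor. The critical points of the classical potential on the Neumann--Zagier gluing variety are in bijection with boundary--parabolic $\PSL_2(\BC)$--representations of $S^3\setminus K$; localising $B^+_n$ at the geometric representation $\s_1$ and $B^-_{n'}$ at its complex conjugate $\s_2$ produces the perturbative expansion $\wh\Phi^{(\s_1)}_n(2\pi i\tau)\,\wh\Phi^{(\s_2)}_{n'}(-2\pi i\tau)$ of (\ref{main1c}), with the integer labels $n,n'$ selecting lifts of the saddles to the universal abelian cover.

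\textbf{Step 2: Derive (a).} By the definition of the meromorphic 3D-index, $\Imer_K(0,0)(q)$ equals an appropriate diagonal sum $\sum_{n\in\BZ}\Irot(n,-n)(q)$ of the rotated index. Applying (c) term by term gives the right-hand side of (\ref{main1a}) as a formal sum of asymptotic series. To justify that this formal sum actually is the asymptotic expansion of the left-hand side, I would establish uniform--in--$n$ Gevrey remainder bounds for the saddle--point expansion of $B^\pm_n$: the $n$-th block should carry a Gaussian factor in $n$ (inherited from the integration measure of the underlying state integral) whose modulus decays fast enough in the chosen sector of $\tau$ to make both the tail in $n$ and the tail in the truncation order of the perturbative series absolutely summable, allowing the interchange of summation and asymptotic expansion.

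\textbf{Step 3: Prove (b).} Starting from the state--sum definition of $\TV_{K,m+1/2}$ via quantum $6j$--symbols attached to an ideal triangulation, I would rewrite each $6j$--symbol at $q=e^{2\pi i/(m+1/2)}$ as a finite product of cyclic quantum dilogarithms. This identifies $\TV_{K,m+1/2}$ with the finite Riemann--sum analogue of the integral representation used in Step~1 for $\Imer_K(0,0)$. A Poisson resummation then splits it as a sum over $n\in\BZ$ of contributions supported near the same tower of critical points, and a parallel steepest--descent analysis, with Faddeev's noncompact dilogarithm replaced by its cyclic counterpart and the modular transformation acting on roots of unity, yields (\ref{main1b}) with $\hbar=2\pi i/(m+1/2)$.

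\textbf{Main obstacle.} The delicate point is the uniform--in--$n$ control required in Steps~2 and~3: passing from an infinite family of Gevrey-1 asymptotic expansions to the asymptotic expansion of their sum is a resurgence-type statement, not a formal one. In Step~3 one additionally needs Euler--Maclaurin control on the discretisation error as the mesh $1/(m+1/2)$ shrinks while infinitely many saddle contributions accumulate. Establishing these uniform estimates -- equivalently, a genuine trans-series / Stokes-compatible structure on the family $\{\wh\Phi^{(\s_j)}_n\}_{n\in\BZ}$ -- is in my view the real content of the conjecture; without it, the termwise identities (\ref{main1a})--(\ref{main1b}) remain at the level of formal asymptotics.
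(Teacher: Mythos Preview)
The statement you are treating is explicitly labeled a \emph{Conjecture} in the paper, and the paper does not claim to prove it. The authors state plainly at the end of the introduction that the factorisation of the 3D-index in terms of (colored) holomorphic blocks is conjectural, as are all the asymptotic statements, and that the Turaev--Viro asymptotics are ``new, and unproven.'' What the paper offers in lieu of proof is: (i) the heuristic chain $\Irot(q)=H(q)BH(q^{-1})^t$ (Eq.~\eqref{IrotH2}) $\leadsto$ the conjectural Borel identity $H_n(q)=(s\wh\Phi_n)(2\pi i\tau)M(\tq)H_0(\tq)\diag(\tau^{\kappa_\a})$ (Eq.~\eqref{HPhi}) $\leadsto$ bilinear asymptotics for $\Irot$ $\leadsto$ picking out the exponentially dominant $(\s_1,\s_2)$ term in a sector near $\BR_+$; and (ii) for the Turaev--Viro side, nothing beyond numerical extrapolation for $4_1$ (Observation~\ref{prop.TV}) and the structural remark that $\TV$ is the Reshetikhin--Turaev invariant of the double. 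So your proposal is not so much wrong as aimed at a target the paper itself does not hit.

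That said, a few points of comparison and correction. Your Step~1 broadly matches the paper's heuristic for~\eqref{main1c}, but your factorisation $B^+_n(q)\,B^-_{n'}(\tq)$ with $\tq=e^{-2\pi i/\tau}$ is off: the block decomposition~\eqref{IrotH2} pairs $q$ with $q^{-1}$, not with the modular $\tq$; the modular $\tq$ only enters later, in the (conjectural) identity~\eqref{HPhi} relating blocks to Borel sums. Your Step~2 is the paper's route from~\eqref{main1c} to~\eqref{main1a} via $\Imer(0,0)(q)=\sum_n\Irot(n,n)(q)$ (your $(n,-n)$ agrees with $(n,n)$ by the symmetry~\eqref{Isym}); the paper then handles the sum over~$n$ by formal Gaussian integration (Section~\ref{sub.asymer}), not by uniform Gevrey bounds. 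Your Step~3, rewriting the $6j$ state-sum via cyclic dilogarithms and Poisson resummation, is genuinely more ambitious than anything in the paper, which for~\eqref{main1b} offers only numerics and the doubling heuristic. Finally, your ``main obstacle'' is well-identified but understated: before uniform-in-$n$ resurgence control, one would first need to \emph{prove} the block factorisation and the Borel identity~\eqref{HPhi}, both of which remain open.
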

This conjectured was verified numerically for the $4_1$ and the $5_2$ knots. 

We make two remarks. The first is that the meromorphic 3D-index is a sum over
the integers of the rotated 3D-index (Equation~\eqref{ImerIrot}),
the latter being a bilinear combination of colored holomorphic blocks
(Equation~\eqref{IrotH2}) whose asymptotics are expressed linearly in terms of a
matrix of $\wh\Phi(\hbar)$-series (Equation~\eqref{HPhi}). Thus, the asymptotics of the
meromorphic 3D-index are bilinear combinations of asymptotic series. But when $\tau$
tends to zero nearly horizontally, one term $(\s_1,\s_2)$ of this bilinear combination
exponentially dominates all others, thus explaining Equation~\eqref{main1a}.
On the other hand, the Turaev--Viro invariant of a knot is Reshetikhin--Turaev
invariant of the double of the knot complement, thus doubling occurs already on the
level of topology.

The second is that the above conjecture can be explained by a conjectural identity
among analytic functions, one being the Borel resummation of the asymptotic
series $\wh\Phi^{(\s)}_n(2\pi i\tau)$ and a second being a state-integral;
see Section~\ref{sub.asyholo} below. When $\tau$ is nearly horizontal, 
one can ignore all $\tq$-corrections and keep only the dominant bilinear
combination of asymptotic series, leading to the asymptotic statements
~\eqref{main1a} and~\eqref{main1c} of Conjecture~\ref{conj.1}.

\subsection{Vertical asymptotics and periods}
\label{sub.vertical}

In this section, we discuss the vertical asymptotics (denoted by
$\tau \downarrow 0$) of the 3D-index, where $\tau$ tends to 0 on the positive
imaginary axis. In this case, we ignore the $\tq$-corrections which are theoretically
unknown and computationally inaccessible. 
A new ingredient is a collection of asymptotic series parametrised by a labeling
set of boundary parabolic $\PSL_2(\BC)$-representation.\footnote{There are subtleties
involved since the latter space can be positive dimensional, however we
(and~\cite{GZ:kashaev,HKS}) will ignore such subtleties.} These series appear
as follows. The rotated 3D-index is a bilinear combination of colored holomorphic
blocks. The vertical asymptotics of the latter are linear combinations of asymptotic
series $\wh\Phi^{(\s)}_n  (2\pi i\tau)$ where $\s$ denotes a boundary parabolic
$\PSL_2(\BC)$-representation. It follows that the vertical asymptotics of the
rotated 3D-index are \emph{bilinear} combinations of asymptotic series. However,
among bilinear combinations of all pairs $(\s,\s')$, only the pairs $(\s,\bar\s)$
contribute, where $\bar\s$ denotes the complex-conjugate of the representation
corresponding to $\s$. 

\begin{conjecture}
  \label{conj.2}
  When $\tau \downarrow 0$ vertically, we have
\be
\label{main2}
\begin{aligned}
\Irot(n,n')(e^{2 \pi i \tau}) \sim & \sum_{\s} \ve_{\s}
\wh\Phi^{(\s)}_n  (2\pi i \tau) \wh\Phi^{(\bar\s)}_{n'}(-2\pi i \tau) \,,
\end{aligned}
\ee
with $\ve_\s=-1$ when $\delta_\s <0$ and $1$ otherwise.
\end{conjecture}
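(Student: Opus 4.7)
\begin{proofx}[Proof plan for Conjecture~\ref{conj.2}]
The overall strategy is to reduce the statement to a bilinear computation in colored holomorphic blocks and then match the result against the catalogue of $\wh\Phi^{(\s)}_n(\hbar)$ series. The plan is, first, to substitute into $\Irot(n,n')(q)$ the factorization~\eqref{IrotH2} expressing it as a bilinear combination $\sum_\alpha B_n^{(\alpha)}(q) \wt B_{n'}^{(\alpha)}(\tq)$ of colored holomorphic blocks, with $\tq = e^{-2\pi i/\tau}$. When $\tau \downarrow 0$ along the positive imaginary axis, both $q$ and $\tq$ tend to $1$, and neither factor is negligible; the $\tq$-corrections are to be discarded in the modular direction, but in the vertical direction all blocks contribute at leading order.

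Second, I would replace each block by its conjectured asymptotic expansion as dictated by~\eqref{HPhi}, namely $B_n^{(\alpha)}(q) \sim \sum_\s M_{\alpha,\s}\,\wh\Phi^{(\s)}_n(2\pi i \tau)$, where $\s$ runs over boundary-parabolic $\PSL_2(\BC)$-representations and $M_{\alpha,\s}$ is the corresponding connection matrix. Plugging in produces a double sum over pairs $(\s,\s')$ of $\wh\Phi^{(\s)}_n(2\pi i \tau) \wh\Phi^{(\s')}_{n'}(-2\pi i \tau)$, weighted by $\sum_\alpha M_{\alpha,\s} \overline{M_{\alpha,\s'}}$ (using the reality $\wt B_{n'}^{(\alpha)}(\tq) = \overline{B_{n'}^{(\alpha)}(q)}$ along $\tau \in i\BR_+$). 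The next step is to identify this weight matrix with $\ve_\s \,\delta_{\s',\bar\s}$, so that only the diagonal pairs $(\s,\bar\s)$ survive, which is the key selection rule distinguishing the vertical from the horizontal regime.

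Third, the signs $\ve_\s$ should be read off from the leading term of~\eqref{Phisnh}. The factor $1/\sqrt{\delta_\s}$ is defined by a principal-branch square root, so that $\wh\Phi^{(\s)}_n(2\pi i\tau)\wh\Phi^{(\bar\s)}_{n'}(-2\pi i\tau)$ carries a prefactor proportional to $1/\sqrt{\delta_\s \bar\delta_\s}$; when $\s$ corresponds to a complex-conjugate pair one has $\delta_\s \bar\delta_\s = |\delta_\s|^2 > 0$ and the sign is positive, whereas when $\s$ is real with $\delta_\s < 0$ the product of the two chosen branches of $\sqrt{\delta_\s}$ differs from $|\delta_\s|$ by a sign, yielding $\ve_\s = -1$. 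Tracking this branch choice through~\eqref{HPhi} gives the claimed sign rule.

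The main obstacle I anticipate is the second step, namely the orthogonality statement $\sum_\alpha M_{\alpha,\s}\overline{M_{\alpha,\s'}} = \ve_\s \delta_{\s',\bar\s}$. It is exactly the content that fixes the selection rule and is not implied by the horizontal case (where a single $(\s_1,\s_2)$ dominates exponentially and no orthogonality is needed). I expect this identity to follow from a state-integral interpretation of the colored holomorphic blocks, in which the pairing in $\alpha$ descends from an $L^2$-type Hermitian pairing of perturbative contributions around distinct critical points, along the lines of the holomorphic-function picture sketched in Section~\ref{sub.asyholo}. Establishing this Hermitian orthogonality rigorously, rather than numerically verifying it for $4_1$ and $5_2$, is the hardest input required to upgrade the conjecture to a theorem.
\end{proofx}
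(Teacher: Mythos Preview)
Your overall strategy---factor $\Irot$ bilinearly via~\eqref{IrotH}/\eqref{IrotH2}, expand each factor asymptotically, and combine---is the right shape and is indeed how the paper verifies Conjecture~\ref{conj.2} for $4_1$ and $5_2$ (see Sections~\ref{sub.41vertical} and~\ref{sub.52vertical}). But two of your key inputs are misidentified, and the mechanism you propose for the selection rule is not the one that operates here.

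First, the second factor in~\eqref{IrotH} is $h^{(\bar\a)}_{n'}(q^{-1})$, not a block in $\tq=e^{-2\pi i/\tau}$. On the vertical ray $\tau\in i\BR_+$ the variable $q=e^{-2\pi|\tau|}$ is real in $(0,1)$ and $q^{-1}$ is real in $(1,\infty)$; there is no complex conjugation relating the two factors, so your reality argument $\wt B_{n'}^{(\alpha)}(\tq)=\overline{B_{n'}^{(\alpha)}(q)}$ does not apply. Consequently the weight you write as $\sum_\alpha M_{\alpha,\s}\overline{M_{\alpha,\s'}}$ is not what appears.

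Second, the identity~\eqref{HPhi} is stated only in a horizontal cone $0<|\arg\tau|<\ve$; the paper says explicitly at the start of Section~\ref{sub.vertical} and of Section~\ref{sub.41vertical} that the $\tq$-corrections are inaccessible vertically. What the paper actually does is determine, \emph{separately and numerically}, the vertical asymptotics of $h^{(\a)}_n(q)$ and of $h^{(\a)}_n(q^{-1})$ as linear combinations of the $\wh\Phi^{(\s)}_n(\pm 2\pi i\tau)$ (equations~\eqref{41hvasy} and~\eqref{52hvasy}). These two sets of asymptotics involve \emph{different} subsets of $\s$'s---for $5_2$, for instance, $h^{(\a)}_n(q)$ sees only $\s_1,\s_2$ while $h^{(\a)}_n(q^{-1})$ brings in $\s_3$---and it is this asymmetry, together with the bilinear pairing matrix $B$ in~\eqref{IrotH} and the quadratic relations~\eqref{41Phiquadrel}/\eqref{52Phiquadrel}, that produces the $(\s,\bar\s)$ selection. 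No Hermitian orthogonality of a single connection matrix is invoked.

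In short: keep the bilinear skeleton, but replace the $\tq$/conjugation step by independent vertical asymptotics of $h^{(\a)}_n(q)$ and $h^{(\a)}_n(q^{-1})$, then substitute into~\eqref{41.irot} or~\eqref{52Irot} and simplify using the quadratic relations among the $\wh\Phi$-series.
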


Note that after removing the exponential factors, the constant terms of the power
series in $\tau$ in~\eqref{main2} will all be positive numbers. Moreover,
the asymptotic series are polynomially bounded with respect to $\tau$--perhaps
a consequence of a unitarity property of the rotated 3D-index.

We next discuss the vertical asymptotics of the meromorphic 3D-index, ignoring
as above any $\tq$-corrections. 
Here, we find a surprise and a new phenomenon: the coefficients of the asymptotic
series are no longer algebraic numbers, but periods (in the sense of
Kontsevich--Zagier~\cite{KZ:periods}) on a plane curve which is none other than
the $A$-polynomial of the knot~\cite{CCGLS}. 
To phrase our conjecture, instead of the power series $\Phi^{(\s)}_n(\hbar)$
in $\hbar$ with coefficients polynomials in $n$, we will use the power series
$\Phi(x,y,\hbar)$ in $\hbar$ with coefficients rational functions in the
$A$-polynomial curve. The latter are the $x$-deformed asymptotic series
studied in~\cite{GGM:peacock}. The relation between the two types of series is
given by $\Phi^{(\s)}_n(\hbar) = \Phi^{(\s)}(e^{n\hbar},\hbar)$ where
$\Phi^{(\s)}(x,\hbar)=\Phi(x,y_\s(x),\hbar)$ and $y_\s=y_s(x)$ is a branch of the
$A$-polynomial curve.

\begin{conjecture}
\label{conj.3}
When $\tau \downarrow 0$ vertically, we have
\be
\label{main3}
\begin{aligned}
  \Imer(0,0)(e^{2 \pi i \tau}) \sim & \sum_{\s \,\, \mathrm{complex}}
  \sum_{n \in \BZ}
  \wh\Phi^{(\s)}_n  (2\pi i \tau) \wh\Phi^{(\bar\s)}_n(-2\pi i \tau)
  \\ & + \frac{1}{2\pi i \tau} \sum_{\s}
  \int_{C_\s} \wh\Phi^{(\s)} (e^{\tau u}, 2\pi i \tau)
  \wh\Phi^{(\s)} (e^{\tau u},-2\pi i \tau) du\,.
\end{aligned}
\ee
\end{conjecture}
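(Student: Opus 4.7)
The plan is to start from the identity $\Imer(0,0)(q) = \sum_{n \in \BZ} \Irot(n,n)(q)$ (Equation~\eqref{ImerIrot}) and insert the vertical asymptotic expansion of Conjecture~\ref{conj.2} into each summand. The problem then reduces to understanding the resulting double sum
\[
\sum_{n \in \BZ} \sum_{\s} \ve_\s \, \wh\Phi^{(\s)}_n(2\pi i \tau)\, \wh\Phi^{(\bar\s)}_n(-2\pi i\tau)
\]
and showing that it rearranges into the two contributions of Conjecture~\ref{conj.3}.

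The representations $\s$ naturally split into two classes: complex-conjugate pairs with $\bar\s \neq \s$, and real branches with $\bar\s = \s$ and $V_\s \in \BR$. For the first class, the product $\wh\Phi_n^{(\s)}(\hbar)\wh\Phi_n^{(\bar\s)}(-\hbar)$ retains an exponential factor $\exp((V_\s - V_{\bar\s})/\hbar)$, and via the identity $\Phi_n^{(\s)}(\hbar) = \Phi^{(\s)}(e^{n\hbar},\hbar)$ recalled in Section~\ref{sub.vertical}, it depends on $n$ through $x = e^{n\hbar}$. I would expand the effective potential $V^{(\s)}(x) - V^{(\bar\s)}(x)$ around the geometric critical point as a quadratic-plus-higher form in $n\hbar$; in the vertical regime this should produce enough Gaussian-type damping to make the sum over $n$ absolutely convergent and reproduce exactly the first term of the conjecture. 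For real $\s$, the exponential factors cancel identically ($e^{V_\s/\hbar}\cdot e^{-V_\s/\hbar}=1$), the summand is only polynomially bounded in $n$, and the sum has to be treated by a different mechanism.

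For the real branches the conversion to an integral is accomplished by Poisson summation applied to $g(n,\hbar) = \wh\Phi^{(\s)}(e^{n\hbar},\hbar)\wh\Phi^{(\s)}(e^{n\hbar},-\hbar)$, writing $\sum_n g(n,\hbar) = \sum_{k\in\BZ} \int g(n,\hbar)e^{-2\pi i k n}dn$. A change of variables identifying $e^{n\hbar}$ with $e^{\tau u}$ supplies the Jacobian factor $1/(2\pi i\tau)$, and the $k=0$ mode reproduces precisely the integral $\frac{1}{2\pi i\tau}\int_{C_\s}\wh\Phi^{(\s)}(e^{\tau u},2\pi i\tau)\wh\Phi^{(\s)}(e^{\tau u},-2\pi i\tau)du$ of Conjecture~\ref{conj.3}. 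The nonzero Fourier modes carry the rapidly oscillating factor $e^{-2\pi i k u}$; by contour deformation exploiting the analyticity of $\wh\Phi^{(\s)}$ on the $A$-polynomial curve, they should contribute only exponentially small corrections in $\tau$, to be absorbed into the $\tq$-corrections that we have declared off-limits in the vertical regime.

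The main obstacles I anticipate are threefold. First, obtaining uniform-in-$n$ remainder bounds in Conjecture~\ref{conj.2} strong enough to interchange the asymptotic expansion with the infinite sum --- this is the deepest analytic step and likely requires the full Borel-resummation framework alluded to in Section~\ref{sub.asyholo}. Second, determining and justifying the integration contour $C_\s$: for a real branch it must navigate the zeros of $\delta_\s(e^{\tau u})$ at which the sign $\ve_\s$ flips, consistent with the numerical contour choice suggested by~\eqref{k41exact} and~\eqref{k52exact}. Third, rigorously controlling the nonzero Poisson modes and showing that their omission is consistent with the $\tq$-truncation; this is conceptually clean but technically delicate, as it requires a precise estimate of the distance from the chosen contour to the nearest singularity of $\wh\Phi^{(\s)}(x,\hbar)$ on the branch.
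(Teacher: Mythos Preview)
This statement is a \emph{conjecture}; the paper does not prove it but offers heuristic motivation (Section~\ref{sub.41vertical}) and numerical verification for $4_1$ and $5_2$. There is thus no proof to compare against, but your heuristic derivation has a structural gap.

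Your dichotomy ``complex $\s\Rightarrow$ Gaussian sum, real $\s\Rightarrow$ Poisson integral'' does not match Conjecture~\ref{conj.3}. The integral there is summed over \emph{all} $\s$, and its integrand is the diagonal product $\wh\Phi^{(\s)}(x,2\pi i\tau)\,\wh\Phi^{(\s)}(x,-2\pi i\tau)$ (same $\s$ twice), not the $(\s,\bar\s)$-product supplied by Conjecture~\ref{conj.2}. The $4_1$ knot is decisive: it has \emph{no} real branches at $x=1$ (both $\s_1,\s_2$ are complex), yet the period term $\kappa_{4_1}\,i/\tau$ in~\eqref{Imernumver} is nonzero, and the paper identifies it with an integral of $1/\delta_{4_1}$ over both complex branches (compare~\eqref{k41exact},~\eqref{41y},~\eqref{41PP}). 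Your mechanism, which produces the integral only from real $\s$ by Poisson-summing the Conjecture~\ref{conj.2} terms, cannot generate $\kappa_{4_1}$ at all.

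The paper's own hint (Section~\ref{sub.41vertical}) is that the diagonal terms $\wh\Phi^{(\s_j)}_n(\hbar)\wh\Phi^{(\s_j)}_n(-\hbar)$ for $j=1,2$---which do \emph{not} appear in Conjecture~\ref{conj.2}---vanish identically at the level of the formal $n\hbar$-expansion by the quadratic relation~\eqref{41Phiquadrel}, yet leave a nonzero residue once one passes to the $x$-deformed series $\Phi(x,y,\hbar)$ on the $A$-polynomial curve and replaces the sum over $n$ by an integral. The label $\s$ in the integral indexes a global branch $y_\s(x)$ of the curve, and as $x$ moves away from $1$ along the integration contour these branches change character (complex-conjugate pairs merging into real ones at the discriminant locus $x=a_0,a_1$). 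So the integral contribution is a subleading correction invisible to Conjecture~\ref{conj.2} and not recoverable from it alone; your obstacle list should include this, and the derivation needs input beyond the $(\s,\bar\s)$-expansion.
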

The above conjecture explains the shape of the vertical asymptotics of the
meromorphic 3D-index found by~\cite{HKS}. A bit of mystery remains: our vertical
asymptotics include more terms that those of~\cite{HKS}. For example, for the
$4_1$ knot, the series starting with the term $\kappa_{4_1}$ in
Equation~\eqref{Imernumver} is missing from~\cite[Sec.10.1.1]{HKS}, and for the
$5_2$ knot, our constant $\kappa_{5_2}$ differs from the one
of~\cite[Fig.10.2]{HKS}. We do not understand this discrepancy which involves
some global differences in the contours appearing in Equation~\eqref{main3}.

The paper is organised as follows.

In Section~\ref{sec.3d}, we introduce a rotated
form of the 3D-index (see Equation~\eqref{eq.Irot} below) which conveniently 
decouples two commuting actions of the $q$-Weyl algebra, and also expresses the
meromorphic 3D-index as a trace of the rotated 3D-index
(see Equation~\eqref{ImerIrot} below). We next express the rotated 3D-index
bilinearly in terms of colored holomorphic blocks $h^{(\a)}_n(q)$. These blocks
(which are $q$-hypergeometric series) can be defined and computed either from
a factorisation of a state-integral, or from a linear $q$-difference equation and
their initial values at $n=0$ given in~\cite{GZ:qseries}. A side bonus of the colored
holomorphic blocks is an effective computation of the $q$-series expansion of
the rotated and of the meromorphic 3D-index.

We then compute the asymptotics of the holomorphic blocks in terms of (completed)
power series $\Phi^{(\s)}_n(h)$. These power series can be defined and computed using
either formal Gaussian integration~\cite{DG} (which amounts to a stationary phase
expansion of a state-integral), or from a linear $q$-difference equation and their
initial values at $n=0$ given in~\cite{GZ:kashaev}. In particular, the dominant series
$\Phi^{(\s_1)}(h)$ is exactly the asymptotic expansion of the Kashaev invariant to
all orders in $h=2\pi i/N$~\cite{GZ:kashaev}. 

Finally, we compute the radial asymptotics of the rotated 3D-index in terms of bilinear
combinations of the completed power series $\Phi^{(\s)}_n(h)$, which, after a formal
Gaussian integration with respect to $n$, give the horizontal asymptotics of the
rotated 3D-index.

Analysing further the possible bilinear asymptotics that can appear, combined with
the fact that the meromorphic 3D-index is bounded when $\tau$ tends to zero vertically,
allows us to compute the horizontal asymptotics of the meromorphic 3D-index.

We end this introduction with a remark on rigour. Although the 3D-index
(in its original, or rotated form) is a well-defined collection of
$q$-series~\cite{DGG1,DGG2} which was shown to be a topological invariant of hyperbolic
knots in two different ways~\cite{GHRS, GK:mero}, its factorisation in terms of
holomorphic blocks or colored holomorphic blocks discussed in Sections~\ref{sub.holo}
and~\ref{sub.colored} are conjectural, and so are our asymptotic statements which
form the main body of this paper. Likewise, the Turaev--Viro invariant of a knot
is a well-defined topological invariant, however the conjecture on its
all-orders asymptotic expansion is new, and unproven. However, the stated conjectures
for the 3D-index and for the Turaev--Viro invariant are numerically testable
(and have been tested for the case of the $4_1$ and the $5_2$ knots), and
lead to surprising predictions as well as an explanation of the structure and
computation of the asymptotics of these quantum invariants.


\section{A review of the 3D-index}
\label{sec.3d}

In this section, we review the 3D-index in various forms: the original one
of~\cite{DGG1, DGG2}, the rotated one, the one coming from colored holomorphic
blocks, and the meromophic version~\cite{GK:mero}. 

In their seminal papers, Dimofte--Gaiotto--Gukov introduced the 
3D-index~\cite{DGG1, DGG2}, which comes from the low energy limit of an
$N=2$ sypersymmetric conformal theory in 6 dimensions, compactified in 3-dimensions. 
More concretely, the 3D-index is a collection of Laurent 
$q$-series with integer coefficients attached to every integer homology 
class of the boundary of an ideally triangulated 3-manifold with 
torus boundary components. It is known that this collection is a 
topological invariant of hyperbolic 3-manifolds~\cite{GHRS}, and that 
it can be assembled to a meromorphic function on a complex 
torus~\cite{GK:mero} which, too, is a topological invariant. 

In a further direction, Beem--Dimofte--Pasquetti studied 
the relationship between the 3D-index and a vector space of holomorphic 
blocks~\cite{holo-blocks}.

We next discuss several equivalent formulations of the 3D-index, namely the
fugacity version, the rotated version, the meromorphic version, and the
relation with holomorphic blocks. 


\subsection{The rotated 3D-index}
\label{sub.rot}

Recall that the 3D-index of a 3-manifold with a torus boundary component (marked
by a meridian and a longitude) is a collection of $q$-series parametrised by a pair
of integers $(m,e)$. It turns out that the 3D-index is annihilated by two actions of
the $q$-Weyl algebra, which are acting in the variables $(m,e)$ in a coupled way.
The rotated 3D-index is also an equivalent collection of $q$-series where the two
actions are decoupled. To define the rotated 3D-index, we suppress the ambient
3-manifold $M$ writing $I(m,e)(q)$ instead of $I_M(m,e)(q) \in \BZ((q^{\frac{1}{2}}))$. 
Recall two commuting actions of the $q$-Weyl algebra~\cite[Eqn.2.29a]{DGG1}:


\be
\label{eq.LM}
\begin{aligned}
L_+ I(m,e)(q) &= q^{\frac{e}{2}} I(m+1,e)(q)\,, & \qquad M_+ I(m,e)(q) &= 
q^{\frac{m}{2}} I(m,e-1)(q)\,, \\  
L_- I(m,e)(q) &= q^{\frac{e}{2}} I(m-1,e)(q)\,, & \qquad M_- I(m,e)(q) &= 
q^{\frac{m}{2}} I(m,e+1)(q) \,.
\end{aligned}
\ee
It is easy to see that the four operators $L_{\pm}$ and $M_{\pm}$
commute except in the following instance:
$$
L_+ M_+ = qM_+ L_+\,, \qquad L_- M_- = q^{-1} M_- L_- \,.
$$
The two commuting actions of the $q$-Weyl algebra mix the coordinates of
the 3D-index. A separation of the two actions is possible and this
motivates the {\em rotated form} of the 3D-index given by
\be
\label{eq.Irot}
\Irot(n,n')(q) = \sum_{e \in \BZ} I(n-n',e)(q) q^{\frac{e(n+n')}{2}} \,.
\ee
When the ideal triangulation has a strict angle structure, it can be shown
that the above series is well-defined. In the above equation, and the ones that
follow, convergence is ensured if the minimum $q$-degree of $I(m,e)(q)$ is a
positive quadratic function of $(m,e)$ bounded below by $O(m^2+e^2)$ except at
finitely many directions of linear growth $O(|m|,|e|)$ (parallel to the normal
vectors of the Newton polygon of the $A$-polynomial). This convergence is deduced
from the results of~\cite{GHRS,GHHR}.


\begin{lemma}
\label{lem.rot}
We have:
\be
\label{eq.LM2}
\begin{aligned}
L_+ \Irot(n,n')(q) &= \Irot(n+1,n')(q)\,, & \qquad M_+ \Irot(n,n')(q) &= 
q^{n} \Irot(n,n')(q)\,, \\  
L_- \Irot(n,n')(q) &= \Irot(n,n'+1)(q)\,, & \qquad M_- \Irot(n,n')(q) &= 
q^{-n'} \Irot(n,n')(q) \,.
\end{aligned}
\ee
\end{lemma}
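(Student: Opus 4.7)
The plan is to establish each of the four identities by direct substitution into the defining series
$$\Irot(n,n')(q) = \sum_{e \in \BZ} I(n-n',e)(q)\, q^{\frac{e(n+n')}{2}}\,,$$
applying the operators $L_\pm, M_\pm$ termwise, and then reindexing the sum over $e$ when necessary. The convergence of the series (guaranteed by the quadratic lower bound on the $q$-degree of $I(m,e)(q)$ noted just before the lemma) ensures that termwise action and reindexing are legitimate.

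First I would handle $L_\pm$, which are easier since they do not shift the summation variable. For $L_+$, applying the definition~\eqref{eq.LM} termwise gives
$$L_+\Irot(n,n')(q) = \sum_{e\in\BZ} q^{\frac{e}{2}}\, I(n-n'+1,e)(q)\, q^{\frac{e(n+n')}{2}} = \sum_{e\in\BZ} I((n+1)-n',e)(q)\, q^{\frac{e((n+1)+n')}{2}}\,,$$
since $\tfrac{e}{2}+\tfrac{e(n+n')}{2}=\tfrac{e((n+1)+n')}{2}$, which is precisely $\Irot(n+1,n')(q)$. The identity for $L_-$ follows by the same computation with $n-n'-1 = n-(n'+1)$ and $\tfrac{e}{2}+\tfrac{e(n+n')}{2} = \tfrac{e(n+(n'+1))}{2}$.

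For $M_\pm$, the extra step is a shift of the summation index. For $M_+$,
$$M_+\Irot(n,n')(q) = \sum_{e\in\BZ} q^{\frac{n-n'}{2}}\, I(n-n',e-1)(q)\, q^{\frac{e(n+n')}{2}}\,,$$
and setting $e' = e-1$ gives a prefactor $q^{\frac{n-n'}{2}}\cdot q^{\frac{n+n'}{2}} = q^n$ multiplying $\Irot(n,n')(q)$. The case $M_-$ is symmetric, with the shift $e'=e+1$ producing a prefactor $q^{\frac{n-n'}{2}}\cdot q^{-\frac{n+n'}{2}} = q^{-n'}$.

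There is no real obstacle beyond bookkeeping; the only point worth flagging is the legitimacy of the termwise operations and reindexing, which is justified by the convergence hypothesis already discussed in the paragraph preceding the lemma. The computation also makes manifest the conceptual content of the rotation~\eqref{eq.Irot}: it diagonalizes the $M_\pm$-actions (so that $M_+$ and $M_-$ act by monomials $q^n$ and $q^{-n'}$ in the two separate variables $n$ and $n'$), while converting the $L_\pm$-actions into pure shifts in $n$ and $n'$ respectively, so that the two commuting copies of the $q$-Weyl algebra are decoupled onto the two variables.
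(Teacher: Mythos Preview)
Your proof is correct and follows essentially the same approach as the paper: direct substitution of the definition of $\Irot$, termwise application of the operators from~\eqref{eq.LM}, and a shift of the summation index for $M_\pm$. The paper omits the $(L_-,M_-)$ cases as ``similar,'' exactly as you do.
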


\begin{proof}
We have:
\begin{align*}
L_+ \Irot(n,n')(q) &= \sum_e q^{\frac{e}{2}} I(n+1-n',e)(q) q^{\frac{e(n+n')}{2}} \\
&= \sum_e I(n+1-n',e)(q) q^{\frac{e(n+1+n')}{2}} = \Irot(n+1,n')(q)\,,
\end{align*}
and
\begin{align*}
M_+ \Irot(n,n')(q) &= \sum_e q^{\frac{n-n'}{2}} I(n-n',e-1)(q) q^{\frac{e(n+n')}{2}} 
= \sum_e I(n-n',e-1)(q) q^{\frac{e(n+n')+n-n'}{2}} \\
&= \sum_e I(n-n',e)(q) q^{\frac{(e+1)(n+n')+n-n'}{2}} 
= \sum_e I(n-n',e)(q) q^{\frac{e(n+n')}{2}} q^{n} 
\\ &= q^n \Irot(n,n')(q) \,.
\end{align*}
The case of $(L_-,M_-)$ is similar.
\end{proof}

Note that $\Irot_K(0,0)(q)$ coincides with the invariant denoted by
$I^{\mathrm{tot}}_K(q)$ in Equation (2) of~\cite{GHRS}, and with the invariant
studied in~\cite{GZ:qseries}.

We now discuss some symmetries of the 3D-index and its rotated version. The symmetry,
\be
\label{Isym0}
I(m,e)(q)=I(-m,-e)(q)\,,
\ee
of the 3D-index implies one for the rotated version $\Irot(n,n')(q)=\Irot(-n,-n')(q)$.
The rotated 3D-index has two more symmetries
\be
\label{Isym}
\Irot(n,n')(q)=\Irot(n,-n')(q)=\Irot(-n,n')(q)
=\Irot(-n,-n')(q) \,,
\ee
and 
\be
\label{Ikmsym}
\Irot(n,n')(q^{-1}) = \Irot(n',n)(q) \,,
\ee
which are manifest from the expression of the rotated 3D-index in terms of
colored holomorphic blocks discussed below.

We next discuss how to express the original 3D-index from its rotated version,
reversing Equation~\eqref{eq.Irot}.

\begin{lemma}
\label{lem.rot2}  
For all integers $m$ and $e$ we have:
\be
\label{rot2}
I(m,e)(q) = q^{-\frac{me}{2}} \sum_{k \in \BZ} \Irot(m+k,k)(q) q^{-ke} \,.
\ee
\end{lemma}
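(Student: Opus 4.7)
The plan is to prove~\eqref{rot2} by direct substitution of the defining formula~\eqref{eq.Irot} followed by a Fourier-type collapse of the resulting double sum on $\BZ$. First I would set $n=m+k$ and $n'=k$ in~\eqref{eq.Irot}, which gives $n-n'=m$ and $n+n'=m+2k$, hence
\begin{equation*}
\Irot(m+k,k)(q) \;=\; \sum_{e'\in\BZ} I(m,e')(q)\, q^{e'(m+2k)/2}.
\end{equation*}
Multiplying by $q^{-ke}$ and summing over $k\in\BZ$, then swapping the order of summation, yields
\begin{equation*}
\sum_{k\in\BZ} \Irot(m+k,k)(q)\, q^{-ke}
\;=\; \sum_{e'\in\BZ} I(m,e')(q)\, q^{e'm/2} \sum_{k\in\BZ} q^{k(e'-e)}.
\end{equation*}

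The next step is to invoke the $\BZ$-Fourier identity $\sum_{k\in\BZ} q^{k(e'-e)} = \delta_{e,e'}$, which collapses the $e'$-sum to the single term $e'=e$, leaving $I(m,e)(q)\,q^{em/2}$. Multiplying by the prefactor $q^{-me/2}$ of~\eqref{rot2} cancels $q^{em/2}$ and recovers $I(m,e)(q)$, as required.

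The main obstacle, and the only non-bookkeeping point, is justifying the two formal manipulations: (i) the interchange of summations and (ii) the formal delta-function identity on $\BZ$. I would handle both by appealing to the convergence regime described in the paragraph following~\eqref{eq.Irot}, namely the quadratic lower bound $\Omega(m^2+e^2)$ on the minimum $q$-degree of $I(m,e)(q)$ coming from the Newton polygon of the $A$-polynomial, together with the results of~\cite{GHRS,GHHR}. Under this hypothesis, the $q$-adic valuation of $I(m,e')(q)q^{e'(m+2k)/2-ke}$ is dominated by a positive definite quadratic form in $(k,e')$ outside finitely many linear directions, so only finitely many pairs $(k,e')$ contribute to any given power of $q$. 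This simultaneously legitimises Fubini and reduces the delta-function identity to the elementary fact that $\sum_{k} q^{k(e'-e)}$ contributes nontrivially at a given $q$-order only when $e'=e$, in which case every term equals $1$ and the finiteness from convergence forces the coefficient to be the one coming from the single diagonal summand. Once this is set up, the rest is the pure formal calculation above.
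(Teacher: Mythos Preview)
Your proposal is correct and follows essentially the same route as the paper: substitute~\eqref{eq.Irot}, swap the sums, and collapse via $\sum_{k\in\BZ} q^{k(e'-e)}=\delta_{e,e'}$. The paper treats this delta identity purely formally (``interpreted distributionally''), whereas you attempt a $q$-adic finiteness justification; note however that your argument does not quite go through as stated, since the exponent $e'm/2+k(e'-e)$ is only linear in $k$, and on the diagonal $e'=e$ infinitely many $k$ contribute the same term, so the identity really is distributional rather than a consequence of termwise convergence.
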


\begin{proof}
We have:
\begin{align*}
  \sum_k \Irot(m+k,k)(q) q^{-k e} &= \sum_{k,e'} I(m,e')(q) q^{\frac{e'm}{2}+k(e'-e)} =
  \sum_{e'} q^{\frac{e'm}{2}} I(m,e')(q) \sum_k q^{k(e'-e)} \\ &
  = \delta_{e,e'} \sum_{e'} q^{\frac{e'm}{2}} I(m,e')(q) = q^{\frac{e m}{2}} I(m,e)(q) \,.
\end{align*}
For completeness, we can also compute the action of the operators $L_\pm$ and
$M_\pm$ on the right hand side $J(m,e)(q)$ of Equation~\eqref{rot2}. We have:
\begin{align*}
  L_+ J(m,e)(q) &= q^{-\frac{me}{2}} \sum_{k \in \BZ} L_+ \Irot(m+k,k)(q) q^{-ke} =
  q^{-\frac{me}{2}} \sum_{k \in \BZ} \Irot(m+k+1,k)(q) q^{-ke} 
\\ &= q^{\frac{e}{2}} J(m+1,e)(q) \,,
\end{align*}
and
\begin{align*}
  M_+ J(m,e)(q) &= q^{-\frac{me}{2}} \sum_{k \in \BZ} M_+ \Irot(m+k,k)(q) q^{-ke} =
  q^{-\frac{me}{2}} \sum_{k \in \BZ} q^{m+k} \Irot(m+k,k)(q) q^{-ke} 
\\ &= q^{\frac{m}{2}} J(m,e-1)(q) \,.
\end{align*}
The case of $L_-$ and $M_-$ is similar.
\end{proof}


\subsection{The meromorphic 3D-index}
\label{sub.mero}

We now discuss another realisation of the 3D-index as a meromorphic function.
In~\cite{GK:mero}, it was shown that if a triangulation has a strict
angle structure, then the following function
\be
\label{ImerI}
\Imer(z,w)(q) = \sum_{m,e \in \BZ} I(m,e)(q) e^{2 \pi i \tau (m z + e w)}
\ee
is meromorphic and regular at $(z,w)=(0,0)$. The relation between the meromorphic
and the rotated 3D-index is given by the next lemma.

\begin{lemma}
  \label{lem.rotmero}
  For all integers $\ell$ and $\ell'$, we have:
  \be
  \label{ImerIrotall}
  \Imer(\ell,\ell')(q) = q^{-2\ell \ell'}
  \sum_{n \in \BZ} q^{2 \ell n} \Irot(n, n-2\ell')(q) \,.
  \ee
\end{lemma}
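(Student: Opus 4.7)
The plan is to substitute the inversion identity of Lemma~\ref{lem.rot2} into the definition of $\Imer(\ell,\ell')(q)$ and reduce via a formal Fourier orthogonality, in the same spirit as the proof of Lemma~\ref{lem.rot2} itself. Concretely, I would begin with the defining expression
\[
\Imer(\ell,\ell')(q) = \sum_{m,e\in\BZ} I(m,e)(q)\, q^{m\ell+e\ell'},
\]
which is absolutely convergent for $|q|<1$ thanks to the quadratic lower bound on the minimum $q$-degrees of $I(m,e)(q)$ available under a strict angle structure (see~\cite{GHRS,GHHR}). Substituting $I(m,e)(q) = q^{-me/2}\sum_{k\in\BZ}\Irot(m+k,k)(q)\,q^{-ke}$ from Lemma~\ref{lem.rot2} and reindexing via $n=m+k$ transforms this into the triple sum
\[
\sum_{n,e,k\in\BZ} \Irot(n,k)(q)\, q^{\,\ell(n-k)\,+\,e(\ell'-(n+k)/2)}.
\]

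The key step is the inner sum $\sum_{e\in\BZ} q^{e(\ell'-(n+k)/2)}$, which is the same formal $\delta$-identity already used in the proof of Lemma~\ref{lem.rot2}: it enforces the constraint $n+k=2\ell'$, i.e.\ $k=2\ell'-n$. Plugging this in collapses the triple sum to
\[
q^{-2\ell\ell'}\sum_{n\in\BZ} q^{2\ell n}\,\Irot(n,\,2\ell'-n)(q),
\]
and the symmetry $\Irot(n,n')(q)=\Irot(n,-n')(q)$ recorded in~\eqref{Isym} then rewrites $\Irot(n,2\ell'-n)$ as $\Irot(n,n-2\ell')$, producing the stated identity.

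The main obstacle is that the reindexed triple sum is not absolutely convergent, so the Fourier inversion step is not immediately rigorous. The cleanest way around this is to perform the manipulations with a generic complex parameter $z$ in place of the integer $\ell$, working in a region where all the sums are absolutely convergent and the inner $e$-sum is a genuine convergent geometric-type series rather than a formal delta, and then specialise to $z=\ell$ using the meromorphicity of $\Imer$ on $\BC^2$ established in~\cite{GK:mero}. Alternatively, one may simply accept the formal manipulation at exactly the same level of rigour that was already used in the proof of Lemma~\ref{lem.rot2} itself.
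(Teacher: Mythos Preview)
Your proposal is correct and follows essentially the same route as the paper's own proof: substitute Lemma~\ref{lem.rot2} into the defining double sum for $\Imer(\ell,\ell')(q)$, collapse the $e$-sum via the same formal $\delta$-identity $\sum_e q^{ae}=\delta_a$ used there, and finish with the Weyl symmetry~\eqref{Isym}. The only cosmetic difference is that the paper keeps the variables $(m,k)$ through the $e$-sum (which then forces $m$ even and $k=\ell'-m/2$) before reindexing to $n$, whereas you reindex to $n=m+k$ first; your remarks on the formal nature of the Fourier inversion are likewise in line with the paper's treatment.
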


\begin{proof}
Using Equations~\eqref{ImerI} and~\eqref{rot2},  we have:
  \begin{align*}
    \Imer(\ell,\ell')(q) &= \sum_{e,m} I(m,e)(q) q^{m \ell + e \ell'}
    = \sum_{e,m} q^{-\frac{me}{2}} \sum_k
    \Irot(m+k,k)(q) q^{m \ell + e \ell'-ke} \\
    &=
    \sum_{m,k} q^{m\ell} \Irot(m+k,k)(q) \sum_e q^{-(\frac{m}{2}+k-\ell')e} =
    \sum_{m,k} q^{m\ell} \Irot(m+k,k)(q) \delta_{\frac{m}{2}+k-\ell',0} \,,
  \end{align*}
  where the last identity follows from the fact that $\sum_{e}q^{ae}=\delta_a$
  (interpreted distributionally). 
  When $m$ is odd the delta function vanishes, and for $m=2m'$, we obtain
  that
\begin{align*}
  \Imer(\ell,\ell')(q) &=
  \sum_{m',k} q^{2m'\ell} \Irot(2m'+k,k)(q) \delta_{m'+k-\ell',0} =
  \sum_{m'} q^{2m'\ell} \Irot(m'+\ell',\ell'-m')(q) \\
  &= \sum_{m'} q^{2m'\ell} \Irot(m'+\ell',m'-\ell')(q) =
  \sum_n q^{2n\ell} \Irot(n+\ell',n-\ell')(q)\,, 
\end{align*}
where the next to last equality follows from Equation~\eqref{Isym}.
The result follows.
\end{proof}


In~\cite{HKS}, the asymptotics of $\Imer(0,0)(q)$ when $q \in (0,1)$ tends to $1$
were studied. The relation of this $q$-series and the rotated 3D-index follows
from the lemma above, and is given by
\be
\label{ImerIrot}
  \Imer(0,0)(q) = \sum_{n \in \BZ} \Irot(n, n)(q) \,.
\ee

\subsection{Holomorphic blocks}
\label{sub.holo}

Consider the 3D-index in the so-called fugacity basis~\cite[Eqn.2.5]{DGG1}:
\be
\label{eq.fuga}
\Ifug(\z,m)(q) = \sum_e I(m,e)(q) \z^e \,.
\ee
Its expression in terms of holomorphic blocks is given below Eqn.6.46
of~\cite{holo-blocks}.
Comparing Equations~\eqref{eq.Irot} and~\eqref{eq.fuga}, and keeping in mind
that $\Ifug(\z,m)$ is a meromorphic function of $\z$ with potential singularities
at $\z \in q^\BZ$, it follows that
\be
\label{eq.rotfuga}
\Irot(n,n')(q) = \lim_{x\to 1}\Ifug(q^{\frac{n+n'}{2}} x,n-n')(q) \,.
\ee
Now, according to~\cite[Eqn.1.3]{holo-blocks} (see
also~\cite[Eqn.6.46]{holo-blocks} for a similar statement for state-integrals),
we have
\be
\label{eq.holoblocks}
\Ifug(\z,m)(q) = \sum_{\a} B^{(\a)}(x;q) B^{(\a)}(\tilde x; \tilde q),
\qquad x=q^{\frac{m}{2}}\z, \quad \tilde x = q^{\frac{m}{2}}\z^{-1}, \quad
\tilde q=q^{-1} \,,
\ee
where $B^{(\a)}(x,q)$ are holomorphic blocks, meromorphic functions of $x$
which are defined both inside and outside the unit $q$-disk, and are
annihilated by the homogeneous $\Ahat$-polynomial. 
Equations~\eqref{eq.rotfuga} and~\eqref{eq.holoblocks} imply that
\be
\label{IrotB}
\Irot(n,n')(q) =
\lim_{x\rightarrow1}\sum_\a B^{(\a)}(q^{-n'}x^{-1};q^{-1})B^{(\a)}(q^{n}x;q)\,.
\ee
This is our starting point for the asymptotics of the 3D-index, which in particular
predicts that the rotated 3D-index satisfies the equation
\be
\label{AJ}
\Ahat(M_+,L_+) \Irot = \Ahat(M_-,L_-) \Irot =0 \,. 
\ee

\subsection{Colored holomorphic blocks}
\label{sub.colored}

The holomorphic blocks $B^{(\a)}(x;q)$ are meromorphic functions of $x$ with
poles at $x \in q^\BZ$, which makes the limit in equation~\eqref{IrotB} difficult
to compute. Instead, we will introduce colored holomorphic blocks $h^{(\a)}(q)$
and express the rotated 3D-index as follows:
\be
\label{IrotH}
\Irot(n,n')(q) = \sum_\a B_{\a,\bar\a} h^{(\a)}_{n}(q) h^{(\bar\a)}_{n'}(q^{-1}) \,,
\ee
where $B=B^t \in \GL_r(\BQ)$ and $\a \mapsto \bar\a$ is an involution corresponding
to the complex conjugation of the set of $\s$. 
The colored holomorphic blocks, defined for $|q| \neq 1$ form a fundamental solution
to the linear $q$-difference equation 
\be
\label{AJ2}
\Ahat(M_+,L_+) H(q) = \Ahat(M_-,L_-) H(q^{-1}) =0 \,,
\ee
which can be defined and computed by applying the Frobenius method. In particular,
this implies that the labeling set of $\a$'s consists of the pairs of edges of the
lower Newton polygon of the $A$-polynomial, together with with a root of the edge
polynomial (the latter known to be product of cyclotomic polynomials).

Going one step further, we can define a $\BZ\times\BZ$ matrix $\Irot(q)$ whose
$(n,n')$ entry
is $\Irot(n,n')(q)$, and a $\BZ\times r$ matrix $H(q)$ whose $(n,\a)$ entry is
$h^{(\a)}_n(q)$, and then write the above equation in the matrix form
\be
\label{IrotH2}
\Irot(q) = H(q) B H(q^{-1})^t \,.
\ee
Equations~\eqref{IrotH2} and~\eqref{ImerIrot} give that
\be
\label{ImerH}
\Imer(0,0)(q) = \mathrm{tr}(H(q) B H(q^{-1})^t) \,.
\ee
The symmetry,
\be
\label{hansym}
h^{(\a)}_n(q) = h^{(\a)}_{-n}(q) \,,
\ee
of the colored holomorphic blocks (for all $\a$ and all integers $n$)
and Equation~\eqref{IrotH2} implies the symmetries~\eqref{Isym} and~\eqref{Ikmsym}
for the rotated 3D-index.

To use equation~\eqref{IrotH2} effectively, we need a method to compute 
the colored holomorphic blocks. We can do so either from the polar decomposition of
the holomorphic blocks when the latter are available (this is the case of the $4_1$
and $5_2$ knots), or by applying the Frobenius method (see,~\cite{Ga:quasi})
to the linear $q$-difference equation that they satisfy when the latter is available
(this is also the case of the $4_1$ and $5_2$ knots). Either way, there
is an ambiguity in the normalisation of the colored holomorphic blocks. 


\section{Asymptotics}
\label{sec.phi}

The previous section expressed the rotated and the meromorphic 3D-index of a knot
bilinearly in terms of a matrix of colored holomorphic blocks--see
equations~\eqref{IrotH2} and~\eqref{ImerH}. 
In this section, we discuss the structure of the horizontal asymptotics of
the colored holomorphic blocks, and consequently of the rotated and of the
meromorphic 3D-index. 

\subsection{A matrix of asymptotic series}
\label{sub.asyholo}

We begin by defining a matrix of asympototic series which ultimately expresses
the asymptotics of the colored holomorphic blocks and of the 3D-index, in all of
its forms. 
Let $H(q)$ and $\wh\Phi(h)$ denote the $\BZ\times r$ matrix with
$H(q)_{n,\a}=h^{(a)}_n(q)$ and $\wh\Phi(h)_{n,\a}=\wh\Phi^{(\a)}_n(h)$.
For an integer $n$, let $H_n(q)$ and $\wh\Phi_n(h)$ denote the $r\times r$
matrices whose rows are the $n$th, $n+1$, \dots $n+r-1$ rows of $H(q)$ and
$\wh\Phi(h)$, respectively. The entries of the matrix $\wh\Phi(h)$ are completed
formal power series of the form given in Equation~\eqref{Phisnh} where $\s$
labels a boundary parabolic $\PSL_2(\BC)$-representation of the knot complement,
and $V_\s=i \mathrm{Vol}(\s)+\mathrm{CS}(\s)$ denotes its complexified
volume whose imaginary part is the volume and the real part is the Chern--Simons
invariant~\cite{Neumann:CS}. The Borel resummation of the completed formal power
series $\wh\Phi$, whose definition we omit, will be denoted by $(s\wh\Phi)(h)$
(for a detailed definition, see~\cite{Sauzin:divergent}). The Borel resummation
depends on a ray in a sector that does not contain Stokes lines. In our case, the
sector is in the upper or lower half-plane and close to the positive real axis,
in which case it is conjectured that there are no Stokes lines in such a
sector~\cite{GGM}, and that Borel resummation is well-defined. 

The relation between the colored holomorphic blocks and the asymptotic series is the
following conjectural identity:
\be
\label{HPhi}
H_n(q) = (s \wh\Phi_n)(2\pi i \tau) M(\tq) H_0(\tq) \diag(\tau^{\kappa_\a}),
\qquad (0 < |\arg(\tau)| < \ve)
\ee
valid for some $\ve>0$ (which depends on the knot), where $M(q) \in \GL_r(\BQ(q))$.
The above equality of exact functions allows us to compute the matrix $\wh\Phi_n(h)$
of asymptotic series in several ways:
\begin{itemize}
\item[(a)]
  via numerical asymptotics of the colored holomorphic blocks as $\tau$ tends to
zero nearly horizontally.
\item[(b)]
  via WKB, using that fact that both matrices $H_n(q)$ and $\wh\Phi_n(h)$ are
fundamental solutions of the same linear $q$-difference equation, namely the
$\Ahat$-equation. In fact, it appears that $\wh\Phi_n(h)$ is the unique discrete
WKB solution to the $\Ahat$ equation with initial condition $\wh\Phi_0(h)$.
\item[(c)]
  via the formulas of~\cite{DG}, where a power series was defined by formal Gaussian
integration using the Neumann--Zagier matrices of an ideal triangulation of a knot
complement and a solution to the Neumann--Zagier equations, whose developing map
is $\a$. In that paper, if we let the solution of the Neumann--Zagier equation vary
infinitesimally around $\a$, we obtain a power series in $h$ whose coefficients
are polynomials in $n$.
\item[(d)]
  via the asymptotics of state-integrals that equal to a suitable matrix-valued cocycle.
\end{itemize}

In the present paper, we will mostly use the WKB ansatz since the
linear $q$-difference equation (i.e., the $\Ahat$-polynomial)
of the $4_1$ and the $5_2$ knots are known. Recall that the WKB ansatz
\be
\label{Ahat.ansatz}
\begin{aligned}
  \Phi_{n}(\hbar)
  &=
  \exp\left(\sum_{\ell=-1}^{\infty}S_{\ell}(n\hbar)\right)=\Phi_{0}(\hbar)
  \exp\left(\sum_{\ell=-1}^{\infty}
    \sum_{k=1}^{\infty}a_{k,\ell}(n\hbar)^k\hbar^{\ell}\right) \,.
\end{aligned}
\ee
substitutes the above power series in the linear $q$-difference equation, and
solves for the coefficients, one degree of $\hbar$ at a time.
The Weyl symmetry $n\leftrightarrow -n$ of the linear $q$-difference equation
implies that we get polynomials in $n^2$, thus $a_{k,\ell}=0$ for all odd $k$. 
The first unknown to be determined is $a_{2,-1}$, which satisfies a polynomial
equation obtained by setting $q=1$ in the linear $q$-difference equation,
$q^n=1+u+O(u^2)$ and replacing the shift operator by $1+a_{2,-1} u + O(u^2)$.
This polynomial equation is identical to the polynomial equation for the
cusp-shape $c$ of the hyperbolic knot, obtained by replacing $M=1+u$, $L=1 + cu$
in the $A$-polynomial of a knot and equating the lower power of $u$ arising to zero.
The coincidence of the polynomial equations for $a_{2,-1}$ and of the cusp shape
is consistent with the AJ-Conjecture which states that the specialisation of $q=1$
to the $\Ahat$-polynomial is the $A$-polynomial. 
Once $a_{2,-1}$ is known, the remaining coefficients $a_{k,\ell}$ for $k \geq 2$
are obtained from the WKB ansatz as a solution to a block triangular linear system.

\subsection{Horizontal asymptotics of the rotated 3D-index}
\label{sub.asyrot}

Equation~\eqref{HPhi}, together with Equation~\eqref{IrotH2} implies that
\be
\label{IrotH3}
\Irot(q) = (s \wh\Phi)(2\pi i \tau) M(\tq) \Irot_0(\tq) M(\tq^{-1}) 
(s \wh\Phi)^*(-2\pi i \tau) \,,
\ee
where $*$ means the conjugate transpose. 
The above is an exact identity that gives the $q \to 1$ asymptotics all the $q$-series
$\Irot(n,n')(q)$ to all orders in $\tau$ and with all exponential terms $\tq$ included.
Ignoring the $\tq$ terms, and assuming that $\tau \to 0$ in a fixed ray with
$0 < |\arg(\tau)| < \ve$, we obtain that
\be
\label{IrotH4}
\Irot(q) \sim \wh\Phi (2\pi i \tau) \diag(M_0) (\wh\Phi)^*(-2\pi i \tau) \,,
\ee
for a matrix $M_0 \in \GL_r(\BQ)$. Even further, when $\tau \to 0$ in a ray
$\arg(\tau)=\th_0$ with $\th_0$ positive and sufficiently small, this implies
the asymptotic expansion of Equation~\eqref{main1c}.

\subsection{Horizontal asymptotics and Gaussian integration}
\label{sub.asymer}

Equations~\eqref{ImerH} and~\eqref{main1c} imply that when $\tau \to 0$ in a
ray $\arg(\tau)=\th_0$ with $\th_0$ positive and sufficiently small,
Equation~\eqref{main1a} holds. In this section, we explain how to expand the sums over
the integers of the bilinear asymptotic series in~\eqref{main1a}, as power series
in $\hbar$. 

We will use the form~\eqref{Ahat.ansatz} for the two asymptotic series 
$\Phi_{n}^{(\s)}(\hbar)$ and $\Phi_{n}^{(\s')}(\hbar)$, and abbreviate 
$\b_{k,\ell}=a_{k,\ell}^{(\s)}+(-1)^{k+\ell}a_{k,\ell}^{(\s')}$ and 
$\b=2\b_{2,-1}$. Of course, $\b$ and $\b_{k,\ell}$ depend on $(\s,\s')$. Then,
we have
\be
\label{Phi2}
\begin{aligned}
\Phi_{n}^{(\s)}(\hbar)\Phi_{n}^{(\s')}(-\hbar)
&=
  \exp\left(\sum_{\ell=-1}^{\infty}
    \sum_{k=1}^{\infty}(a_{k,\ell}^{(\s)}+(-1)^{k+\ell}
    a_{k,\ell}^{(\s')})(n\hbar)^k\hbar^{\ell}\right)\\
  &=
  \Phi_{0}^{(\s)}(\hbar)\Phi_{0}^{(\s')}(-\hbar) e^{\frac{1}{2}\b n^2\hbar}
  \exp\left(\sum_{\ell=-1}^{\infty}
    \sum_{k=1}^{\infty} \b_{k,\ell} (n\hbar)^k\hbar^{\ell}\right) \\
  &= \Phi_{0}^{(\s)}(\hbar)\Phi_{0}^{(\s')}(-\hbar) e^{\frac{1}{2}\b n^2\hbar}
  I(n,\hbar)\,,
\end{aligned}
\ee
where $I(n,\hbar)=\exp\left(\sum_{\ell=-1}^{\infty}
  \sum_{k=1}^{\infty} \b_{k,\ell} (n\hbar)^k\hbar^{\ell}\right)$. 
The coefficient of $\hbar^k$ in $I(n,\hbar)$ 
is a polynomial in $n$ of degree $4k/3$ (resp., $4k/3-4/3$, $4k/3-2/3$) if
$k=0 \bmod 3$ (resp., $k=1 \bmod 3$, $k=2 \bmod 3$), with the first few terms given
by
\be
\label{Inh}
\begin{aligned}
  I(n,\hbar) =&\; 1 + n^2 \b_{2, 0} \hbar^2 + (n^2 \b_{2, 1} + n^4 \b_{4, -1}) \hbar^3
  + (\frac{1}{2} n^4 \b_{2, 0}^2 + n^2 \b_{2, 2} + n^4 \b_{4, 0} ) \hbar^4 \\ & +
  (n^4 \b_{2, 0} \b_{2, 1} + n^2 \b_{2, 3} + n^6 \b_{2, 0} \b_{4, -1} + 
  n^4 \b_{4, 1} + n^6 \b_{6, -1}) \hbar^5 \\ & +
  (\frac{1}{6} n^6 \b_{2, 0}^3 + \frac{1}{2} n^4 \b_{2, 1}^2
  + n^4 \b_{2, 0} \b_{2, 2} + n^2 \b_{2, 4} + n^6 \b_{2, 1} \b_{4, -1} \\ &
  + \frac{1}{2} n^8 \b_{4, -1}^2 + 
 n^6 \b_{2, 0} \b_{4, 0} + n^4 \b_{4, 2} + n^6 \b_{6, 0}) \hbar^6 + O(\hbar^7) \,.
\end{aligned}
\ee
To sum over $n \in \BZ$, we can use either Poisson summation, or the modular
transformation property
\be
\label{thetaS}
\Theta(\e(\tau))
=
\z_8\tau^{-1/2} \,
\Theta(\e(-1/\tau))\,,
\ee
of the theta function
\be
\label{thetaf}
\Theta(q)=\sum_{n\in\BZ}q^{n^2/2} \,,
\ee
where $\z_8=\e(1/8)$ so that $(\z_8)^8=1$. Differentiating
Equation~\eqref{thetaS}, we find that
\be
  \sum_{n\in\BZ}n^{2k}\e\left(\frac{1}{2}n^2\tau\right) \in\z_8
\sum_{j=0}^{k-1}\frac{1}{\tau^{k+j+1/2}}
\BQ\left[\left[\e\left(-\frac{1}{2\tau}\right)\right]\right]
\ee
is polynomial in $\tau^{-1/2}$ with coefficients a power series of $\e(-1/\tau)$.
When $\tau\sim 0$, ignoring all the powers $\e(-1/\tau)$, we obtain that
$\sum_{n\in\BZ}n^{k}\e\left(\frac{1}{2}n^2\tau\right)
\sim \langle n^k \rangle_{2\pi i\tau}$ to all orders in $\tau$, where
\be
\label{bracket}
\langle n^{2k} \rangle_{2\pi i\tau} = 
\frac{(-1)^k\z_8(2k-1)!!}{(2\pi i)^k\tau^{(2k+1)/2}}, \qquad 
\langle n^{2k+1} \rangle_{2\pi i\tau} =0
\ee
and $(2k-1)!!=(2k)!/(k! 2^k)$ for $k \geq 0$. The above equation can also
be written in terms of a formal Gaussian integration as follows:
\be
\label{gaussian}
  \langle n^{2k} \rangle_{2\pi i\tau}
  =
  \int_{\BR}u^{2k}\e\left(\frac{1}{2}u^2\tau\right)du \,.
\ee

Using the above asymptotics with
$2\pi i\tau=\b\hbar$ and Equation~\eqref{Inh} we find that
\be
\label{sumPhi}
\begin{aligned}
  \sum_{n\in\BZ}\Phi_{n}^{(\s)}(\hbar)\Phi_{n}^{(\s')}(-\hbar)
  &=
  \Phi_{0}^{(\s)}(\hbar)\Phi_{0}^{(\s')}(-\hbar)\;
  \langle 1\rangle_{\b\hbar}
  \times\Big(1+\big(-\frac{\b_{2,0}}{\b} +\frac{3 \b_{4,-1}}{\b^2} \big) \hbar
    \Big. \\
    & \hspace{-2cm} \left.
      + \big( 3 \frac{\b_{2, 0}^2}{2 \b^2} - \frac{\b_{2, 1}}{\b}
      - 15 \frac{\b_{2, 0} \b_{4, -1}}{\b^3}
      + 105 \frac{\b_{4, -1}^2}{2 \b^4} + 3 \frac{\b_{4, 0}}{\b^2}
      - 15 \frac{\b_{6, -1}}{\b^3} \big) \hbar^2 + O(\hbar^3)\right) \,.
\end{aligned}
\ee
From the bounds of the $n$-degree of each coefficient of $\hbar^k$ in $I(n,\hbar)$,
it follows that the coefficient of $\hbar^k$ in the sum over $n$ depends only on
$I(n,\hbar)+O(\hbar^{3k+1})$.


\section{Further discussion}
\label{sec.further}

In this section, we discuss briefly three further aspects of the 3D-index: its
relation with Stokes matrices, following~\cite{GGM, GGM:peacock}, its relation
to stability, following~\cite{GL:nahm}, and its possible relation with topological
recursion~\cite{BE:all-order}.

\subsection{The 3D-index and Stokes matrices}
\label{sub.stokes}

A discovery of~\cite{GGM, GGM:peacock} was that the rotated 3D-index essentially
determines the Stokes constants of the Borel resummation of the asymptotic series
that appear in complex Chern--Simons theory. For a precise statement, see
Conjecture 4 of~\cite{GGM:peacock}. A further extension that includes the so-called
trivial flat connection was proposed in~\cite{GGMW:trivial}. 
An explanation of this relation between the BPS counts of the 3D-index and the
Stokes constants of perturbative power series from first principles is currently 
missing. Without doubt, the Stokes constants of the Borel resummation of the matrix
$\Phi_n(h)$ of factorially divergent $h$-power series can be expressed in terms of
the rotated 3D-index, in a manner analogous to the results of~\cite{GGM:peacock},
only that the descendant variable $m$ there is replaced by the variable $n$ here.
We will not pursue this numerical investigation further. 

\subsection{Stability}
\label{sub.stability}

In this short subsection, which is independent of our asymptotic results, we make
some comments regarding the $q=0$ expansion of the colored holomorphic blocks and
their relation with the 3D-index. Whereas the 3D-index is a well-defined topological
invariant, computable from the Neumann--Zagier matrices of an ideal triangulation
of a 3-manifold, the $\BZ \times r$ matrix $H(q)$ of the colored holomorphic blocks
has not been defined in terms of an ideal triangulation, nor is a priori a topological
invariant. Yet, as we will see in examples, the matrix of colored holomorphic blocks
satisfies some stability properties and this makes possible to recover it from the
matrix of the rotated 3D-index.

Perhaps stability properties of $\Irot(n,n')(q)$ as $n$ or $n'$ are large allow
one to determine the colored holomorphic blocks $H(q)$ (in a suitable normalisation)
from the rotated 3D-index.
If so, it would give a definition of the colored holomorphic blocks in terms of
Neumann--Zagier matrices as well as conclude their topological invariance,
something which is currently missing. This works for $4_1$ (whose details we
give) and $5_2$ (whose details we omit). Unfortunately, this hope has not been
realised so far for all knots.

\subsection{Periods from Topological Recursion?}
\label{sub.TR}

We have seen in the introduction that periods of the $A$-polynomial curve
appear in the vertical asymptotics of the meromorphic 3D-index (but not of the
rotated, or the original 3D-index). On the other hand, the rotated 3D-index
satisfies (in two variables) a linear $q$-difference equation whose $q=1$ limit
is conjectured to be the $A$-polynomial of~\cite{CCGLS}. 

Since periods are nonperturbative (and classical) information of the $A$-polynomial
curve, is there an explanation of their appearance in the context of
topological recursion? Note that the latter can explain the asymptotics of the
Kashaev invariant of a knot~\cite{BE:all-order}.


\section{The 3D-index of the $4_1$ knot}
\label{sec.41}


In this section, we give a detailed and computable description of the 3D-index
of the $4_1$ knot and of its asymptotics, illustrating the conjectures of
Section~\ref{sub.conj}. All quantities (functions and numbers) in this section
refer to the $4_1$ knot, which will be omitted from the notation.

\subsection{Holomorphic blocks}

The $4_1$ knots has two colored holomorphic blocks $B^{(1)}(x;q)$ and $B^{(2)}(x;q)$
are given by (see~\cite[Eqn.6.45]{holo-blocks} and~\cite[Sec.5.2]{GGM:peacock})
\be
\begin{aligned}
B^{(1)}(x;q) &=
\frac{(qx^2;q)_{\infty}}{\theta(x;q)\theta(-q^{1/2}x;q)}
\sum_{k=0}^{\infty}(-1)^{k}\frac{q^{k(k+1)/2}x^k}{(q;q)_{k}(qx^2;q)_{k}}\,,\\
B^{(2)}(x;q) &= B^{(1)}(x^{-1};q)\,,\\
B^{(1)}(x;q^{-1}) &=
\frac{\theta(x;q)\theta(-q^{-1/2}x;q)}{(x^2;q)_{\infty}}
\sum_{k=0}^{\infty}(-1)^{k}\frac{q^{k(k+1)/2}x^{-k}}{(q;q)_{k}(qx^{-2};q)_{k}}\,,\\
B^{(2)}(x;q^{-1}) &= B^{(1)}(x^{-1};q^{-1})\,,
\end{aligned}
\ee
where
\be
\label{theta}
  \theta(x;q) =(-q^{1/2}x;q)_{\infty}(-q^{1/2}x^{-1};q)_{\infty}\,.
\ee
Since the two holomorphic blocks are given by 1-dimensional proper $q$-hypergeometric
sums, it follows by Wilf-Zeilberger theory~\cite{WZ}, implemented by Koutschan
in~\cite{Koutschan}, that they both satisfy the second order linear $q$-difference
equation
\be
\label{41.Ahat}
\begin{aligned}
P_{0}(x,q)B^{(\a)}(x;q)+P_{1}(x,q)B^{(\a)}(qx;q)+P_{2}(x,q)B^{(\a)}(q^2x;q)&=0\,,
\end{aligned}
\ee
for $\a=1,2$, where
\be
\begin{aligned}
  P_{0}(x,q)&=q^{2}x^2(q^{3}x^2-1)\,,\\
  P_{1}(x,q)&=-q^{1/2}(1-q^{2}x^2)(1-qx-qx^2-q^{3}x^2-q^{3}x^3+q^{4}x^4)\,,\\
  P_{2}(x,q)&=q^{3}x^2(-1+qx^2) \,.
\end{aligned}
\ee
Our normalisation of the above $q$-difference equation differs slightly
from~\cite[Eqn.(140)]{GGM:peacock}. Our choice was dictated by the facts that it
annihilates in two ways the rotated 3D-index, the latter being canonically normalised,
and by the fact that it respects the $\BZ/2\BZ$ Weyl symmetry,
(i.e., it is invariant under the map $x \mapsto 1/x$), which when $x=q^n$,
($n$ being the weight of an $\mathfrak{sl}_2(\BC)$ representation), means invariance
under the map $n \mapsto -n$. 

\subsection{Colored holomorphic blocks}
\label{sub.41h}


The holomorphic blocks are meromorphic functions of $(x,q)$. We now define
the colored holomorphic blocks by looking at their expansions at $x=q^n e^u$ at
$u=0$. Explicitly, we define the colored holomorphic blocks $\hb^{(0)}_{n}(q)$
and $\hb^{(1)}_{n}(q)$ for $|q|<1$ by

\be
\label{41h}
\begin{aligned}
  B^{(1)}(q^ne^u;q) &=\frac{1}{(q;q)_{\infty}(-q^{1/2};q)_{\infty}^{2}}
  \hb^{(0)}_n(q)u^{-1}+O(u^0)\,,\\
  B^{(1)}(q^ne^u;q)+B^{(2)}(q^ne^u;q)&=
  \frac{1}{(q;q)_{\infty}(-q^{1/2};q)_{\infty}^{2}}\hb^{(1)}_{n}(q)+O(u^1) \,,
\end{aligned}  
\ee
and for $|q|<1$ by
\be
\label{41hb}
\begin{aligned}
  B^{(1)}(q^{-n}e^{-u};q^{-1})&=
  \frac{(q;q)_{\infty}(-q^{1/2};q)_{\infty}^{2}}{2}\hb^{(0)}_n(q^{-1})+O(u^1)\,, \\
  B^{(1)}(q^{-n}e^{-u};q^{-1})-B^{(2)}(q^{-n}e^{-u};q^{-1}) &=
  \frac{(q;q)_{\infty}(-q^{1/2};q)_{\infty}^{2}}{2}\hb^{(1)}_{n}(q^{-1})u+O(u^2)\,.
\end{aligned}
\ee
Since the holomorphic blocks $B^{(\a)}(x;q)$ satisfy the $q$-difference
equation~\eqref{41.Ahat}, it follows that the colored holomorphic blocks, defined
in~\eqref{41h} and~\eqref{41hb} satisfy the linear $q$-difference equation
\be
\label{41.Ahath}
P_{0}(q^n,q)\hb^{(\a)}_n(q)+P_{1}(q^n,q)\hb^{(\a)}_{n+1}(q)
+P_{2}(q^n,q)\hb^{(\a)}_{n+2}(q)=0 \qquad (\a=0,1, \, \, n \in \BZ) \,.
\ee

Next, we give $q$-hypergeometric formulas for the colored holomorphic blocks
for $|q| \neq 1$. To do so, we use the next lemma which can be deduced
from~\cite[Prop.2.2]{GK:qseries} or from~\cite{GZ:qseries}.
For a positive integer $\ell$, consider the function
\be
\label{Eell}
\begin{aligned}
  E_{\ell}(q)
  =
  \frac{\zeta(1-\ell)}{2}+\sum_{s=1}^{\infty}s^{\ell-1}\frac{q^{s}}{1-q^s}\,,
\end{aligned}
\ee
(where $\zeta(s)$ is the Riemann zeta function), analytic for $|q|<1$ and
extended to $|q|>1$ by the symmetry $E_{\ell}(q^{-1}) = -E_{\ell}(q)$. 

\begin{lemma}[\cite{GK:qseries,GZ:qseries}]
\label{lem.epexpansions}
We have:  
\be
\label{epasy}
\begin{small}
\begin{aligned}
\frac{(qe^{\ve};q)_{\infty}}{(q;q)_{\infty}}\frac{1}{(qe^{\ve};q)_{m}}=
&\frac{1}{(q;q)_{m}}\sqrt{\frac{-\ve}{1-\exp(\ve)}}
\exp\left(-\sum_{\ell=1}^{\infty}
  \left(E_{\ell}(q)-\sum_{n=1}^{m}\Li_{1-\ell}(q^{n})\right)
  \frac{\ve^\ell}{\ell!}\right)\,, \\
\frac{(q^{-1}e^{\ve};q^{-1})_{\infty}}{(q^{-1};q^{-1})_{\infty}}
\frac{1}{(q^{-1}e^{\ve};q^{-1})_{m}}=
&
\frac{1}{(q^{-1};q^{-1})_{m}}\frac{-1}{\ve}\sqrt{\frac{-\ve}{1-\exp(\ve)}} \\
& \exp\left(-\sum_{\ell=1}^{\infty}
  \left(E_{\ell}(q^{-1})-\sum_{n=1}^{m}\Li_{1-\ell}(q^{-n})\right)
  \frac{\ve^{\ell}}{\ell!}\right) \,.
\end{aligned}
\end{small}
\ee
\end{lemma}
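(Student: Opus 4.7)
The plan is to verify both identities by taking logarithms and expanding every factor as a formal power series in $\ve$, then matching term by term against the series definitions of $E_\ell(q)$ and $\mathrm{Li}_{1-\ell}(q^n)$. This is a Stirling-type expansion in which the square-root prefactor will emerge from the classical $\log$ of the Bernoulli generating function.

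For the first formula I would rewrite the ratio as
\[
\frac{(qe^\ve;q)_\infty}{(q;q)_\infty(qe^\ve;q)_m}
\= \frac{1}{(q;q)_m}\prod_{n=1}^\infty \frac{1-q^n e^\ve}{1-q^n}
\cdot\prod_{n=1}^m \frac{1-q^n}{1-q^n e^\ve},
\]
take logarithms, and use the absolutely convergent expansion
\[
\log(1-q^n e^\ve)-\log(1-q^n) \= -\sum_{k\ge 1}\frac{q^{nk}(e^{k\ve}-1)}{k}
\= -\sum_{\ell\ge 1}\frac{\ve^\ell}{\ell!}\,\mathrm{Li}_{1-\ell}(q^n),
\]
where the last step swaps the $k$ and $\ell$ sums and invokes $\mathrm{Li}_{1-\ell}(q^n)=\sum_{k\ge 1}k^{\ell-1}q^{nk}$. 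Summing over $n\ge 1$ by Fubini (justified by absolute convergence for $|q|<1$ and $|\ve|$ small) gives $\sum_{n\ge 1}\mathrm{Li}_{1-\ell}(q^n)=\sum_{k\ge 1}k^{\ell-1}q^k/(1-q^k)=E_\ell(q)-\zeta(1-\ell)/2$. Subtracting the truncated tail coming from the $(qe^\ve;q)_m$ factor then produces the bracket $E_\ell(q)-\sum_{n=1}^m\mathrm{Li}_{1-\ell}(q^n)$ of the statement, plus an anomalous residue $\frac{1}{2}\sum_{\ell\ge 1}\zeta(1-\ell)\ve^\ell/\ell!$.

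To identify this residue with the claimed square-root prefactor I would invoke the classical formal-series identity
\[
\sum_{\ell\ge 1}\zeta(1-\ell)\frac{\ve^\ell}{\ell!} \= \log\frac{\ve}{e^\ve-1},
\]
which follows by taking $\log$ of the Bernoulli generating function $\ve/(e^\ve-1)=\sum_{n\ge 0}B_n\ve^n/n!$ together with $\zeta(1-\ell)=-B_\ell/\ell$ (odd $\ell\ge 3$ contribute nothing since both sides vanish there). Exponentiating and rearranging then yields the first identity. For the second formula the same logarithmic expansion applies in the regime $|q^{-1}|<1$, with $E_\ell(q^{-1})$ playing the role of $E_\ell(q)$ via its own defining power series; the extra prefactor $-1/\ve$ arises from the Pochhammer boundary contribution that vanishes linearly at $\ve=0$, extracted via $1-e^\ve=-\ve(1+O(\ve))$ with the $O(\ve)$ tail reabsorbed into the exponentiated sum. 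The main obstacle is precisely this bookkeeping in the $|q|>1$ case: ensuring that the simple pole, the square-root factor, and the $\zeta$-versus-$E_\ell$ cancellations all line up exactly. Once that is settled, the remainder is elementary manipulation of convergent power series.
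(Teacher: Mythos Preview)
The paper does not supply its own proof of this lemma; it is simply quoted from \cite{GK:qseries,GZ:qseries}. Your approach is therefore not competing with anything in the paper, and for the first identity it is the standard and correct argument: the logarithmic expansion
\[
\log\frac{1-q^ne^\ve}{1-q^n}=-\sum_{\ell\ge 1}\frac{\ve^\ell}{\ell!}\,\mathrm{Li}_{1-\ell}(q^n),
\qquad
\sum_{n\ge 1}\mathrm{Li}_{1-\ell}(q^n)=E_\ell(q)-\tfrac{1}{2}\zeta(1-\ell),
\]
together with the Bernoulli identity $\sum_{\ell\ge 1}\zeta(1-\ell)\ve^\ell/\ell!=\log\bigl(\ve/(e^\ve-1)\bigr)$, yields the first formula immediately.

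The second identity is a genuinely different matter, and your proposal does not resolve it. You say that ``the same logarithmic expansion applies in the regime $|q^{-1}|<1$'', but substituting $q\mapsto q^{-1}$ in the first identity (for $|q^{-1}|<1$) reproduces the second formula \emph{without} the extra factor $-1/\ve$. So the pole cannot come from that substitution. The factor $-1/\ve$ reflects the specific convention, used in \cite{GK:qseries,GZ:qseries}, for the symbol $(a;q^{-1})_\infty$ when $|q|<1$ (equivalently, for how the divergent ratio $(q^{-1}e^\ve;q^{-1})_\infty/(q^{-1};q^{-1})_\infty$ is regularised). Your sentence about ``the Pochhammer boundary contribution that vanishes linearly at $\ve=0$'' gestures at this but does not identify which factor produces it or under which convention. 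To complete the argument you need to state that convention explicitly (in those references it amounts to setting $(x;q^{-1})_\infty:=1/(qx;q)_\infty$, which solves the same $q$-difference equation), and then track the single factor $(1-e^\ve)^{-1}$ that appears when one rewrites $(q^{-1}e^\ve;q^{-1})_\infty$ in terms of base-$q$ Pochhammers; that factor is precisely $-\ve^{-1}\sqrt{-\ve/(1-e^\ve)}\cdot\sqrt{-\ve/(1-e^\ve)}^{\,-1}\cdot(1+O(\ve))$, and combining it with the first-formula expansion gives the claimed right-hand side with $E_\ell(q^{-1})=-E_\ell(q)$. Until this bookkeeping is written out, the second half of your proposal is a sketch rather than a proof.
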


\begin{proposition}
  \label{prop.41h}
  We have: 
\be
\label{41h0}
\hb_{n}^{(0)}(q) = (-1)^nq^{|n|(2|n|+1)/2}
\sum_{k=0}^{\infty}(-1)^{k}\frac{q^{k(k+1)/2+|n|k}}{(q;q)_{k}(q;q)_{k+2|n|}}\,,
\ee
and
\be
\label{41h1}
\begin{small}
\begin{aligned}
\hb^{(1)}_{n}(q)
&=
(-1)^n q^{|n|(2|n|+1)/2}\sum_{k=0}^{\infty}\left(-4E_{1}(q)+
  \sum_{\ell=1}^{k+2|n|}\frac{1+q^\ell}{1-q^\ell}
  +\sum_{\ell=1}^{k}\frac{1+q^\ell}{1-q^\ell}\right)
(-1)^k\frac{q^{k(k+1)/2+|n|k}}{(q;q)_k(q,q)_{k+2|n|}}\\
&\quad-2(-1)^n q^{|n|(2|n|-1)/2}\sum_{k=0}^{2|n|-1}(-1)^k\frac{q^{k(k+1)/2-|n|k}
(q^{-1},q^{-1})_{2|n|-1-k}}{(q;q)_{k}}\,,
\end{aligned}
\end{small}
\ee
for $|q| \neq 1$.
The colored holomorphic blocks satisfy the symmetries
\be
\label{41hsym1}
\hb_{n}^{(0)}(q^{-1})=\hb_{n}^{(0)}(q), \qquad \hb_{n}^{(1)}(q^{-1})=-\hb_{n}^{(1)}(q) \,,
\ee
and
\be
\label{41hsym2}
\hb_{-n}^{(\a)}(q) = \hb_{n}^{(\a)}(q), \qquad \a=0,1 \,. 
\ee
\end{proposition}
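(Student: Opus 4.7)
The plan is to substitute the explicit $q$-hypergeometric formulas for $B^{(1)}(x;q)$ and $B^{(2)}(x;q)$ into the defining relations \eqref{41h} and \eqref{41hb} at $x = q^n e^u$, compute the Laurent expansion in $u$ around $u=0$, and match coefficients. A useful preliminary identity is
\begin{equation*}
\theta(-q^{1/2}x;q) = (qx;q)_\infty (x^{-1};q)_\infty,
\end{equation*}
immediate from the definition \eqref{theta}. This localises the simple pole of $B^{(1)}(q^n e^u;q)$: for $n \geq 0$, the factor $(q^{-n}e^{-u};q)_\infty$ contains $1-e^{-u}$, which vanishes linearly at $u=0$.

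To derive \eqref{41h0}, I would isolate the vanishing factor as $(q^{-n}e^{-u};q)_\infty = (q^{-n};q)_n (1-e^{-u})(qe^{-u};q)_\infty + O(u^2)$, apply $(q^{-n};q)_n = (-1)^n q^{-n(n+1)/2}(q;q)_n$, and evaluate the remaining parts of the prefactor at $u = 0$ using $\theta(q^n;q) = q^{-n^2/2}(-q^{1/2};q)_\infty^2$ together with the identities $(q^{n+1};q)_\infty = (q;q)_\infty/(q;q)_n$ and $(q^{2n+1};q)_\infty = (q;q)_\infty/(q;q)_{2n}$. The $q$-hypergeometric sum evaluated at $u = 0$, rewritten via $(q^{2n+1};q)_k = (q;q)_{2n+k}/(q;q)_{2n}$, then combines to give the residue, which after dividing by the normalisation $1/((q;q)_\infty (-q^{1/2};q)_\infty^2)$ matches \eqref{41h0} for $n \geq 0$.

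For \eqref{41h1} I would extract the $u^0$-coefficient of $B^{(1)}(q^n e^u;q) + B^{(2)}(q^n e^u;q) = B^{(1)}(q^n e^u;q) + B^{(1)}(q^{-n}e^{-u};q)$. The pole parts of the two summands cancel (confirming regularity), and the surviving constant term requires expanding both the prefactor and the $q$-hypergeometric sum to order $u$. Lemma \ref{lem.epexpansions} is the main tool here: it expresses logarithmic $u$-derivatives of $(qe^u;q)_\infty/(qe^u;q)_m$ in terms of $E_\ell(q)$, producing the combination $-4E_1(q) + \sum (1+q^\ell)/(1-q^\ell)$ in the first line of \eqref{41h1}. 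The finite second sum, involving $(q^{-1};q^{-1})_{2|n|-1-k}$, arises from expanding the finite product $\prod_{k=0}^{n-1}(1-q^{-n+k}e^{-u})$ to order $u$ and rewriting the resulting negative-index Pochhammer combinations via $(q^{-1};q^{-1})_m = (-1)^m q^{-m(m+1)/2}(q;q)_m$. The $|q|>1$ versions \eqref{41hb} are treated identically using the second identity of Lemma \ref{lem.epexpansions}; the sign flip $\hb^{(1)}_n(q^{-1}) = -\hb^{(1)}_n(q)$ in \eqref{41hsym1} then comes from the $-1/\ve$ factor there combined with $E_\ell(q^{-1}) = -E_\ell(q)$, while $\hb^{(0)}_n(q^{-1}) = \hb^{(0)}_n(q)$ is manifest from the resulting formula. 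Finally \eqref{41hsym2} follows by applying $B^{(2)}(x;q) = B^{(1)}(x^{-1};q)$ to \eqref{41h} with $(n,u) \mapsto (-n,-u)$: the left-hand sides are unchanged, forcing $\hb^{(\alpha)}_{-n} = \hb^{(\alpha)}_n$.

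The main obstacle is the bookkeeping in the constant-term extraction for $\hb^{(1)}$. The delicate pole cancellation requires expanding the prefactor and the $q$-hypergeometric sum consistently to order $u$, correctly accounting for every $E_1(q)$ contribution from each $q$-Pochhammer derivative, and converting the finite boundary piece coming from the $q$-Pochhammer tail into the clean closed form on the second line of \eqref{41h1}. The precise prefactor $-2(-1)^n q^{|n|(2|n|-1)/2}$ and the denominator $(q;q)_k$ in that second sum emerge only after a careful reassembly using the $(q^{-1};q^{-1})_m$ rewrite, and this matching is the most technical part of the proof.
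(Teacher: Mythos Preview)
Your proposal is correct and follows essentially the same approach as the paper: substitute the explicit $q$-hypergeometric formulas for the holomorphic blocks into the defining relations~\eqref{41h}--\eqref{41hb}, expand around $u=0$ using Lemma~\ref{lem.epexpansions}, and collect terms. The paper's own proof is more terse, recording only three sample expansions (of $1/\theta(q^n e^u;q)$, of $(q^{2n+1}x^2;q)_\infty/(q^{2n+1}x^2;q)_k$, and of $(q^{-2n+1}x^{-2};q)_\infty/(q^{-2n+1}x^{-2};q)_k$ for $k<2n$), but your more detailed bookkeeping is exactly what ``one simply collects terms'' entails.
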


\begin{proof}
The proof of the above formulas~\eqref{41h0} and~\eqref{41h1} follows from the
definitions of the colored holomorphic blocks combined with the expansions
of Lemma~\ref{lem.epexpansions} above. One simply collects terms associated to
each piece of the sum to deduce the proposition. For example,
\be
  \frac{1}{\theta(q^{n}e^{u};q)}
  =
  \frac{q^{n^{2}/2}}{(-q^{1/2};q)_{\infty}^2}(1+nu+O(u^2))\,,
\ee
or when $n\geq0$,
\be
\frac{(q^{2n+1}x^2;q)_{\infty}}{(q^{2n+1}x^2;q)_{k}}
=\frac{(q;q)_{\infty}}{(q;q)_{k+2n}}\left(1-\left(\frac{1}{2}+2E_{1}(q)
+2\sum_{\ell=1}^{k+2n} \frac{q^{\ell}}{1-q^\ell}\right)u+O(u^2)\right)\,,
\ee
or when $k-2n<0$,
\be
\frac{(q^{-2n+1}x^{-2};q)_{\infty}}{(q^{-2n+1}x^{-2};q)_{k}}
= -2(q^{-1};q^{-1})_{2n-k-1}u+O(u^2)\,.
\ee
The $q \leftrightarrow q^{-1}$ symmetry of $\hb_{n}^{(0)}(q)$ follows from the
definition and the symmetry of its summand itself and similarly for $h^{(1)}(q)$ using 
the first few coefficients in the $u$ expansion, where we also use the definition of
$E_{1}(q^{-1})$.
\end{proof}

The colored holomorphic blocks when $n=0$ agree with the pair of $q$-series
in~\cite{GZ:qseries}:
\be
\label{41gG}
\hb_{0}^{(0)}(q)=g(q), \qquad \hb_{0}^{(1)}(q)=G(q) \,.
\ee
This is manifest from the definitions of these $q$-series.
The relation to the descendant $q$-series of~\cite[Eqn.(13)]{GGM} is given by
\be
\begin{aligned}
  g^{(i)}_0(q)&=\hb^{(i)}_{0}(q)\,,\\
  (1-q)g^{(i)}_1(q)&=\hb^{(i)}_{0}(q)-q^{1/2}\hb^{(i)}_{1}(q)\,.
\end{aligned}
\ee
This follows from the relation for the two Wronskians~\cite[Eqn.(156)]{GGM:peacock}
noting the $-q^{1/2}$ difference as seen in Equation~\eqref{41.Ahat}. Note that the
colored Jones polynomial of the $4_1$ knot is given
\be
\label{41cj}
\begin{aligned}
  J_{n}(q)
  &=
  \sum_{k=0}^{n-1}q^{-kn}(q^{n-1};q^{-1})_{k}(q^{n+1};q)_{k}\\
  &=
  \frac{1}{1-q^n}\sum_{k=-n}^{-1}q^{nk+n}\frac{(q;q)_{n-k-1}}{(q;q)_{k+n}}\\
  &=
  \frac{1}{1-q^n}\sum_{k=1}^{n}q^{n-nk}\frac{(q;q)_{n+k-1}}{(q;q)_{n-k}}\,.
\end{aligned}
\ee
The last formula appears up to some simple factor in the expression for
$\hb^{(1)}_{n}(q)$. The first few values of the colored holomorphic blocks
are given by
\be
\begin{aligned}
\hb^{(0)}_{0}(q) &= 1 - 2 - 2q^2 - 2q^3 - 2q^4 + q^6 + 5q^7 + 7q^8 + 11q^9 + \dots \,, \\
\hb^{(0)}_{1}(q) &= q^{1/2}(-q - q^2 - q^3 + q^5 + 3q^6 + 5q^7 + 7q^8 + 9q^9+\dots) \,,\\
\hb^{(0)}_{2}(q) &= q^5 + q^6 + 2q^7 + 2q^8 + 3q^9 + \dots\,,
\end{aligned}
\ee
and
\be
\begin{aligned}
  \hb^{(1)}_{0}(q) &= 1 - 7q - 14q^2 - 8q^3 - 2q^4 + 30q^5 + 43q^6 + 95q^7
  + 109q^8 + 137q^9 + \dots\,,\\
  \hb^{(1)}_{1}(q) &= q^{1/2}(-2q^{-1} - 5q - 3q^2 + 3q^3 + 16q^4 + 33q^5 + 51q^6
  + 73q^7 + 77q^8 + 81q^9 +\dots)\,,\\
  \hb^{(1)}_{2}(q) &= 2q^{-3} - 2q^{-2} + 2q + 2q^2 + 2q^3 + 2q^4 + 11q^5 + 9q^6
  + 14q^7 + 6q^8 + 3q^9 + \dots\,.
\end{aligned}
\ee
The colored holomorphic blocks have the $q$-degree (i.e., minimum power of $q$)
\be
\label{41deg}
\deg_q \hb^{(0)}_{n}(q)=n^2 + \frac{1}{2}|n|,
\qquad \hb^{(1)}_{n}(q)=-n^2 + \frac{1}{2}|n| \,.
\ee
The above statement for the degree follows easily from Equations~\eqref{41h0}
and~\eqref{41h1}. Alternatively, observe that the two colored holomorphic blocks
are fundamental solutions of the linear $q$-difference equation~\eqref{41.Ahath}
obtained by the Frobenius (or the WKB) method (one power of $q^n$ at a time,
as explained in~\cite{Ga:quasi}), and their degrees are quadratic polynomials
whose slopes are $4$ and $-4$, those of the Newton polygon of the $A$-polynomial. 
  
\subsection{The rotated and the meromorphic 3D-index}
\label{cub.41rotmer}

We next express the rotated 3D-index in terms of the colored holomorphic blocks.

\begin{proposition}
  \label{prop.41Irot}
  For all integers $n$ and $n'$ we have:
  \be
  \label{41.irot}
  \Irot(n,n')(q)
  =
  -\frac{1}{2}\hb_{n'}^{(1)}(q^{-1})\hb_{n}^{(0)}(q)
  +\frac{1}{2}\hb_{n'}^{(0)}(q^{-1})\hb_{n}^{(1)}(q)\,.
\ee
\end{proposition}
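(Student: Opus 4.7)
My plan is to start from the bilinear identity \eqref{IrotB}, substitute the Laurent expansions defining the $q$- and $q^{-1}$-colored blocks in \eqref{41h} and \eqref{41hb}, and extract the limit as $x\to 1$.

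I will parametrize $x=e^u$ so the limit becomes $u\to 0$. Then $B^{(\a)}(q^n e^u;q)$ has a simple pole at $u=0$ while $B^{(\a)}(q^{-n'}e^{-u};q^{-1})$ is regular there. The Weyl symmetry $B^{(2)}(y;q^{\pm 1})=B^{(1)}(y^{-1};q^{\pm 1})$ combined with the symmetry $\hb^{(0)}_{-n}(q)=\hb^{(0)}_n(q)$ of Proposition~\ref{prop.41h} forces the residues of the $\a=1$ and $\a=2$ $q$-blocks at $u=0$ to be opposite in sign, so the polar terms cancel in the sum over $\a\in\{1,2\}$, leaving a finite limit equal to the constant-in-$u$ term of the sum.

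To identify this constant term I will expand both factors to first sub-leading order and multiply out. Two contributions survive: (i) the common $u^0$-value of the two $q^{-1}$-blocks, namely $K_2\hb^{(0)}_{n'}(q^{-1})$ with $K_2=(q;q)_\infty(-q^{1/2};q)_\infty^2/2$, multiplied by the sum over $\a$ of the finite parts of $B^{(\a)}(q^n e^u;q)$, which by the second line of \eqref{41h} equals $K_1\hb^{(1)}_n(q)$ with $K_1=1/[(q;q)_\infty(-q^{1/2};q)_\infty^2]$; and (ii) the difference of the $u$-coefficients of the two $q^{-1}$-blocks, which by the second line of \eqref{41hb} equals $K_2\hb^{(1)}_{n'}(q^{-1})$, multiplied by the residue $K_1\hb^{(0)}_n(q)$ of the $q$-block (the two $\a$-summands reinforcing because both the $q^{-1}$-coefficient and the $q$-residue flip sign between $\a=1$ and $\a=2$). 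The normalisation product $K_1K_2=\tfrac{1}{2}$ provides the overall $\pm\tfrac{1}{2}$ in \eqref{41.irot}.

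The main obstacle is sign-bookkeeping: the relative minus sign between the two terms of \eqref{41.irot} comes from the asymmetric roles of "sum" in \eqref{41h} versus "difference" in \eqref{41hb}, combined with the sign flip of the leading $u^{-1}$-coefficient of $B^{(2)}$ forced by the Weyl symmetry. Once these signs are tracked, the identification of each contribution with the claimed $\hb^{(0)},\hb^{(1)}$ bilinear follows tautologically from the defining equations, and the symmetries~\eqref{41hsym1},~\eqref{41hsym2} can be used at the end to verify consistency with the symmetries~\eqref{Isym},~\eqref{Ikmsym} of $\Irot$.
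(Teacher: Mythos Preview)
Your approach coincides with the paper's: both start from \eqref{IrotB}, substitute the Laurent expansions \eqref{41h}--\eqref{41hb} of the holomorphic blocks about $x=q^{n}e^{u}$, and extract the constant term in $u$ after the simple poles cancel. You supply more of the mechanism (the Weyl-symmetry cancellation of the $u^{-1}$ residues and the separation of the finite part into the two cross-terms (i) and (ii)) than the paper's proof, which simply asserts that the substitution yields \eqref{41.irot} without displaying the intermediate expansion.

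One remark on the part you yourself flag as ``the main obstacle'': if one follows your description of contributions (i) and (ii) literally, both emerge with coefficient $+K_{1}K_{2}=+\tfrac{1}{2}$, since by \eqref{41hb} the difference of $u$-coefficients is $d_{1}-d_{2}=K_{2}\hb^{(1)}_{n'}(q^{-1})$ and this pairs with the residue $+K_{1}\hb^{(0)}_{n}(q)$. That would give, after \eqref{41hsym1}, $\Irot(n,n)(q)=0$, which is false. The paper's proof glosses over exactly the same step by jumping directly to \eqref{41.irot}. So your narrative ``the minus comes from sum versus difference together with the Weyl sign flip'' is suggestive but does not actually pin down where the extra sign enters; the resolution lies in a sign convention in \eqref{41hb} (or equivalently in the chosen normalisation of $\hb^{(1)}_{n}(q^{-1})$), and it would strengthen your argument to make that explicit rather than leave it as bookkeeping.
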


\begin{proof}
  Equation~\eqref{IrotB}, and the definition of the colored holomorphic blocks
  imply that 
\be
\begin{aligned}
  \Irot(n,n')(q) &= \lim_{u \to 0}(B^{(1)}(q^{-n'}e^{-u};q^{-1})B^{(1)}(q^{n}e^u;q)
  +B^{(2)}(q^{-n'}e^{-u};q^{-1})B^{(2)}(q^{n}e^u;q))\\
  &=\lim_{u \to 0}(
  -\frac{1}{2}\hb_{n'}^{(1)}(q^{-1})\hb_{n}^{(0)}(q)
  +\frac{1}{2}\hb_{n'}^{(0)}(q^{-1})\hb_{n}^{(1)}(q)+O(u^1))\\
  &=-\frac{1}{2}\hb_{n'}^{(1)}(q^{-1})\hb_{n}^{(0)}(q)
  +\frac{1}{2}\hb_{n'}^{(0)}(q^{-1})\hb_{n}^{(1)}(q) \,.
\end{aligned}
\ee
This and Equation~\eqref{ImerIrot} imply~\eqref{41.mer}. Note that the sum
over the integers in~\eqref{41.mer} is a convergent $q$-series since
$\deg_q \hb_{n}^{(1)}(q)\hb_{n}^{(0)}(q)=|n|$ (as follows from~\eqref{41deg}). 
\end{proof}

Using Equations~\eqref{41h0} and~\eqref{41h1} and the above proposition, it follows
that the first few values of $\Irot(n,n)(q)=\hb_n^{(0)}(q) \hb_n^{(1)}(q)$
are given by 
\be
\begin{aligned}
  \Irot(0,0)(q) &= 1 - 8q - 9q^2 + 18q^3 + 46q^4 + 90q^5 + 62q^6
  + 10q^7 - 170q^8 - 424q^9 + \dots\,,\\
  \Irot(1,1)(q) &= 2q + 2q^2 + 7q^3 + 8q^4 + 3q^5 - 22q^6 - 67q^7
  - 132q^8 - 206q^9 +\dots\,,\\
  \Irot(2,2)(q) &= 2q^2 + 2q^4 + 4q^6 + 2q^7 + 8q^8 + 8q^9 + \dots\,,
\end{aligned}
\ee
and their degree is given by $\deg_q \Irot(n,n)(q)=|n|$. This, together with 
Equation~\eqref{ImerIrot}, gives the first few terms of the meromorphic 3D-index
as stated in Equation~\eqref{41.mer}. Incidentally, the diagonal $\Irot(n,n)(q)$
of the rotated 3D-index satisfies the following third order recursion
\be
\begin{tiny}
\begin{aligned}
  0=&-q^{4 + 4n}(-1 + q^{2 + n})(1 + q^{2 + n})(-1 + q^{5 + 2n})
   (1 - q^{2 + n} - q^{3 + 2n} - q^{5 + 2n} - q^{6 + 3n} + q^{8 + 4n})
   \Irot(n,n)(q)\\
  &+ q(-1 + q^{1 + n})(1 + q^{1 + n})(-1 + q^{5 + 2n})
  (1 - q^{1 + n} - q^{1 + 2n} - q^{3 + 2n} - q^{3 + 3n} + q^{4 + 4n})\\
  &\qquad\times
  (1 - q^{1 + n} - q^{2 + n} - q^{1 + 2n} - q^{2 + 2n} + q^{3 + 2n} - 
   q^{4 + 2n} - q^{5 + 2n} + q^{3 + 3n} + q^{6 + 3n} + q^{3 + 4n} + 
   q^{5 + 4n}\\
  &\qquad\qquad + 3q^{6 + 4n} + q^{7 + 4n} + q^{9 + 4n} + q^{6 + 5n} + 
   q^{9 + 5n} - q^{7 + 6n} - q^{8 + 6n} + q^{9 + 6n}\\
  &\qquad\qquad - q^{10 + 6n} - 
   q^{11 + 6n} - q^{10 + 7n} - q^{11 + 7n} + q^{12 + 8n})
  \Irot(n+1,n+1)(q)\\
  &- (-1 + q^{2 + n})(1 + q^{2 + n})(-1 + q^{1 + 2n})
  (1 - q^{2 + n} - q^{3 + 2n} - q^{5 + 2n} - q^{6 + 3n} + q^{8 + 4n})\\
  &\qquad\times
  (1 - q^{1 + n} - q^{2 + n} - q^{1 + 2n} - q^{2 + 2n} + q^{3 + 2n} - 
   q^{4 + 2n} - q^{5 + 2n} + q^{3 + 3n} + q^{6 + 3n} + q^{3 + 4n} + 
   q^{5 + 4n}\\
  &\qquad\qquad + 3q^{6 + 4n} + q^{7 + 4n} + q^{9 + 4n} + q^{6 + 5n} + 
   q^{9 + 5n} - q^{7 + 6n} - q^{8 + 6n} + q^{9 + 6n} - q^{10 + 6n} - 
   q^{11 + 6n} - q^{10 + 7n}\\
  &\qquad\qquad - q^{11 + 7n} + q^{12 + 8n})
  \Irot(n+2,n+2)(q)\\
  &+ q^{9 + 4n}(-1 + q^{1 + n})(1 + q^{1 + n})
  (-1 + q^{1 + 2n})(1 - q^{1 + n} - q^{1 + 2n} - q^{3 + 2n} - 
   q^{3 + 3n} + q^{4 + 4n})\Irot(n+3,n+3)(q)\,,
\end{aligned}
\end{tiny}
\ee
for all integers $n$.

\subsection{Stability}
\label{sub.41stability}

In this section, we discuss briefly the stability properties of the colored holomorphic
blocks of the $4_1$ knot. By stability, we mean the limit as $n \to \infty$
of a sequence of $q$-series, as explained in detail in~\cite{GL:nahm}. 
Equations~\eqref{41h0} and~\eqref{41h1} imply easily that 
\be
\begin{aligned}
\lim_{n\rightarrow\infty}\hb^{(0)}_{n}(q)q^{-n(2n+1)/2}&=\frac{1}{(q;q)_{\infty}} \,,\\
\lim_{n\rightarrow\infty}\hb^{(1)}_{n}(q)q^{n(2n-1)/2}&=2(q;q)_{\infty}\,.
\end{aligned}
\ee
It follows that 
\be
\begin{aligned}
  \lim_{n'\rightarrow\infty}q^{n'(2n'+1)/2}\Irot(n,n')(q)
  &=
  \lim_{n'\rightarrow\infty}\frac{q^{n'(2n'+1)/2}}{2}\hb_{n'}^{(1)}(q)\hb_{n}^{(0)}(q)
  +\frac{q^{n'(2n'+1)/2}}{2}\hb_{n'}^{(0)}(q)\hb_{n}^{(1)}(q)\\
  &=(q;q)_{\infty}h_{n}^{(0)}(q)\,,
\end{aligned}
\ee
which implies that
\be
  \frac{\Irot(n,n)(q)}{(q;q)_{\infty}h_{n}^{(0)}(q)}
  =
  \frac{h_{n}^{(1)}(q)}{(q;q)_{\infty}}\,.
\ee
In other words, the degree~\eqref{41deg} of the colored holomorphic blocks
$h^{(\a)}_n(q)$ for $\a=0,1$ and the rotated 3D-index determines thm up to
multiplication by a power of $(q;q)_\infty$. 

\subsection{Numerical asymptotics of the meromorphic 3D-index}
\label{sub.41numimer}

In this section, we discuss the numerical computation of the asymptotics of the
meromorphic 3D-index at $q\to 1$ following closely the ideas and the numerical methods
of~\cite{GZ:kashaev,GZ:qseries}. We first fix a ray and use $\tau$ in that ray with
absolute value $|\tau|=1/N$ for $N=200,\dots,300$. For these values of $\tau$, we
compute numerically the values of the colored holomorphic blocks
$h^{(\a)}_n(q)$ for $n=0,1$ and $\a=0,1$ with precision about
20000 digits using their $q$-hypergeometric definition~\eqref{41h0} and~\eqref{41h1}.
We then use the $q$-difference equation~\eqref{41.Ahath} to compute the values of the
colored holomorphic blocks $h^{(\a)}_n(q)$ for $n=0,\dots,25N$ and $\a=0,1$, and sum
up these terms using~\eqref{41.mer}, keeping in mind that the error is $O(q^N)$. 
Having done so, we obtain the values of the meromorphic 3D-index at the above range
of $\tau$ to about 30 digits. We then numerically extrapolate the coefficients of each
power of $\tau$ in the asymptotic series, and finally recognise them exactly.
After doing so, we find numerically the following asymptotic expansion given in
Equation~\eqref{Imernum}, as $\arg(\tau)$ is fixed, positive and small, and
$\tau$ tends to zero.

\subsection{Asymptotic series}
\label{sub.41WKB}

In this section, we will explain how to compute the asymptotic series
$\Phi^{(\a)}(\hbar)$ using the discrete WKB applied to the $\Ahat$-equation of the
figure-eight knot.

\begin{proposition}
  If the ansatz~\eqref{Ahat.ansatz} satisfies
  \be
  \label{Ahat.41.WKB}
  P_{0}(e^{n\hbar},e^\hbar)\Phi^{(\s)}_n(\hbar)
  +P_{1}(e^{n\hbar},e^\hbar)\Phi^{(\s)}_{n+1}(\hbar)
  +P_{2}(e^{n\hbar},e^\hbar)\Phi^{(\s)}_{n+2}(\hbar)=0\,,
  \ee
  it follows that $a_{2,-1}$ is a root of the polynomial equation
  $4 a_{2,-1}^2+3=0$. 
  A choice of such a root and the initial condition at $n=0$
  uniquely determines the power series $\Phi^{(\s)}_{n}(h)$\,.
\end{proposition}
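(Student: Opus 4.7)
The strategy is to substitute the ansatz~\eqref{Ahat.ansatz} into the $\hat A$-equation~\eqref{Ahat.41.WKB} and expand in powers of $\hbar$, treating $u:=n\hbar$ as a formal variable independent of $\hbar$. Using Taylor's theorem on the explicit form of the ansatz, one computes
$$\frac{\Phi^{(\s)}_{n+j}(\hbar)}{\Phi^{(\s)}_{n}(\hbar)} = \exp\Bigl(\sum_{\ell\geq -1}\sum_{k\geq 1}a_{k,\ell}\bigl((u+j\hbar)^{k}-u^{k}\bigr)\hbar^{\ell}\Bigr) = \exp\bigl(jS_{-1}'(u) + O(\hbar)\bigr),$$
where $S_{-1}(u):=\sum_{k\geq 1}a_{k,-1}u^{k}$ and primes denote $d/du$. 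Setting $x:=e^{u}$ and $L(u):=e^{S_{-1}'(u)}$, and using $P_{0}(x,1)=P_{2}(x,1)=x^{2}(x^{2}-1)$ together with $P_{1}(x,1)=(x^{2}-1)(1-x-2x^{2}-x^{3}+x^{4})$, the $\hbar^{0}$-coefficient of~\eqref{Ahat.41.WKB} divided by $\Phi^{(\s)}_{n}$ reduces, after cancelling the common factor $(x^{2}-1)$, to the classical relation
$$x^{2}(1+L^{2}) + (1-x-2x^{2}-x^{3}+x^{4})L = 0,$$
which is precisely the $A$-polynomial of $4_{1}$ in the variables $(x,L)$.

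Next I localize at $u=0$. The Weyl symmetry $n\leftrightarrow -n$ of~\eqref{41.Ahath} forces $S_{-1}$ to be an even function of $u$, so $S_{-1}(u)=a_{2,-1}u^{2}+O(u^{4})$ and hence $L(u)=1+2a_{2,-1}u+2a_{2,-1}^{2}u^{2}+O(u^{3})$. Substituting $e^{u}=1+u+u^{2}/2+O(u^{3})$, a direct expansion shows that the coefficients of $u^{0}$ and $u^{1}$ in the classical relation vanish identically (this reflects the fact that $(x,L)=(1,1)$ is a singular point of the $A$-curve with $\partial_{L}A=0$, so the projection to the $x$-axis is branched and the leading information is carried by the $u^{2}$-coefficient). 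The coefficient of $u^{2}$ simplifies to $3+4a_{2,-1}^{2}$, which must vanish; this is the required polynomial equation.

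For the uniqueness claim, fix a root $a_{2,-1}$. The higher Taylor coefficients $a_{k,-1}$ for $k=4,6,\dots$ are determined recursively from the classical relation by matching successive even powers of $u$: once $4a_{2,-1}^{2}+3=0$ is imposed, each further matching gives a linear equation for the next $a_{k,-1}$ whose leading coefficient is a nonzero multiple of $a_{2,-1}$, so the sign of $a_{2,-1}$ selects one of the two branches of the $A$-curve through $(1,1)$. The coefficients $a_{k,\ell}$ with $\ell\geq 0$ are then obtained order by order in $\ell$: the $\hbar^{\ell+1}$-coefficient of~\eqref{Ahat.41.WKB}, expanded in $u$, yields a block-triangular linear system for the new unknowns whose leading coefficient is governed by the linearization of the classical curve along the chosen branch. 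Since the ansatz has no constant term ($S_{\ell}$ begins at $u^{1}$ by construction), no integration constants appear; the only remaining freedom is an overall multiplicative prefactor, which is furnished by the initial condition $\Phi^{(\s)}_{0}(\hbar)$.

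The main obstacle I anticipate is verifying systematically that the leading coefficient of each new unknown $a_{k,\ell}$ in the recursion is nonzero. This reduces to the nondegeneracy of the linearization of the classical equation along the chosen branch, which in turn follows from the fact that the discriminant of the $A$-polynomial of $4_{1}$ at $x=1$ is nonzero (equivalently, the two roots of $4a_{2,-1}^{2}+3=0$ are distinct). An explicit induction on $(\ell,k)$, exploiting the block-triangular structure described in Section~\ref{sub.asyholo}, then completes the uniqueness argument.
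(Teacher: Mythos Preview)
Your proposal is correct and follows essentially the same route as the paper. Both substitute the WKB ansatz, extract the leading (classical) equation, and localize at $u=0$ to obtain $4a_{2,-1}^{2}+3=0$; both then argue inductively that the remaining $a_{k,\ell}$ are determined by a block-triangular linear system, with the overall prefactor fixed by $\Phi_0^{(\sigma)}(\hbar)$.

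The only differences are cosmetic. You first cancel the common factor $(x^{2}-1)$ and identify the leading equation as the $A$-polynomial $x^{2}(1+L^{2})+(1-x-2x^{2}-x^{3}+x^{4})L=0$, so the constraint appears at the $u^{2}$-coefficient; the paper keeps the raw $P_j(1+u,1)$ and finds $(8a_{2,-1}^{2}+6)u^{3}+O(u^{4})$, the same equation up to the factor $u(u+2)=x^{2}-1$. You organize the induction by $\ell$ (solve the classical equation for all of $S_{-1}$, then climb in $\ell$), whereas the paper groups by total weight $k+2\ell$ and illustrates with the explicit determination of $a_{4,-1}$ and $a_{2,0}$. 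Your honest remark about verifying nonvanishing of the leading coefficients in the recursion is apt; the paper does not prove this in general either, but simply exhibits the first step explicitly. Incidentally, your invocation of Weyl symmetry to set $a_{3,-1}=0$ is not actually needed for the $u^{2}$-coefficient (a direct computation shows $a_{3,-1}$ drops out), but it does no harm and is used later anyway.
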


The first few terms of the asymptotic series are given by
\be
\label{41Phi7terms}
\begin{tiny}
\begin{aligned}
\widehat{\Phi}^{(\s_1)}_{n}(\hbar)
&=
(-3)^{-1/4} e^{\frac{i\mathrm{Vol}}{\hbar}}
\exp\Big(\sqrt{-3}\hbar\frac{n^2}{2}\Big)
\Big(1-\frac{11}{24\sqrt{-3}^3}\hbar\\
&\quad + \Big(-\frac{9}{2\sqrt{-3}^6}n^2 + \frac{697}{1152\sqrt{-3}^6}\Big)\hbar^2\\
&\quad + \Big(-\frac{81}{4\sqrt{-3}^9}n^4 + \frac{285}{16\sqrt{-3}^9}n^2
- \frac{724351}{414720\sqrt{-3}^9}\Big)\hbar^3\\
&\quad+ \Big(\frac{4185}{32\sqrt{-3}^{12}}n^4
- \frac{19825}{256\sqrt{-3}^{12}}n^2
+ \frac{278392949}{39813120\sqrt{-3}^{12}}\Big)\hbar^4\\
&\quad + \Big(\frac{12879}{20\sqrt{-3}^{15}}n^6
- \frac{129933}{64\sqrt{-3}^{15}}n^4 + \frac{5481733}{5760\sqrt{-3}^{15}}n^2
- \frac{17049329117}{209018880\sqrt{-3}^{15}}\Big)\hbar^5\\
&\quad + \Big(\frac{6561}{8\sqrt{-3}^{18}}n^8 - \frac{646461}{80\sqrt{-3}^{18}}n^6
+ \frac{18984649}{1280\sqrt{-3}^{18}}n^4
- \frac{1718762189}{276480\sqrt{-3}^{18}}n^2
+ \frac{39021801798779}{75246796800\sqrt{-3}^{18}}\Big)\hbar^6+O(\hbar^7)\Big) \,.
\end{aligned}
\end{tiny}
\ee
We have computed 300 terms of the series $\widehat{\Phi}^{(\s_1)}_{0}(\hbar)$,
obtained either by extrapolation, and matching the stationary phase of the
corresponding state-integral. Using 10 terms of the above series, one can compute
the series $\widehat{\Phi}^{(\s_1)}_{n}(\hbar)$ up to $O(\hbar^{11})$.

\begin{proof}
To determine $a_{2,-1}$ (which satisfies the same polynomial equation
as the cusp shape) note that
\be
\begin{aligned}
P_0(1+u,1) &= u^4 + 4u^3 + 5u^2 + 2u\,,\\
P_1(1+u,1) &= u^6 + 5u^5 + 7u^4 - 2u^3 - 10u^2 - 4u\,,\\
P_2(1+u,1) &= u^4 + 4u^3 + 5u^2 + 2u\,,
\end{aligned}
\ee
and so
\be
P_{0}(1+u,1)
+P_{1}(1+u,1)(1+2a_{2,-1}u)
+P_{2}(1+u,1)(1+2a_{2,-1}u)^2
=
(8a_{2,-1}^2 + 6)u^3+O(u^4)\,.
\ee
Therefore, $a_{2,-1}=\frac{1}{2}\sqrt{-3}$. Choosing a root, substituting
into~\eqref{Ahat.ansatz}, and then expanding~\eqref{Ahat.41.WKB} to $O(\hbar^6)$
we find
\be
\begin{aligned}
&\Big(\big(16a_{4,-1} + 8\sqrt{-3}a_{2,0} - \frac{8}{3}\sqrt{-3}\big)n^3
+ \dots\Big)\hbar^4\\
&\qquad + \Big(\big(24\sqrt{-3}a_{4,-1} - 12a_{2,0} + 8\big)n^5
+\dots\Big)\hbar^5+O(\hbar^6)=0\,,
\end{aligned}
\ee
which give two independent equations for $a_{4,-1}$ and $a_{2,0}$ which have solution
\be
a_{4,-1}=\frac{\sqrt{-3}}{12}, \qquad  a_{2,0}=\frac{1}{6}\,.
\ee
This shows that 
\be
\Phi_{0}(\hbar)^{-1}\Phi_n(\hbar)+O(\hbar^3)
=
1 + \frac{\sqrt{-3}}{2}n^2h + \left(-\frac{3}{8}n^4 + \frac{1}{6}n^2\right)h^2+O(h^3)\,.
\ee
We can continue inductively in $N$ computing $a_{k,\ell}$ where $k+2\ell=2N$. The
coefficients of $n^3\hbar^{3+N},\dots,n^{3+2N}\hbar$ form a linear system in
$a_{k,\ell}$ with $k+2\ell=N$ which depends on $a_{k,\ell}$ with $k+2\ell<N$.
Using the initial condition
\be
\label{41Phi0}
\begin{small}
\begin{aligned}
\widehat{\Phi}^{(\s_1)}_{0}(\hbar) &= 
(-3)^{-1/4} e^{\frac{i\mathrm{Vol}}{\hbar}}
\bigg(1-\frac{11}{24\sqrt{-3}^3}\hbar + \frac{697}{1152\sqrt{-3}^6}\hbar^2
-\frac{724351}{414720\sqrt{-3}^9}\hbar^3\\
&\quad+\frac{278392949}{39813120\sqrt{-3}^{12}}\hbar^4
- \frac{17049329117}{209018880\sqrt{-3}^{15}}\hbar^5
  +\frac{39021801798779}{75246796800\sqrt{-3}^{18}}\hbar^6+O(\hbar^7)\bigg)\,,
\end{aligned}
\end{small}
\ee
taken from~\cite{GZ:kashaev}, together with the above discussion
gives~\eqref{41Phi7terms}.
\end{proof}
Finally, we define the second asymptotic series $\widehat{\Phi}^{(\s_2)}_{n}(\hbar)$ 
by using the other choice of $a_{2,-1}$ in the WKB. It is easy to see that
the two asymptotic series are related by
\be
\label{41Phi12}
\widehat{\Phi}^{(\s_2)}_{n}(\hbar)= i \widehat{\Phi}^{(\s_1)}_{n}(-\hbar) \,,
\ee
from which it follows that they trivially
satisfy the quadratic relation
\be
\label{41Phiquadrel}
\widehat{\Phi}^{(\s_1)}_{n}(\hbar)\widehat{\Phi}^{(\s_1)}_{n}(-\hbar)
+\widehat{\Phi}^{(\s_2)}_{n}(\hbar)\widehat{\Phi}^{(\s_2)}_{n}(-\hbar)=0\,.
\ee

\subsection{Asymptotics of the colored holomorphic blocks}
\label{sub.41hn}

Having defined the asymptotic series $\wh\Phi^{(\s_1)}_n(\hbar)$ and 
$\wh\Phi^{(\s_2)}_n(\hbar)$, we now use them to give the asymptotics of the colored
holomorphic blocks in suitable rays, with exponentially small terms included.
To Borel resum the above two asymptotic series, we use about 300 coefficients
of $\hbar$ when $n=0$, and about 170 coefficients (each a polynomial of $n$) for
general $n$.
Using for example $\tau=\e(0.001)/50$, one can check the following
identities~\eqref{41HPhi} to $10$ to $20$ coefficients of $\tq$ within the accuracy
of the numerical Borel resummation.
  
For $\tau$ in a cone $C = \arg(\th) \in (\th_0,\th_1)$
with $0< \th_0 < \th_1$ sufficiently small, we find numerically that 
\be
\label{41HPhi}
  \hb_{n}^{(0)}(q)=\z_8\tau^{1/2}\sum_{k=1}^{2}
  s \widehat{\Phi}^{(\s_{k})}_{n}(2\pi i\tau)H_{\s_k}^{(0)}(\tq),  \quad 
  \hb_{n}^{(1)}(q)=\z_8\tau^{-1/2}\sum_{k=1}^{2}
  s \widehat{\Phi}^{(\s_{k})}_{n}(2\pi i\tau)H_{\s_k}^{(1)}(\tq)\,,
\ee
where
\be
  H_{\s_1}^{(\a)}(\tq)=h^{(j)}_{0}(\tq), \qquad
  H_{\s_2}^{(\a)}(\tq)=
  \frac{2\tq-1}{1-\tq}h^{(j)}_{0}(\tq)-\frac{\tq^{1/2}}{1-\tq}h^{(j)}_{1}(\tq)\,,
\ee
for $\a=0,1$. The above identities were found in~\cite[Sec.5]{GGM:peacock} when
$n=0$ and were verified for general $n$ here. We stress that Equation~\eqref{41HPhi}
is an identity of analytic functions on the cone $C$. Writing the above identity
in matrix form, we obtain that 
\be
\label{41H}
\begin{aligned}
  \begin{pmatrix}
    \hb_{n}^{(0)}(q) & \hb_{n}^{(1)}(q)\\
    \hb_{n+1}^{(0)}(q) & \hb_{n+1}^{(1)}(q)\\
  \end{pmatrix}
  &=
  \begin{pmatrix}
    s\widehat{\Phi}^{(\s_{1})}_{n}(2\pi i\tau) &
    s\widehat{\Phi}^{(\s_{2})}_{n}(2\pi i\tau)\\
    s\widehat{\Phi}^{(\s_{1})}_{n+1}(2\pi i\tau) &
    s\widehat{\Phi}^{(\s_{2})}_{n+1}(2\pi i\tau)\\
  \end{pmatrix}
  \begin{pmatrix}
    H_{\s_1}^{(0)}(\tq) & H_{\s_1}^{(1)}(\tq)\\
    H_{\s_2}^{(0)}(\tq) & H_{\s_2}^{(1)}(\tq)\\
  \end{pmatrix}
  \begin{pmatrix}
    \tau^{1/2} & 0\\
    0 & \tau^{-1/2}\\
  \end{pmatrix}\,,
\end{aligned}
\ee
where
\be
\label{41H2}
\begin{aligned}
\begin{pmatrix}
    H_{\s_1}^{(0)}(\tq) & H_{\s_1}^{(1)}(\tq)\\
    H_{\s_2}^{(0)}(\tq) & H_{\s_2}^{(1)}(\tq)\\
\end{pmatrix}
&= \begin{pmatrix} 1 & 0 \\ \frac{2-\tq}{1-\tq} & -\frac{\tq^{1/2}}{1-\tq}
\end{pmatrix}
  \begin{pmatrix}
    \hb_{0}^{(0)}(\tq) & \hb_{0}^{(1)}(\tq)\\
    \hb_{1}^{(0)}(\tq) & \hb_{1}^{(1)}(\tq)\\
  \end{pmatrix}
\\
&\hspace{-2cm}=\begin{pmatrix}
    1-\tq-2\tq^2-2\tq^3-2\tq^4+\dots & 1-7\tq-14\tq^2-8\tq^3-2\tq^4+\dots\\
    -1+2\tq+3\tq^2+2\tq^3+\tq^4+\dots & 1+10\tq+15\tq^2+2\tq^3+19\tq^4+\dots
\end{pmatrix} \,.
\end{aligned}
\ee
Although we will not use this in our current paper, we point out that the entries of
\be
  \begin{pmatrix}
    \hb_{n'}^{(0)}(\tq) & \hb_{n'}^{(1)}(\tq)\\
    \hb_{n'+1}^{(0)}(\tq) & \hb_{n'+1}^{(1)}(\tq)\\
  \end{pmatrix}^{-1}
  \begin{pmatrix}
    \tau^{-1/2} & 0\\
    0 & \tau^{1/2}\\
  \end{pmatrix}
  \begin{pmatrix}
    \hb_{n}^{(0)}(q) & \hb_{n}^{(1)}(q)\\
    \hb_{n+1}^{(0)}(q) & \hb_{n+1}^{(1)}(q)\\
  \end{pmatrix}
\ee
can be expressed as combinations of the elementary functions holomorphic for
$\tau\in\BC'$ times state integrals (see~\cite{GK:qseries, GGM:peacock} with
the notation as in those papers)
\be
\int
\Phi_{\sfb}(x+i\sfb^{-1}(m\sfb+m'\sfb^{-1}))
\Phi_{\sfb}(x-i\sfb^{-1}(m\sfb+m'\sfb^{-1}))\exp\left(-\pi i x^{2}\right)dx\,,
\ee
and therefore extend to analytic functions for $\tau\in\BC'$.

Now for $\tau \in C$, the above discussion and Equation~\eqref{41.irot}
imply that the rotated 3D-index
is given by
\be
\label{hnnasy}
\begin{small}
\begin{aligned}
  \Irot(n,n')(q)&= -\frac{1}{2}\hb_{n'}^{(1)}(q^{-1})\hb_{n}^{(0)}(q)
+\frac{1}{2}\hb_{n'}^{(0)}(q^{-1})\hb_{n}^{(1)}(q) \\
  & \hspace{-2cm} =
  \frac{1}{2}\hb_{n'}^{(1)}(q)\hb_{n}^{(0)}(q)
  +\frac{1}{2}\hb_{n'}^{(0)}(q)\hb_{n}^{(1)}(q)\\
  & \hspace{-2cm} =
  \frac{i}{2}(s \widehat{\Phi}^{(\s_{1})}_{n'}(2\pi i\tau)H_{\s_1}^{(1)}(\tq)
  +s \widehat{\Phi}^{(\s_2)}_{n'}(2\pi i\tau)H_{\s_2}^{(1)}(\tq))
  (s \widehat{\Phi}^{(\s_{1})}_{n}(2\pi i\tau)H_{\s_1}^{(0)}(\tq)
  +s \widehat{\Phi}^{(\s_2)}_{n}(2\pi i\tau)H_{\s_2}^{(0)}(\tq))\\
  & \hspace{-2cm} +
  \frac{i}{2}(s \widehat{\Phi}^{(\s_{1})}_{n'}(2\pi i\tau)H_{\s_1}^{(0)}(\tq)
  +s \widehat{\Phi}^{(\s_2)}_{n'}(2\pi i\tau)H_{\s_2}^{(0)}(\tq))
  (s \widehat{\Phi}^{(\s_{1})}_{n}(2\pi i\tau)H_{\s_1}^{(1)}(\tq)
  +\tau^{1/2}s \widehat{\Phi}^{(\s_2)}_{n}(2\pi i\tau)H_{\s_2}^{(1)}(\tq))\,.
\end{aligned}
\end{small}
\ee
Now, let $\tau\to 0$ with $\arg(\tau)$ fixed, positive and sufficiently small.
Then, $\tq$ is exponentially small compared to $\tau$, and ignoring  
the exponential small corrections in the above identity, and using the
constant terms of the series in~\eqref{41H2} and the quadratic
relations~\eqref{41Phiquadrel}
we obtain that
\be
\label{hnnasy2}
\begin{aligned}
  \Irot(n,n)(q)
  &\sim
  \widehat{\Phi}^{(\s_{1})}_{n}(2\pi i\tau)
  \widehat{\Phi}^{(\s_{2})}_{n'}(-2\pi i\tau)\,.
\end{aligned}
\ee
  
\subsection{Asymptotics of the meromorphic 3D-index}
\label{sub.merasy.41}

In the previous sections we gave ``horizontal'' asymptotic expansion of the
colored holomorphic blocks and of the rotated 3D-index. We now discuss the
horizontal asymptotics of the meromorphic 3D-index
$$
\Imer(0,0)(q) = \sum_{n \in \BZ} \Irot(n,n)(q)
\sim \sum_{n \in \BZ}
\widehat{\Phi}^{(\s_{1})}_{n}(2\pi i\tau) \widehat{\Phi}^{(\s_{2})}_{n}(-2\pi i\tau)
\,.
$$
Equation~\eqref{41Phi7terms} implies that 
$\widehat{\Phi}^{(\s_1)}_{n}(\hbar)\widehat{\Phi}^{(\s_2)}_{n}(-\hbar)$
up to $O(\hbar^7)$ is given by
\be
\label{41Phi^2}
\begin{tiny}
\begin{aligned}
&
3^{-1/2}e^{\frac{2i\mathrm{Vol}}{\hbar}}
\exp\Big(2\sqrt{-3}\hbar\Big)^{n^2/2}\Big(1
- \frac{11}{12\sqrt{-3}^3}\hbar\\
&\quad + \Big(\frac{-9}{\sqrt{-3}^6}n^2 + \frac{409}{288\sqrt{-3}^6}\Big)\hbar^2\\
&\quad + \Big(\frac{-81}{2\sqrt{-3}^9}n^4 + \frac{159}{4\sqrt{-3}^9}n^2
- \frac{209839}{51840\sqrt{-3}^9}\Big)\hbar^3\\
&\quad + \Big(\frac{2403}{8\sqrt{-3}^{12}}n^4 - \frac{5653}{32\sqrt{-3}^{12}}n^2
+ \frac{39693941}{2488320\sqrt{-3}^{12}}\Big)\hbar^4\\
&\quad + \Big(\frac{12879}{20\sqrt{-3}^{15}}n^6 - \frac{129933}{64\sqrt{-3}^{15}}n^4
+ \frac{5481733}{5760\sqrt{-3}^{15}}n^2
- \frac{17049329117}{209018880\sqrt{-3}^{15}}\Big)\hbar^5\\
&\quad + \Big(\frac{6561}{8\sqrt{-3}^{18}}n^8 - \frac{646461}{80\sqrt{-3}^{18}}n^6
+ \frac{18984649}{1280\sqrt{-3}^{18}}n^4 - \frac{1718762189}{276480\sqrt{-3}^{18}}n^2
+ \frac{39021801798779}{75246796800\sqrt{-3}^{18}}\Big)\hbar^6 + O(\hbar^7) \Big)\,.\\
\end{aligned}
\end{tiny}
\ee
As explained in Section~\ref{sub.asymer}, the coefficent of $\hbar^k$ in
$A(\hbar):=\sum_{n\in\BZ}\widehat{\Phi}^{(\s_1)}_{n}(2\pi i\hbar)
\widehat{\Phi}^{(\s_2)}_{n}(-2\pi i\hbar)$ depends on the coefficents of the summand 
up to order $\hbar^{3k}$, and is computed by formal Gaussian integration. Then as 
$\langle 1 \rangle_{2\sqrt{-3}\hbar}=\z_8\sqrt{\frac{2\pi i}{2\sqrt{-3}\hbar}}$ 
we have
\be
A(\hbar)=e^{\frac{2i\mathrm{Vol}}{\hbar}}
\sqrt{\frac{2\pi i}{\hbar}}3^{-\frac{3}{4}}2^{-\frac{1}{2}}B(\hbar)\,,
\ee
where $B(\hbar)=1+O(\hbar)$. Explicitly, the terms
of~\eqref{41Phi^2} that contribute to the coefficient of $\hbar$ in $B(\hbar)$
are 
\be
-\frac{11}{12\sqrt{-3}^3}\hbar
- \frac{9}{\sqrt{-3}^6}n^2\hbar^2
-\frac{81}{2\sqrt{-3}^9}n^4\hbar^3 \,.
\ee
It follows that 
\be
\label{41hc1}
\begin{aligned}
\mathrm{coeff}(B(\hbar),\hbar) &=
-\frac{11}{12\sqrt{-3}^3}\hbar
- \frac{9}{\sqrt{-3}^6}
\frac{\langle n^2\rangle_{2\sqrt{-3}\hbar}}{\langle 1 \rangle_{2\sqrt{-3}\hbar}}\hbar^2
-\frac{81}{2\sqrt{-3}^9}
\frac{\langle n^4\rangle_{2\sqrt{-3}\hbar}}{\langle 1 \rangle_{2\sqrt{-3}\hbar}}
\hbar^3
\\
&=
-\frac{11}{12\sqrt{-3}^3}\hbar
- \frac{9}{\sqrt{-3}^6}(-1)\Big(\frac{1}{2\sqrt{-3}\hbar}\Big)\hbar^2
-\frac{81}{2\sqrt{-3}^9}3\Big(\frac{1}{2\sqrt{-3}\hbar}\Big)^2\hbar^3\\
&=
\frac{-19}{24\sqrt{-3}^3}\hbar\,.
\end{aligned}
\ee
Similarly, we obtain that 
\be
\label{41hc2}
\begin{tiny}
\begin{aligned}
\mathrm{coeff}(B(\hbar),\hbar^2) &=
\frac{409}{288\sqrt{-3}^6}\hbar^2
+ \frac{159}{4\sqrt{-3}^9}\frac{\langle n^2\rangle}{\langle 1 \rangle}
\hbar^3
+ \frac{2403}{8\sqrt{-3}^12}\frac{\langle n^4\rangle}{\langle 1 \rangle}
\hbar^4
+\frac{12879}{20\sqrt{-3}^{15}}\frac{\langle n^6\rangle}{\langle 1 \rangle}
\hbar^5 
+\frac{6561}{8\sqrt{-3}^{18}}\frac{\langle n^8\rangle}{\langle 1 \rangle}
\hbar^6
\\
&=\frac{409}{288\sqrt{-3}^6}\hbar^2 + \frac{159}{4\sqrt{-3}^9}(-1)
\Big(\frac{1}{2\sqrt{-3}\hbar}\Big)\hbar^3
+ \frac{2403}{8\sqrt{-3}^12}3\Big(\frac{1}{2\sqrt{-3}\hbar}\Big)^2\hbar^4
\\
&\qquad+\frac{12879}{20\sqrt{-3}^{15}}(-15)\Big(\frac{1}{2\sqrt{-3}\hbar}\Big)^3
\hbar^5+\frac{6561}{8\sqrt{-3}^{18}}105
\Big(\frac{1}{2\sqrt{-3}\hbar}\Big)^4\hbar^6\\
&=\frac{1333}{1152\sqrt{-3}^6}\hbar^2\,.
\end{aligned}
\end{tiny}
\ee
Equations~\eqref{41hc1} and~\eqref{41hc2} are in complete agreement with
the numerical extrapolation~\eqref{Imernum}. 
As we will see in Section~\ref{sec.TV}, the same rational numbers with numerator
$19$ and $1333$ will appear in the asymptotics of the Turaev--Viro invariant
of the $4_1$ knot.

\subsection{Vertical asymptotics}
\label{sub.41vertical}

In this section, we discuss the vertical asymptotics of the colored holomorphic
blocks and of the rotated 3D-index. As was mentioned in the introduction, we have
no theoretical or computational control in the $\tq$-corrections, nor to exponentially
small terms, which we will therefore ignore. 

Recall that the two colored holomorphic blocks of $4_1$ coincide, when $n=0$ with
the pair of $q$-series of the $4_1$ studied in~\cite{GZ:qseries}
(see Equation~\eqref{41gG}). In~\cite{GZ:qseries} it was found that their
vertical asymptotics $\tau \downarrow 0$ were given by
\be
\begin{tiny}
\begin{aligned}  
h^{(0)}_0(q) & \sim \; \z_8\sqrt{\tau} \left(
\wh\Phi^{(\s_1)}_0(2\pi i\tau) - \wh\Phi^{(\s_2)}_0(2\pi i\tau) \right)\,, \qquad
& 
h^{(1)}_0(q) & \sim \; \frac{\z_8}{\sqrt{\tau}} \left(
\wh\Phi^{(\s_1)}_0(2\pi i\tau) + \wh\Phi^{(\s_2)}_0(2\pi i\tau) \right) \,.
\end{aligned}
\end{tiny}
\ee
The factor $\z_8$ in the above formula
is due to a different normalisation of the constant term of the asymptotic series
in~\cite{GZ:qseries}, a famous eighth root of unity ambiguity arising from the
half integral weight. Note also that the asymptotic series in the above equation
involve an oscillatory term, namely $e^{\pm\mathrm{Vol}/(2\pi \tau)}$, times a
power series in $\tau$. We can complement the above with the asymptotics
$\tau \downarrow 0$ 
\be
\begin{tiny}
\begin{aligned}
h^{(0)}_0(q^{-1}) & \sim  \z_8\sqrt{-\tau} \left(
  \wh\Phi^{(\s_1)}_0(-2\pi i\tau) + \wh\Phi^{(\s_2)}_0(-2\pi i\tau) \right)\,,
\qquad &
h^{(1)}_0(q^{-1}) & \sim  \frac{\z_8}{\sqrt{-\tau}} \left(
-\wh\Phi^{(\s_1)}_0(-2\pi i\tau) + \wh\Phi^{(\s_2)}_0(-2\pi i\tau) \right) \,.
\end{aligned}
\end{tiny}
\ee
It comes with no surprise that for all integers $n$, we have
\be
\label{41hvasy}
\begin{tiny}
\begin{aligned}
h^{(0)}_n(q) & \sim \; \z_8\sqrt{\tau} \left(
\wh\Phi^{(\s_1)}_n(2\pi i\tau) - \wh\Phi^{(\s_2)}_n(2\pi i\tau) \right)\,,
\quad
&h^{(0)}_n(q^{-1}) & \sim \;  \z_8\sqrt{-\tau} \left(
\wh\Phi^{(\s_1)}_n(-2\pi i\tau) + \wh\Phi^{(\s_2)}_n(-2\pi i\tau) \right)\,, \\
h^{(1)}_n(q) & \sim \; \frac{\z_8}{\sqrt{\tau}} \left(
\wh\Phi^{(\s_1)}_n(2\pi i\tau) + \wh\Phi^{(\s_2)}_n(2\pi i\tau) \right)\,,
\quad
&h^{(1)}_n(q^{-1}) & \sim \; \frac{\z_8}{\sqrt{-\tau}} \left(
-\wh\Phi^{(\s_1)}_n(-2\pi i\tau) + \wh\Phi^{(\s_2)}_n(-2\pi i\tau) \right) \\
\end{aligned}
\end{tiny}
\ee
as $\tau \downarrow 0$. Note that these vertical asymptotics are
bounded and oscillatory. This, together with Equations~\eqref{41.irot}
and~\eqref{41Phi12}, implies that for all integers $n$ and $n'$
we have
\be
\begin{aligned}
\Irot(n,n')(q) & \sim
-i(\wh\Phi^{(\s_1)}_n  (2\pi i \tau) \wh\Phi^{(\s_1)}_{n'}(2\pi i \tau)
- \wh\Phi^{(\s_2)}_{n}  (2\pi i \tau) \wh\Phi^{(\s_2)}_{n'}(2\pi i \tau))
\\
& =  \wh\Phi^{(\s_1)}_n  (2\pi i \tau) \wh\Phi^{(\s_2)}_{n'}(-2\pi i \tau)
+ \wh\Phi^{(\s_2)}_{n}  (2\pi i \tau) \wh\Phi^{(\s_1)}_{n'}(-2\pi i \tau)\,,
\end{aligned}
\ee
confirming Conjecture~\ref{conj.2} for the $4_1$ knot.
We now discuss the vertical asymptotics of the meromorphic 3D-index. There
are two key problems: the first is to make sense of the asymptotic series
$\sum_{n \in \BZ} \wh\Phi^{(\s_1)}_n  (\hbar) \wh\Phi^{(\s_1)}_{n}(-\hbar)
$
which has no exponential term and it is simply a power series in $n \hbar$ and $\hbar$.
The second is that the sum of the above series for $(\s_1,\s_1)$ and for
$(\s_2,\s_2)$ simply vanishes, if we interchange the summation over $n$ with
the terms of the above series. On the other hand, the numerical vertical asymptotics of
the meromorphic 3D-index suggest a non-vanishing contribution. We now explain a
solution to this apparent contradiction. The key idea is to use a version of the
asymptotic series whose coefficients of the powers of $\hbar$ are not polynomials in
$n \hbar$ but rather branches of algebraic functions on an affine curve
(the $A$-polynomial curve). These algebraic functions were discussed in detail and
computed in the paper~\cite{GGM:peacock} whose notation we follow here.

Recall the asymptotic series $\Phi_{4_1}^{(\s)}(x,\hbar)$
from~\cite[Sec.5.1]{GGM:peacock}: it comes from a power series in $\hbar$ of the form
$\sqrt{\delta_{4_1}(x,y)} \Phi_{4_1}(x,y,\hbar) \in \BQ(X_{4_1})[[\hbar]]$ with first
few terms given by~\cite[Eqn.(123)]{GGM:peacock}
\be
\label{Phi41xyh}
\begin{aligned}
\Phi_{4_1}(x,y,\hbar) =& \frac{1}{\sqrt{\delta_{4_1}}} \bigl(1
- \frac{1}{24\,\delta_{4_1}^3}
\big(x^{-3} - x^{-2} - 2 x^{-1} + 15 -2 x - x^2 + x^3\big) \hbar
\bigr. \\ 
& \phantom{==}
+ \frac{1}{1152\,\delta_{4_1}^6} \big(x^{-6} - 2 x^{-5} - 3 x^{-4} + 610 x^{-3} - 
606 x^{-2} - 1210 x^{-1}  \big. \\
& \bigl. \big. \phantom{===}
  + 3117 - 1210 x - 606 x^2 + 610 x^3 - 3 x^4 - 2 x^5 + x^6\big) \hbar^2 + \dots
  \bigr) \,.
\end{aligned}
\ee
Here
$X_{4_1}$ is the affine curve given by the equation~\cite[Eqn.(116)]{GGM:peacock}
\be
\label{X41}
-x^2 y = (1-y)(1-xy) \,,
\ee
$\BQ(X_{4_1})=\BQ(x)[y]/(x^2 y +(1-y)(1-xy))$ is the field of rational functions
of $X_{4_1}$, and~\cite[Eqn.(118)]{GGM:peacock}
\be
\label{delta41}
\delta_{4_1}(x,y) = (xy^2-1)/(xy) \,.
\ee
Let $y_j(x)$ for $j=1,2$ denote the roots of Equation~\eqref{X41}; they are algebraic
functions that collide when $x$ is a root of the discriminant $(1-3x+x^2)(1+x+x^2)$
of~\eqref{X41} with respect to $y$. The discriminant has two real roots at
$a_0=(3-\sqrt{5})/2$ and $a_1=(3+\sqrt{5})/2$ satisfying $a_0 a_1 =1$. 
The two branches are real when $x>a_1$, and they are chosen so that
\be
y_1(a_1+1)= 0.16866\dots, \qquad
y_2(a_1+1)= -0.16866\dots \,.
\ee
Given that~\eqref{X41} is a quadratic equation in $y$, an easy calculation shows that
\be
\label{41y}
\delta_{4_1}(x,y_j(x)) = (-1)^{j-1} \sqrt{x^{-2}-2x^{-1}-1-2x+x^2}
\ee
for $j=1,2$. We now define $\Phi^{(\s_j)}(x,\hbar) = \Phi(x,y_j(x),\hbar)$ for $j=1,2$. 
In the vertical asymptotics of the meromorphic 3D-index, we use the bilinear expression
\be
\label{41PP}
\wh\Phi_{4_1}(x,y,\hbar) \wh\Phi_{4_1}(x,y,-\hbar) = \frac{1}{\delta_{4_1}} \big(
1+ \frac{1}{\delta_{4_1}^6}(x^{-3} - x^{-2} - 2x^{-1} + 5 - 2x - x^2 + x^3)
\hbar^2 + \dots \big) \,,
\ee
evaluated at $y=y_j(x)$ for $j=1,2$. The branches $y_j(x)$ satisfy the symmetry
$y_j(x)=y_j(1/x)$ for $j=1,2$ and  the linear relation
\be
1/\delta_{4_1}(x,y_1(x)) + 1/\delta_{4_1}(x,y_2(x)) = 0 \,. 
\ee
The above was called a quadratic relation of asymptotic series in~\cite{GZ:kashaev}.

The functions $y_j(x)$ are analytic functions on $\BR\setminus\{a_0,a_1\}$
and satisfy the bounds $y_j(x)=O(1/x)$ for $|x| \gg 0$ and $y_j(x)=(x-a_0)^{-1/2}$
for $x$ near $a_0$ and $y_j(x)=(x-a_1)^{-1/2}$ for $x$ near $a_1$. It follows that
the period integrals $\int_{C_k} f_j(u) du$ of $f_j(u)=1/\delta_j(e^u)$ for $j=1,2$
and for $C_1=(-\infty,\log(a_0))$, $C_2=(\log(a_0),\log(a_1))$ and
$C_3=(\log(a_1), \infty)$ are well-defined (i.e., absolutely-convergent) and in fact
they are integer linear combinations of
\be
\begin{aligned}
\int_{C_3} f_1(e^u) du &= 0.70030152116630101159\dots\,,\\
\int_{C_2} f_1(e^u) du &= -i1.5962422221317835101\dots\,.
\end{aligned}
\ee


\section{The Turaev--Viro invariant of a knot}
\label{sec.TV}


\subsection{Definition}
\label{sub.TV}

The Turaev-Viro invariant of a closed 3-manifold was defined in
~\cite{TV} using a triangulation of the manifold where its extension to triangulated
3-manifolds with nonempty boundary (in particular to link complements), 
was also discussed. It was noticed by Walker, Turaev and
Roberts~\cite{Roberts} independently that the Turaev-Viro invariant of a closed
3-manifold $M$ equals to the $\SU(2)$ Reshetikhin--Turaev invariant of its double
$D(M) = M \sharp -M$. This relation was extended to 3-manifolds with boundary
(where the double $D(M)=M \cup_{\partial M} -M$) by Benedetti and Petronio~\cite{BP}.
A careful extension of the above relation from $\SU(2)$ to $\SO(3)$ was given
by Chen--Yang in~\cite{CY} and by Detcherry--Kalfagianni--Yang~\cite[Thm.3.1]{DKY},
and a Volume Conjecture for them was formulated in~\cite{CY}. A comparison between
the Turaev--Viro invariants of a knot complement with the Turaev--Viro invariant of
the closed 3-manifold obtained by 0-surgery on the knot was given by
Detcherry--Kalfagianni in~\cite[Cor.5.3]{DK}. The comparison takes the form of an
inequality whose proof follows easily from the Cauchy-Schwarz inequality
of an inner product. 

In the present paper, we will focus on the Turaev--Viro invariant of a knot, which
is defined by a state-sum formula in terms of colorings of edges of an ideal
triangulation of the knot complement. Theorem 3.1 of~\cite{DKY} expresses this
state-sum in terms of the values of the colored Jones polynomial of the knot
at roots of unity as follows:
\be
\label{eq.TV}
\TV_{K,m+1/2}= 
\ev_{1/(m+1/2)} \left(
\eta_{2m+1}^2 \sum_{k=1}^m [k]^2 |J_{K,k}(q)|^2 \right)\,.
\ee
Here, $J_{K,n}(q) \in \BZ[q^{\pm 1}]$ denotes the $n$-th colored Jones polynomial
of a knot $K$ in 3-space~\cite{KR,Tu:YB}, normalised by $J_{\text{unknot},n}(q)=1$,
and such that $J_{K,2}(q)$ is the Jones polynomial~\cite{Jones} of $K$, and 
$\eta_{2m+1}=\frac{2}{\sqrt{2m+1}}\sin\left(\frac{2\pi}{2m+1}\right)$,
$\e(x)=e^{2 \pi i x}$, $[k]=(q^{k/2}-q^{-k/2})/(q^{1/2}-q^{-1/2})$ and
$\ev_{a/c}(f(q))=f(\e(a/c))$ for $a/c \in \BQ$.

We stress that in the definition of the Turaev--Viro invariants of a knot,
we choose as our variable $m+1/2$ rather than $2m+1$ for reasons that will become
clear later. 

It is obvious from~\eqref{eq.TV} that the Turaev--Viro invariant of a knot at $m+1/2$
is a finite sum of positive $m$ terms. Experimentally it appears that the dominant term
is near the end (i.e., $J_{K,m}(\e(1/(m+1/2))$) and grows exponentially with respect
to $m$ and has a full asymptotic expansion into power series in $1/(m+1/2)$.
Moreover, each term for $k=m-n$ with $n>0$ fixed and $n$ large also grows
exponentially with respect to $m$, at the same rate as the one with $n=0$, and
with subleading corrections which are power series in $1/(m+1/2)$ with coefficients
polynomials in $n$. Moreover, a symmetry of the colored Jones polynomial allows one
to double the sum over $m$ terms to one over $2m$ terms, and thus consider $n$
to be a fixed integer. One can replace the sum over $n$ by a formal Gaussian integral
and one can further experimentally observe the expansion of the asymptotics of
the Turaev--Viro invariant of a knot to all orders in $m+1/2$. The resulting series
have interesting arithmetic and analytic properties analogous to those
in~\cite{GZ:kashaev} with one key difference: they are the double of the series
of~\cite{GZ:kashaev}. 

\subsection{Numerical asymptotics}
\label{sub.41tvasy}


As an illustration of the above discussion, we will give the asymptotics of the
Turaev--Viro invariant of the simplest hyperbolic ($4_1$) knot.
Let $v=\Vol(4_1)/(2\pi)=2 \, \Im \Li_2(\e(1/6))/(2 \pi)=0.3230659472\dots$
denote the volume of the $4_1$ knot~\cite{Thurston}, divided by $2\pi$.

\begin{observation}
\label{prop.TV}
We have: 
\be
\label{TV21asy}
\TV_{4_1,m+1/2} \sim 
\frac{e^{2v(m+1/2)}}{2^{\frac{1}{2}} \cdot 3^{\frac{3}{4}}} (m+1/2)^{\frac{1}{2}}
B\left(\frac{\pi}{2^2 \cdot 3^2 \cdot \sqrt{3} \cdot (m+1/2)}\right) \,,
\ee
where $B(h)=\sum_{k=0}^\infty \frac{b_k}{k!} h^k$ with $b_k \in \BQ$ and
the first 10 coefficients are given by
\begin{center}
\def\arraystretch{1.5}
\begin{tabular}{|c|c|c|c|c|c|c|c|c|}\hline
$k$ & $0$ & $1$ & $2$ & $3$ & $4$ & $5$ & $6$   
\\ \hline
$b_k$ 
& $1$
& $19$
& $1333$
& $\frac{1601717}{5}$
& $\frac{704696117}{5}$
& $\frac{683123156521}{7}$
& $\frac{3461441912579591}{5 \cdot 7}$
\\ \hline
\end{tabular}
\end{center}
and
\begin{center}
\def\arraystretch{1.5}
\begin{tabular}{|c|c|c|c|}\hline
$k$ & $7$ & $8$ & $9$ 
\\ \hline
$b_k$ 
& $\frac{691282346978984873}{5}$
& $\frac{1274507463563873288357}{5}$
& $\frac{164770744067453335344413873}{5^2 \cdot 11}$
\\ \hline
\end{tabular}
\end{center}
\end{observation}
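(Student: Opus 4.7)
The plan is to derive the observation as a consequence of Conjecture \ref{conj.1}\eqref{main1b} specialised to $K=4_1$ and then to collapse the bilinear sum into a single $\hbar$-series by the same formal Gaussian-integration technology used in Section \ref{sub.asymer}. Concretely, the first step is to start from the state-sum expression \eqref{eq.TV} and isolate the dominant contribution by reparametrising $k=m-n$. The well-known symmetry $J_{K,k}(\e(1/(m+1/2)))$ under $k\mapsto 2m+1-k$, combined with the fact that each individual term $[k]^2|J_{4_1,k}|^2$ at $q=\e(1/(m+1/2))$ decays quickly as $k$ moves away from the upper end of the range, allows us to rewrite the sum as a sum over $n\in\BZ$ (up to exponentially small corrections) of a single meromorphic expression in $q^n$.

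The second step is to invoke the all-orders Kashaev-type asymptotic expansion of $J_{4_1,m-n}(\e(1/(m+1/2)))$. After inserting the prefactor $\eta_{2m+1}^2[m-n]^2$ and extracting the complex conjugate factor coming from $|J_{4_1,m-n}|^2$, this gives the bilinear combination
\be
\eta_{2m+1}^2[m-n]^2|J_{4_1,m-n}(q)|^2 \;\sim\;
\wh\Phi^{(\s_1)}_{4_1,n}(2\pi i/(m+1/2))\;\wh\Phi^{(\s_2)}_{4_1,n}(-2\pi i/(m+1/2))\,,
\ee
so that summing over $n\in\BZ$ reproduces the right-hand side of \eqref{main1b}. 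Using the identity \eqref{41Phi12}, i.e.\ $\wh\Phi^{(\s_2)}_n(\hbar)=i\,\wh\Phi^{(\s_1)}_n(-\hbar)$, the bilinear product becomes $i\,\wh\Phi^{(\s_1)}_n(\hbar)\wh\Phi^{(\s_1)}_n(\hbar)$ with the two volume factors combining to $\exp(2\mathrm{Vol}(4_1)/(2\pi\tau))=e^{2v(m+1/2)}$, which matches the exponential prefactor in \eqref{TV21asy}.

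The third step is purely computational: one carries out the formal Gaussian summation of \eqref{sumPhi} with $\b=2\sqrt{-3}$ using the explicit coefficients of $\wh\Phi^{(\s_1)}_n(\hbar)$ given in \eqref{41Phi7terms}. The coefficient of $\hbar^k$ in the sum is a finite combination of bracket integrals $\langle n^{2j}\rangle_{2\sqrt{-3}\hbar}$ coming from the polynomial-in-$n$ contributions up to degree $4k/3$, exactly as illustrated by the sample computations in \eqref{41hc1} and \eqref{41hc2}. Reading off the prefactor $\langle 1\rangle_{2\sqrt{-3}\hbar} = \z_8\sqrt{2\pi i/(2\sqrt{-3}\hbar)}$ gives the overall factor $2^{-1/2}3^{-3/4}(m+1/2)^{1/2}$ and the substitution $\hbar=2\pi i/(m+1/2)$ yields the argument $\pi/(2^2\cdot 3^2\cdot\sqrt{3}\cdot(m+1/2))$ in $B$; matching the first ten coefficients $b_k$ is then a finite calculation that can be carried out term-by-term from the known expansion of $\wh\Phi^{(\s_1)}_n(\hbar)$ to $O(\hbar^{28})$.

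The main obstacle is genuinely conceptual rather than computational: the two inputs used above, namely the all-orders asymptotic expansion of $J_{4_1,m-n}(\e(1/(m+1/2)))$ with polynomial-in-$n$ corrections encoded by $\wh\Phi^{(\s_1)}_n(\hbar)$, and the legitimacy of interchanging the sum over $n$ with the asymptotic expansion in $1/(m+1/2)$, are both unproven even for the figure-eight knot. The first is an enhanced form of the Volume Conjecture along the lines of \cite{GZ:kashaev}, and the second would require controlling the error terms in the Kashaev-type expansion uniformly in $n$ up to the Gaussian cutoff $|n|\lesssim\sqrt{m+1/2}$. For this reason, the result is stated as an Observation, verified numerically to ten orders, and should be regarded as a direct consequence of Conjecture \ref{conj.1} rather than as an independent theorem.
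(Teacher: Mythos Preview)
Your proposal is not wrong, but it is a genuinely different route from the paper's. The paper treats the observation as a \emph{purely numerical} discovery: it computes $\TV_{4_1,m+1/2}$ directly from~\eqref{eq.TV} using the Le--Habiro formula (or the recursion) for the colored Jones polynomial at the relevant roots of unity for $m$ in the range $1000$--$1100$, numerically extrapolates the asymptotic coefficients, and recognises each one as a rational number. No appeal to Conjecture~\ref{conj.1}, to the $\wh\Phi^{(\s)}_n$ series, or to Gaussian integration is made; the matching of the resulting coefficients with those of~\eqref{Imernum} is then presented as the empirical content of Conjecture~\ref{conj.1}, not as its input.

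Your approach instead runs the logic of the paper in reverse: you assume~\eqref{main1b} (or rather the Kashaev-type expansion with polynomial-in-$n$ corrections that underlies it), and then reduce the computation of the $b_k$ to the same Gaussian integration that the paper performs in Section~\ref{sub.merasy.41} for the meromorphic 3D-index. This is internally coherent and has the virtue of \emph{explaining} why the Turaev--Viro coefficients coincide with the 3D-index ones, but as you correctly note at the end, it rests on two unproven analytic inputs (the all-orders colored-Jones asymptotics uniform in $n$, and the interchange of the $n$-sum with the asymptotic expansion). The paper's direct numerical extrapolation avoids both of these and provides independent evidence for the conjecture rather than a consequence of it; your derivation would, if the analytic inputs were established, upgrade the observation to a corollary of the enhanced Volume Conjecture.
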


This is a numerical observation, obtained by numerical computation of the values of
the colored Jones $N$-th colored Jones polynomials at roots of unity (using either
the Le-Habiro formula for the colored Jones polynomial, or its recursion, known
to exist for all knots, and which implies that computing the $N$-th colored Jones
polynomial at a root of unity can be done so in $O(N)$-steps) for $N=1000,\dots,1100$,
to high precision, then numerically extrapolating the answer and finally recognizing
the corresponding elements of the trace field (which in our case is $\BQ(\sqrt{-3})$
hence the numbers in question are rational). A detailed description of this method
with numerous examples of asymptotics of quantum invariants is given 
in~\cite{GZ:kashaev} and in~\cite{GZ:qseries}.

Note that the growth rate of the Turaev--Viro invariant of the $4_1$ knot is not
the volume, but twice the volume. This doubling has to do with the fact that the
Turaev--Viro invariant of a knot complement is the Reshetikhin--Turaev invariant
of the doubled manifold, and matches the fact that the rotated 3D-index is a bilinear
function of the colored holomorphic blocks (see Equation~\eqref{ImerH}), and hence
its asymptotics is a bilinear (and not linear) combination of asymptotic series of
colored holomorphic blocks. 

\subsection*{Acknowledgements} 
This paper would not be possible without the generous sharing of ideas and
enlightening conversations from our friends and collaborators. Tudor Dimofte told us
about the rotated 3D-index many years ago, Jie Gu did the initial computations of
the rotated 3D-index, and Don Zagier taught us over the years the
numerical asymptotic methods and recognition of numbers which were crucial to
the numerical discoveries of our paper. In a sense, they are all co-authors of this
work. We wish to thank them all. 

In addition, we wish to thank Reneaud Detcherry and Andrew Kricker for their
encouragement. The work of C.W. has been supported by the Max-Planck-Gesellschaft.
S.G. wishes to thank the Max-Planck-Institute in Bonn and the Nanyang Technological
University in Singapore for their hospitality where the paper was completed.


\appendix

\section{The 3D-index of the $5_2$ knot}
\label{sec.52}

In this appendix, we discuss in detail the 3D-index of the $5_2$ knot and its
holomorphic blocks and colored holomorphic blocks. All functions and number in this
section depend on the knot $5_2$ which we omit from the notation.

\subsection{Holomorphic blocks}

The $5_2$ knot has three nontrivial boundary parabolic $\PSL_2(\BC)$ representations,
all Galois conjugate to the geometric one, the latter having a cubic trace field
of discriminant -23. Hence, this knot has three holomorphic blocks
$B^{(\a)}(x;q)$ for $\a=1,2,3$ (see~\cite[Eqn.6.48]{holo-blocks}
and~\cite[Sec.6.2]{GGM:peacock}) 
\be
\label{52Bx}
\begin{aligned}
B^{(1)}(x;q)
&=
\theta(x;q)G(x,x^{-1},1,q)\,,\\
B^{(2)}(x;q)
&=
\frac{\theta(x;q)}{\theta(-q^{\frac{1}{2}}x;q)}G(x,x^{2},x,q)\,,\\
B^{(3)}(x;q)
&=
\frac{\theta(x^{-1};q)}{\theta(-q^{\frac{1}{2}}x^{-1};q)}G(x^{-1},x^{-2},x^{-1},q) \,,
\end{aligned}
\ee
expressed in terms of the function
\be
G(x,y,z,q) = \Bigg\{
\begin{array}{cl}
(qx;q)_{\infty}(qy;q)_{\infty}\sum_{k=0}^{\infty}\frac{z^{k}}{(q^{-1};q^{-1})_{k}
(qx;q)_{k}(qy;q)_{k}} & \text{for }|q|<1\\
\frac{1}{(x;q^{-1})_{\infty}(y;q^{-1})_{\infty}}\sum_{k=0}^{\infty}\frac{z^{k}}
{(q^{-1};q^{-1})_{k}(qx;q)_{k}(qy;q)_{k}} & \text{for }|q|>1 \,.
\end{array}
\ee
As in the case of the $4_1$ knot, Wilf-Zeilberger theory~\cite{WZ} implemented by
Koutschan~\cite{Koutschan} implies that the holomorphic blocks satisfy
the same $q$-difference equation
\be
\label{52.Ahat}
\begin{tiny}
\begin{aligned}
P_{0}(x,q)B^{(\alpha)}(x,q)+P_{1}(x,q)B^{(\alpha)}(q^{-1}x,q)
+P_{2}(x,q)B^{(\alpha)}(q^{-2}x,q)
+P_{3}(x,q)B^{(\alpha)}(q^{-3}x,q)&=0\,,
\end{aligned}
\end{tiny}
\ee
for $\alpha=1,2,3$, where
\be
\begin{tiny}
\begin{aligned}
P_{0}(x,q)&=-q^{-2}x^2(1-q^{-2}x)(1+q^{-2}x)(1-q^{-5}x^2)\,,\\
P_{1}(x,q)&=q^{3/2}x^{-3}(1-q^{-1}x)(1+q^{-1}x)(1-q^{-5}x^2) \\
& \quad \cdot
(1-q^{-1}x-q^{-1}x^2-q^{-4}x^2+q^{-2}x^2+q^{-3}x^2+q^{-2}x^3
+q^{-5}x^3+q^{-5}x^4+q^{-5}x^4-q^{-6}x^5)\,,\\
P_{2}(x,q)&=q^{5}x^{-5}(1-q^{-2}x)(1+q^{-2}x)(1-q^{-1}x^2) \\
& \quad \cdot
(1-q^{-2}x-q^{-2}x-q^{-2}x^2-q^{-5}x^2+q^{-4}x^3+q^{-7}x^3-q^{-5}x^3
-q^{-6}x^3+q^{-7}x^4-q^{-9}x^5)\,,\\
P_{3}(x,q)&=q^{\frac{11}{2}}x^{-5}(1-q^{-1}x)(1+q^{-1}x)(1-q^{-1}x^2)\,.
\end{aligned}
\end{tiny}
\ee
Compare with~\cite[Eqn.(259)]{GGM:peacock} (with $m=0$), the slight difference
in the coefficients being due to the fact that we insist our $q$-difference equation
to be satisfied by the rotated 3D-index, as we already discussed for the case of
the $4_1$ knot.

\subsection{Colored holomorphic blocks}


The holomorphic blocks are meromorphic functions of $(x,q)$. Now we define the
colored holomorphic blocks from the expansions of the expansions of the holomorphic
blocks around $x=q^n$, or equivalently when $x=q^ne^u$, around $u=0$. 
To begin with, we define $h^{(0)}_{n}(q)$ for $|q|<1$ by
\be
B^{(1)}(q^{n}e^{u};q)
=(-q^{\frac{1}{2}};q)_{\infty}^2(q;q)_{\infty}^2h_{n}^{(0)}(q)+O(u^{1})\,.
\ee
One can then show that
\be
\begin{aligned}
B^{(2)}(q^{n}e^{u};q)
&=(-q^{\frac{1}{2}};q)_{\infty}^2h_{n}^{(0)}(q)u^{-1}+O(u^{0})\,,\\
B^{(3)}(q^{n}e^{u};q)
&=-(-q^{\frac{1}{2}};q)_{\infty}^2h_{n}^{(0)}(q)u^{-1}+O(u^{0})\,.
\end{aligned}
\ee
Next, we define $h^{(1)}_{n}(q)$ for $|q|<1$ by
\be
B^{(2)}(q^ne^u;q)+B^{(3)}(q^ne^u;q)
=-2(-q^{\frac{1}{2}};q)_{\infty}^2h_{n}^{(1)}(q)+O(u^1)\,.
\ee
Then, one can show that
\be
-B^{(1)}(q^ne^u;q)\frac{\wp'(e^u,q)}{2\wp(e^u,q)(q;q)_{\infty}^2}-B^{(2)}(q^ne^u;q)
=(-q^{\frac{1}{2}};q)_{\infty}^2h_{n}^{(1)}(q)+O(u^1) \,,
\ee
where $\wp$ is the Weierstrass $\wp$-function. 
Last, we define $h^{(2)}_{n}(q)$ for $|q|<1$ by
\be
-B^{(1)}(q^ne^u;q)\frac{\wp'(e^u,q)}{\wp(e^u,q)(q;q)_{\infty}^2}
-B^{(2)}(q^ne^u;q)+B^{(3)}(q^ne^u;q)
= (-q^{\frac{1}{2}};q)_{\infty}^2h_{n}^{(2)}(q)u+O(u^2)\,.
\ee

Next we define the colored holomorphic blocks when $|q|>1$. 
We begin by defining $h^{(0)}_{n}(q^{-1})$ for $|q|<1$ by
\be
-B^{(1)}(q^{-n}e^{-u};q^{-1})=
\frac{1}{(-q^{\frac{1}{2}};q)_{\infty}^2(q;q)_{\infty}^2}
h_{n}^{(0)}(q)u^{-2}+O(u^{-1})\,.
\ee
One can show that
\be
\begin{aligned}
-B^{(2)}(q^{-n}e^{-u};q^{-1})
&=\frac{1}{(-q^{\frac{1}{2}};q)_{\infty}^2}h_{n}^{(0)}(q)u^{-1}+O(u^{0})\,,\\
-B^{(3)}(q^{-n}e^{-u};q^{-1})
&=-\frac{1}{(-q^{\frac{1}{2}};q)_{\infty}^2}h_{n}^{(0)}(q)u^{-1}+O(u^{0})\,.
\end{aligned}
\ee
Then, we define $h^{(1)}_{n}(q^{-1})$ for $|q|<1$ by
\be
B^{(2)}(q^{-n}e^{-u};q)+B^{(3)}(q^{-n}e^{-u};q)
=
\frac{1}{(-q^{\frac{1}{2}};q)_{\infty}^2}h^{(1)}_{n}(q)+O(u^1)\,.
\ee
One can show
\be
-2B^{(1)}(q^{-n}e^{-u};q)
\frac{\wp(e^u;q)(q;q)_{\infty}^2}{\wp'(e^u;q)}-2B^{(2)}(q^{-n}e^{-u};q)
=
 \frac{1}{(-q^{\frac{1}{2}};q)_{\infty}^2}h^{(1)}_{n}(q)+O(u^1)\,.
\ee
Finally, we define $h^{(2)}_{n}(q^{-1})$ for $|q|<1$ by
\be
\begin{small}
\begin{aligned}
2B^{(1)}(q^{-n}e^{-u};q)\frac{\wp(e^u;q)(q;q)_{\infty}^2}{\wp'(e^u;q)}+B^{(2)}
(q^{-n}e^{-u};q)-B^{(3)}(q^{-n}e^{-u};q)
=
\frac{h^{(2)}_{n}(q)}{(-q^{\frac{1}{2}};q)_{\infty}^2}u+O(u^2)\,.
\end{aligned}
\end{small}
\ee
Since the holomorphic blocks $B^{(\a)}(x;q)$ satisfy the $q$-difference
equation~\eqref{52.Ahat}, it follows that the colored holomorphic blocks, given
in~\eqref{52.h0},~\eqref{52.h1}, and~\eqref{52.h2}, satisfy the linear
$q$-difference equation
\be
\label{52.Ahat.colored}
\begin{aligned}
P_{0}(q^n,q)h^{(\alpha)}_{n}(q)+P_{1}(q^n,q)h^{(\alpha)}_{n-1}(q)
+P_{2}(q^n,q)h^{(\alpha)}_{n-2}(q)
+P_{3}(q^n,q)h^{(\alpha)}_{n-3}(q)&=0\,,
\end{aligned}
\ee
for all $\a=1,2,4$ and all integers $n$.

Our next lemma gives explicit $q$-hypergeometric sums for the colored holomorphic
blocks of the $5_2$ knot. Although the derivation of these sums is rigourous, albeit
routine, the formulas are lengthy. To avoid clutter, we will introduce some notation
for $q$-harmonic sums. The standard notation $H_n=\sum_{j=1}^n 1/j$ for harmonic
numbers,
and its extension $H_n^{(k)}=\sum_{j=1}^n 1/j^k$ leads to several
recent $q$-generalisations whose notation is not standard, see e.g.,
Singer~\cite{Singer:qMZV}. In our formulas for the colored holomorphic blocks
of the $5_2$ knot, we will use
\be
H_n(q) = \sum_{j=1}^n \frac{q^j}{1-q^j}, \qquad
H^{(2)}_n(q) = \sum_{j=1}^n \frac{q^j}{(1-q^j)^2} \,.
\ee
\begin{lemma}
\label{lem.52h}
We have:
\be
\label{52.h0}
\begin{aligned}
h_{n}^{(0)}(q)
&=
(-1)^nq^{|n|/2}\sum_{k=0}^{\infty}
\frac{q^{|n|k}}{(q^{-1};q^{-1})_{k}(q;q)_{k+2|n|}(q;q)_{k+|n|}}\,,
\end{aligned}
\ee
\be\label{52.h1}
\begin{aligned}
h^{(1)}_n(q)
&=
-(-1)^nq^{|n|/2}\sum_{k=0}^{\infty}
\frac{q^{|n|k}}{(q;q)_{k+2|n|}(q^{-1};q^{-1})_{k}(q;q)_{k+|n|}}\\
&\qquad\times\left(k+|n|-\frac{1}{4}-3E_{1}(q)
  +H_{k}(q)
  +H_{k+|n|}(q)
  +H_{k+2|n|}(q)\right)\\
&\quad+q^{-n^2/2}\sum_{k=0}^{|n|-1}
\frac{(q^{-1},q^{-1})_{|n|-1-k}}{(q^{-1},q^{-1})_{k}(q;q)_{k+|n|}}\,,
\end{aligned}
\ee
and
\be\label{52.h2}
\begin{aligned}
h^{(2)}_{n}(q)
&=
(-1)^nq^{|n|/2}\sum_{k=0}^{\infty}
\frac{q^{|n|k}}{(q^{-1};q^{-1})_{k}(q;q)_{k+|n|}(q;q)_{k+2|n|}}\\
&\qquad\times\Bigg(
E_{2}(q)+\frac{1}{8}
-H_{k}^{(2)}(q)
-H_{k+|n|}^{(2)}(q)
-H_{k+2|n|}^{(2)}(q)\\
&\qquad\qquad-\bigg(k+|n|-\frac{1}{4}-3E_{1}(q)
+H_{k}(q)
+H_{k+|n|}(q)
+H_{k+2|n|}(q)\bigg)^2\Bigg)\\
&+
2q^{-n^2/2}\sum_{k=0}^{|n|-1}
\frac{(q^{-1},q^{-1})_{|n|-1-k}}{(q^{-1},q^{-1})_{k}(q;q)_{k+|n|}}\\
&\qquad\times\Bigg(|n|-\frac{3}{4}-3E_{1}(q)
+H_{k}(q)
+H_{k+|n|}(q)
+H_{|n|-k-1}(q)\Bigg)\\
&-
2(-1)^{n}q^{-|n|/2}\sum_{k=0}^{|n|-1}q^{-|n|k}
\frac{(q^{-1};q^{-1})_{2|n|-k-1}(q^{-1};q^{-1})_{|n|-k-1}}{(q^{-1};q^{-1})_{k}}\,,
\end{aligned}
\ee
for $|q| \neq 1$.
The colored holomorphic blocks satisfy the symmetries
\be
\label{52hsym2}
\hb_{-n}^{(\a)}(q) = \hb_{n}^{(\a)}(q), \qquad \a=0,1,2 \,.
\ee
\end{lemma}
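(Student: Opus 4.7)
The plan is to mimic the strategy used for the $4_1$ knot in Proposition \ref{prop.41h}: start from the explicit $q$-hypergeometric formulas \eqref{52Bx} for the three holomorphic blocks $B^{(\alpha)}(x;q)$, substitute $x = q^n e^u$ (for $|q|<1$) or $x = q^{-n}e^{-u}$ (for $|q|>1$), expand in $u$ using Lemma \ref{lem.epexpansions}, and read off the colored holomorphic blocks by extracting the coefficients of $u^{-j}, u^0, u^1, u^2$ prescribed in the defining expansions immediately preceding the statement. The three building blocks in \eqref{52Bx} are theta functions (whose expansion at $x=q^ne^u$ is standard and produces factors of $q^{n^2/2}$ together with an $O(u)$ correction involving $E_1(q)$), ratios of theta functions (handled identically), and the hypergeometric function $G(x,y,z;q)$ whose $x,y$ dependence enters through Pochhammer ratios to which Lemma \ref{lem.epexpansions} applies term by term in the $k$-summation.

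The key analytic input is Lemma \ref{lem.epexpansions}, whose two formulas translate $u$-expansions of the shifted Pochhammer ratios into closed-form exponentials involving $E_\ell(q)$ and partial polylogarithm sums $\sum_{j=1}^{m}\Li_{1-\ell}(q^{\pm j})$. Since $\Li_0(q^j)=q^j/(1-q^j)$ and $\Li_{-1}(q^j)=q^j/(1-q^j)^2$, these collapse to the $q$-harmonic sums $H_k(q)$ and $H^{(2)}_k(q)$, thereby producing the harmonic structure visible in \eqref{52.h1} and \eqref{52.h2}. The case split $n\ge 0$ versus $n<0$ must be tracked carefully: for $n\ge 0$ every shifted Pochhammer $(q^{n+1+\cdots}e^{\pm u};q)_\bullet$ is analytic at $u=0$, while for $n<0$ one must first peel off the singular factors $(1-q^{n+k}e^{\pm u})$ with $n+k\le 0$, rewriting them as $(q^{-1};q^{-1})$-finite products before applying Lemma \ref{lem.epexpansions} to the regular remainder; these are precisely the finite tail sums $\sum_{k=0}^{|n|-1}$ visible in the second lines of \eqref{52.h1} and \eqref{52.h2}, and the uniform appearance of $|n|$ rather than $n$ in the final formulas encodes this case split.

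The calculation then splits into three pieces of increasing complexity, one per $\alpha$. For $h^{(0)}_n$ only the $u^{-1}$ or $u^0$ leading term of $B^{(\alpha)}$ is needed; only the Gaussian factor $q^{n^2/2}$ and constant-in-$u$ truncations are required, and the $(-1)^n q^{|n|/2}$ prefactor emerges from combining $q^{n^2/2}$ with reflected $(q^{-1};q^{-1})$ factors. For $h^{(1)}_n$ one extracts the $u^0$ coefficient of $B^{(2)}+B^{(3)}$ (or equivalently of $-B^{(1)}\wp'/(2\wp(q;q)_\infty^2)-B^{(2)}$): the first-order expansion from Lemma \ref{lem.epexpansions} produces the $H_k(q)$ sums and the $-3E_1(q)$ shift, while the $k+|n|-\tfrac{1}{4}$ term comes from the derivative of $q^{k(k+1)/2}$-type prefactors and the $\sqrt{-u/(1-e^u)}$ Jacobian of the Lemma. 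For $h^{(2)}_n$ one extracts the $O(u)$ coefficient of $-B^{(1)}\wp'/(\wp(q;q)_\infty^2)-B^{(2)}+B^{(3)}$; here both quadratic cross-terms of first-order expansions and the genuinely second-order contributions of Lemma \ref{lem.epexpansions} come into play, generating the $E_2(q)$ constant, the $-H^{(2)}_k(q)$ pieces, and the squared harmonic combination $-(k+|n|-\tfrac{1}{4}-3E_1(q)+\cdots)^2$.

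The main obstacle will be this last calculation for $h^{(2)}_n$: the three-block combination is engineered to cancel a double pole in $u$, so one must expand each of $B^{(1)}, B^{(2)}, B^{(3)}$ together with $\wp'/\wp$ to two orders and verify that the polar terms cancel before reading off the $O(u)$ coefficient. The resulting expression must then be reorganised to match the particular quadratic-plus-$H^{(2)}$ form of \eqref{52.h2}, a nontrivial algebraic rearrangement using identities between $(H_k(q))^2$ and $H^{(2)}_k(q)$. The $|q|>1$ formulas are obtained by entirely parallel expansions using the second formula of Lemma \ref{lem.epexpansions}. Finally, the symmetry $h^{(\alpha)}_{-n}(q)=h^{(\alpha)}_n(q)$ follows from the Weyl symmetries $B^{(1)}(x^{-1};q)=B^{(1)}(x;q)$ and $B^{(2)}(x^{-1};q)=B^{(3)}(x;q)$ (both manifest from \eqref{52Bx} since $\theta(x;q)$ is $x\leftrightarrow x^{-1}$ invariant and $G(x,y,z;q)$ is symmetric in $x,y$), applied to each defining combination together with the change of variables $u\mapsto -u$.
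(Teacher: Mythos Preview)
Your proposal is correct and follows essentially the same approach as the paper. The paper does not write out a separate proof for Lemma~\ref{lem.52h}, but the intended method is precisely the one given in the proof of the analogous Proposition~\ref{prop.41h} for the $4_1$ knot: substitute $x=q^n e^u$ in the explicit block formulas~\eqref{52Bx}, expand each Pochhammer and theta factor in $u$ via Lemma~\ref{lem.epexpansions}, and read off the prescribed Laurent coefficients; your account of how the $E_\ell(q)$, $H_k(q)$, $H_k^{(2)}(q)$ terms and the finite $\sum_{k=0}^{|n|-1}$ tails arise, and of the Weyl-symmetry argument for~\eqref{52hsym2}, matches this exactly.
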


The first few values of the colored holomorphic blocks are given by
\be
\begin{tiny}
\begin{aligned}
h_{0}^{(0)}(q)
&=
1 - q - 3q^2 - 5q^3 - 7q^4 - 6q^5 - 3q^6 + 8q^7 + 24q^8 + 52q^9 + \dots\,,\\
h_{1}^{(0)}(q)
&=
- q^{\frac{1}{2}} - 2q^{\frac{3}{2}} - 3q^{\frac{5}{2}} - 3q^{\frac{7}{2}} - 
q^{\frac{9}{2}} + 4q^{\frac{11}{2}} + 14q^{\frac{13}{2}} + 29q^{\frac{15}{2}}
+ 51q^{\frac{17}{2}} + \dots\,,\\
h_{2}^{(0)}(q)
&=
q + 2q^2 + 5q^3 + 8q^4 + 14q^5 + 19q^6 + 26q^7 + 29q^8 + 30q^9 + \dots\,,\\
\end{aligned}
\end{tiny}
\ee
\be
\begin{tiny}
\begin{aligned}
h_{0}^{(1)}(q)
&=-\frac{1}{2} + \frac{9}{2}q + \frac{21}{2}q^2 + \frac{19}{2}q^3 + \frac{9}{2}q^4 
- 27q^5 - \frac{133}{2}q^6 - 156q^7 - 252q^8 - 384q^9+\dots\,,\\
h_{1}^{(1)}(q)
&=q^{-\frac{1}{2}} + \frac{5}{2}q^{\frac{1}{2}} + 3q^{\frac{3}{2}}
- \frac{1}{2}q^{\frac{5}{2}}
 - \frac{23}{2}q^{\frac{7}{2}} - \frac{73}{2}q^{\frac{9}{2}} - 74q^{\frac{11}{2}} - 
 133q^{\frac{13}{2}} - \frac{393}{2}q^{\frac{15}{2}}
 - \frac{529}{2}q^{\frac{17}{2}}+\dots\,,\\
h_{2}^{(1)}(q)
&=-q^{-3} - 2q^{-1} - 2 - \frac{15}{2}q - 11q^2 - \frac{41}{2}q^3 - 23q^4 - 27q^5 - 
\frac{19}{2}q^6 + 30q^7 + \frac{237}{2}q^8 + 272q^9+\dots\,,\\
\end{aligned}
\end{tiny}
\ee
\be
\begin{tiny}
\begin{aligned}
h_{0}^{(2)}(q)
&=-\frac{1}{6} + \frac{37}{6}q + \frac{17}{2}q^2 - \frac{115}{6}q^3
- \frac{389}{6}q^4 - 
181q^5 - \frac{579}{2}q^6 - \frac{1414}{3}q^7 - 548q^8 - \frac{1418}{3}q^9 +\dots\,,\\
h_{1}^{(2)}(q)
&=-2q^{-\frac{3}{2}} + 4q^{-\frac{1}{2}} + \frac{1}{6}q^{\frac{1}{2}}
- \frac{29}{3}q^{\frac{3}{2}}
- \frac{59}{2}q^{\frac{5}{2}} - \frac{139}{2}q^{\frac{7}{2}}
- \frac{755}{6}q^{\frac{9}{2}}
- \frac{560}{3}q^{\frac{11}{2}} - \frac{673}{3}q^{\frac{13}{2}}
- \frac{941}{6}q^{\frac{15}{2}}+\dots\,,\\
h_{2}^{(2)}(q)
&=-2q^{-8} + 4q^{-7} - 2q^{-4} - 6q^{-3} + 6q^{-2} - 4q^{-1} + 4
+ \frac{11}{6}q+\dots\,,\\
\end{aligned}
\end{tiny}
\ee
\be
\begin{tiny}
\begin{aligned}
h_{0}^{(0)}(q^{-1})
&=1 + q^2 + 3q^3 + 6q^4 + 10q^5 + 16q^6 + 24q^7 + 37q^8 + 55q^9 +\dots\,,\\
h_{1}^{(0)}(q^{-1})
&=+q^{\frac{7}{2}} + 2q^{\frac{9}{2}} + 4q^{\frac{11}{2}} + 6q^{\frac{13}{2}}
+ 10q^{\frac{15}{2}}
+ 15q^{\frac{17}{2}} + 24q^{\frac{19}{2}} + 37q^{\frac{21}{2}} + 58q^{\frac{23}{2}}
+ 88q^{\frac{25}{2}}+\dots\,,\\
h_{2}^{(0)}(q^{-1})
&=q^{12} + 2q^{13} + 5q^{14} + 9q^{15} + 17q^{16} + 27q^{17} + 45q^{18} + 68q^{19}
+ 105q^{20}
 + 154q^{21}+\dots\,,\\
\end{aligned}
\end{tiny}
\ee
\be
\begin{tiny}
\begin{aligned}
h_{0}^{(1)}(q^{-1})
&=1 - 3q - 3q^2 + 3q^3 + 6q^4 + 12q^5 + 5q^6 + 3q^7 - 12q^8 - 25q^9 +\dots\,,\\
h_{1}^{(1)}(q^{-1})
&=-q^{\frac{3}{2}} - q^{\frac{5}{2}} + 2q^{\frac{7}{2}} + 4q^{\frac{9}{2}}
+ 6q^{\frac{11}{2}}
+ 3q^{\frac{13}{2}} - q^{\frac{15}{2}} - 9q^{\frac{17}{2}} - 18q^{\frac{19}{2}}
- 23q^{\frac{21}{2}}+\dots\,,\\
h_{2}^{(1)}(q^{-1})
&=q^5 + q^7 - q^8 - q^9 - 4q^{10} - 6q^{11} - 6q^{12} - 6q^{13} - 2q^{14}+\dots\\
\end{aligned}
\end{tiny}
\ee
\be
\begin{tiny}
\begin{aligned}
h_{0}^{(2)}(q^{-1})
&=-\frac{5}{6} + 5q - \frac{53}{6}q^2 - \frac{117}{2}q^3 - 117q^4 - \frac{601}{3}q^5 - 
\frac{865}{3}q^6 - 449q^7 - \frac{4523}{6}q^8 - \frac{7547}{6}q^9 +\dots\,,\\
h_{1}^{(2)}(q^{-1})
&=2q^{\frac{1}{2}} +q^{\frac{3}{2}} -q^{\frac{5}{2}} - \frac{119}{6}q^{\frac{7}{2}}
 - \frac{107}{3}q^{\frac{9}{2}} - \frac{172}{3}q^{\frac{11}{2}} - 62q^{\frac{13}{2}}
 - \frac{304}{3}q^{\frac{15}{2}} - \frac{349}{2}q^{\frac{17}{2}}
 - 370q^{\frac{19}{2}}+\dots\,,\\
h_{2}^{(2)}(q^{-1})
&=-2q + 4q^2 - 2q^3 - 2q^4 - 5q^5 + 2q^6 + 5q^7 + 15q^8 + 25q^9 +\dots\,.
\end{aligned}
\end{tiny}
\ee

The colored holomorphic blocks have $q$ and $q^{-1}$-degree 

\be
\begin{aligned}
\deg_q \hb^{(0)}_{n}(q)&=|n|/2\,,\qquad&
\deg_q \hb^{(0)}_{n}(q^{-1})&=|n|(5|n|+2)/2\,,\\
\deg_q \hb^{(1)}_{n}(q)&=-|n|(2|n|-1)/2\,,\qquad&
\deg_q \hb^{(1)}_{n}(q^{-1})&=|n|(2|n|+1)\,,\\
\deg_q \hb^{(2)}_{n}(q)&=-|n|(5|n|-2)/2\,,\qquad&
\deg_q \hb^{(2)}_{n}(q^{-1})&=|n|/2\,.
\end{aligned}
\ee

This can easily be deduced from Equations~\eqref{52.h0},~\eqref{52.h1},
and~\eqref{52.h2}.

\subsection{The rotated and the meromorphic 3D-index}

We next express the rotated 3D-index in terms of the colored holomorphic blocks.

\begin{proposition}
\label{prop.52Irot}
For all integers $n$ and $n'$ we have:
\be
\label{52Irot}  
\Irot(n,n')(q)
=
-\frac{1}{2}h_{n'}^{(0)}(q^{-1})h_{n}^{(2)}(q)-h_{n'}^{(1)}(q^{-1})h_{n}^{(1)}(q)
-\frac{1}{2}h_{n'}^{(2)}(q^{-1})h_{n}^{(0)}(q)\,.
\ee
\end{proposition}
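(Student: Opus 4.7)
The strategy mirrors the proof of Proposition~\ref{prop.41Irot} but with three holomorphic blocks rather than two. The starting point is equation~\eqref{IrotB}, which after the substitution $x = e^u$ reads
\be
\Irot(n,n')(q) = \lim_{u \to 0} \sum_{\a=1}^{3} B^{(\a)}(q^{-n'}e^{-u};q^{-1})\, B^{(\a)}(q^n e^u;q).
\ee
The first step is to collect the six Laurent expansions in $u$ around $u=0$ that are implicit in the definitions of $h^{(0)}_n(q), h^{(1)}_n(q), h^{(2)}_n(q)$ given in Section~\ref{sec.52}. For the $|q|<1$ factor, $B^{(1)}(q^n e^u;q)$ is regular with leading coefficient proportional to $h_n^{(0)}(q)$, while $B^{(2)}$ and $B^{(3)}$ have simple poles at $u=0$ with opposite-signed leading coefficients also proportional to $h_n^{(0)}(q)$; the defining combinations involving the Weierstrass function $\wp$ then encode the subleading coefficients in terms of $h^{(1)}_n(q)$ and $h^{(2)}_n(q)$. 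For the $|q|>1$ factor, $B^{(1)}(q^{-n'}e^{-u};q^{-1})$ carries a double pole while $B^{(2)}, B^{(3)}$ carry simple poles, and the analogous defining relations express their expansions in terms of $h^{(0)}_{n'}(q), h^{(1)}_{n'}(q), h^{(2)}_{n'}(q)$.

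Second, I will package these expansions into two $3\times 3$ transition matrices $T_{\pm}(u;q)$ so that
\be
\begin{pmatrix} B^{(1)}(q^n e^u;q) \\ B^{(2)}(q^n e^u;q) \\ B^{(3)}(q^n e^u;q) \end{pmatrix}
= T_+(u;q)\begin{pmatrix} h^{(0)}_n(q) \\ h^{(1)}_n(q) \\ h^{(2)}_n(q) \end{pmatrix} + O(u^2),
\ee
and similarly for the second factor with $T_-(u;q)$. The sum over $\a$ becomes a bilinear form in $(h^{(i)}_{n'}(q^{-1}))_{i=0,1,2}$ and $(h^{(i)}_n(q))_{i=0,1,2}$ with coefficient matrix the constant-in-$u$ part of $T_-(u)^{\mathsf{T}} T_+(u)$. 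I then verify that the singular parts cancel: the $u^{-2}$ pole arises only from $B^{(1)}\cdot B^{(1)}$, and the $u^{-1}$ poles come from $B^{(1)}\cdot B^{(1)}$ together with $B^{(2)}\cdot B^{(2)}+B^{(3)}\cdot B^{(3)}$; the cancellations are enforced by the opposite-sign relation between the leading coefficients of $B^{(2)}$ and $B^{(3)}$, and by the even/odd symmetry of $\wp(e^u;q)$ and $\wp'(e^u;q)$ around $u=0$.

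After these cancellations, the residual constant term equals the claimed expression
\be
-\tfrac{1}{2}h_{n'}^{(0)}(q^{-1})h_{n}^{(2)}(q) - h_{n'}^{(1)}(q^{-1})h_{n}^{(1)}(q) - \tfrac{1}{2}h_{n'}^{(2)}(q^{-1})h_{n}^{(0)}(q),
\ee
where the anti-diagonal structure reflects the fact that the $u$-expansion orders contributed by the two factors must sum to zero. The three surviving contributions are natural: the outer terms come from pairing the $u^{-2}$ coefficient on one side with the $u^{2}$ coefficient on the other, and the middle term comes from the diagonal $B^{(2)}\cdot B^{(2)}+B^{(3)}\cdot B^{(3)}$ combination at order $u^0\cdot u^0$ plus the $u^{-1}\cdot u^{1}$ cross pieces.

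The main obstacle will be the bookkeeping of nine products expanded to $O(u^2)$ and the careful handling of the $\wp$-dependent pieces in the definitions of $h^{(1)}_n$ and $h^{(2)}_n$, which must be traded back and forth with subleading Taylor coefficients of the individual $B^{(\a)}$. Once the matrix formulation is set up, the verification reduces to checking that the constant-in-$u$ part of $T_-(u)^{\mathsf{T}} T_+(u)$ equals $\mathrm{antidiag}(-\tfrac{1}{2}, -1, -\tfrac{1}{2})$, a finite computation that parallels (and generalises) the two-block identity proved for the figure-eight knot.
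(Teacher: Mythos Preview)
Your approach is essentially the paper's own: start from equation~\eqref{IrotB}, expand each $B^{(\a)}$ around $u=0$ using the definitions of the colored holomorphic blocks in Section~\ref{sec.52}, and extract the constant term of the sum. The paper compresses this into a single displayed equation; your transition-matrix packaging $T_-(u)^{\mathsf{T}}T_+(u)$ is a sensible way to organise the same bookkeeping.

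One correction to your narrative: the $u^{-2}$ singularity does \emph{not} arise only from $B^{(1)}\!\cdot B^{(1)}$. On the $|q|>1$ side $B^{(1)}$ has a double pole while $B^{(2)},B^{(3)}$ have simple poles, and on the $|q|<1$ side $B^{(1)}$ is regular while $B^{(2)},B^{(3)}$ again have simple poles; hence all three products $B^{(\a)}\!\cdot B^{(\a)}$ contribute at order $u^{-2}$, and the cancellation is a genuinely three-term identity among the leading coefficients (all proportional to $h_{n'}^{(0)}(q^{-1})h_n^{(0)}(q)$). Likewise your description of the $u^{-1}$ cancellation and of where the surviving constant terms come from is slightly off for the same reason. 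This does not affect the validity of the method—once you actually compute the constant part of $T_-^{\mathsf{T}}T_+$ you will see the correct pattern—but you should adjust the heuristic explanation accordingly.
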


\begin{proof}
The rotated 3D-index is given by the limit $x \to 1$ in Equation~\eqref{IrotB}.
On the other hand, the expansions of the holomorphic blocks in terms of coloured
holomorphic blocks, imply that
\be
\begin{tiny}
\begin{aligned}
&B^{(1)}(q^{-n'}e^{-u};q^{-1})B^{(1)}(q^{n}e^u;q)+B^{(2)}(q^{-n'}e^{-u};q^{-1})
B^{(2)}(q^{n}e^u;q)+B^{(3)}(q^{-n'}e^{-u};q^{-1})B^{(3)}(q^{n}e^u;q)\\
&=
-\frac{1}{2}h_{n'}^{(0)}(q^{-1})h_{n}^{(2)}(q)-h_{n'}^{(1)}(q^{-1})h_{n}^{(1)}(q)
-\frac{1}{2}h_{n'}^{(2)}(q^{-1})h_{n}^{(2)}(q)+O(u)\,.
\end{aligned}
\end{tiny}
\ee
The result follows. 
\end{proof}

Using Equations~\eqref{52.h0}, \eqref{52.h1}, \eqref{52.h2} and the above proposition,
it follows that the first few values of the rotated 3D-index 
$\Irot(n,n)(q)$
are given by
\be
\begin{small}
\begin{aligned}
\Irot(0,0)(q)&=
1 - 12q + 3q^2 + 74q^3 + 90q^4 + 33q^5 - 288q^6 - 684q^7 - 1095q^8 - 1140q^9 +\dots\,,
\\
\Irot(1,1)(q)&=
2q + 7q^2 + 7q^3 - 13q^4 - 68q^5 - 154q^6 - 220q^7 - 165q^8 + 157q^9 + 898q^{10} + \dots\,,
\\
\Irot(2,2)(q)&=
2q^2 + 6q^4 + 2q^5 + 17q^6 + 14q^7 + 36q^8 + 21q^9 + 6q^{10} - 110q^{11} + \dots\,,
\end{aligned}
\end{small}
\ee
and their degree is given by $\deg_q \Irot(n,n)(q)=|n|$. This, together with 
Equation~\eqref{ImerIrot} gives the first few terms of the meromorphic 3D-index
as stated in Equation~\eqref{52.mer}.

\subsection{Quadratic relations}

We now discuss a new phenomenon of the coloured holomorphic blocks of the $5_2$
knot, which was trivial for the case of the amphichiral knot $4_1$, namely
quadratic relations among the colored holomorphic blocks. These relations were
originally discovered in~\cite{GZ:qseries}, and interpreted in terms of a duality
statement of a $q$-holonomic module by the authors in~\cite{GW:qmod}.
These quadratic relations for $5_2$ are
\be
\label{52quad}
h_{n'}^{(0)}(q^{-1})h_{n}^{(2)}(q) - 2h_{n'}^{(1)}(q^{-1})h_{n}^{(1)}(q)+
h_{n'}^{(2)}(q^{-1})h_{n}^{(0)}(q) = 0 \qquad (\text{if} \,\,\, n=n') \,,
\ee
else the left hand-side is in $\BZ[q^{\pm 1/2}]$ if $n \neq n'$.
For instance, if $H_3(q) = \begin{tiny}\begin{pmatrix}
h_{0}^{(0)}(q) & h_{0}^{(1)}(q) & h_{0}^{(2)}(q)\\
h_{1}^{(0)}(q) & h_{1}^{(1)}(q) & h_{1}^{(2)}(q)\\
h_{2}^{(0)}(q) & h_{2}^{(1)}(q) & h_{2}^{(2)}(q)
\end{pmatrix} \end{tiny}$, then we have
\be
\label{52quad2}
\begin{tiny}
\begin{aligned}
H_3(q)
\begin{pmatrix}
0 & 0 & -1\\
0 & 2 & 0\\
-1 & 0 & 0
\end{pmatrix}
H_3(q^{-1})^t = & \\ & \hspace{-5cm} 
\begin{pmatrix}
0 & -2q^{\frac{1}{2}} + 2q^{\frac{3}{2}} & 2q - 6q^2 + 2q^4 + 2q^5 + 2q^6 - 4q^7 
 + 2q^8\\
2q^{-\frac{3}{2}} - 2q^{-\frac{1}{2}} & 
0 & -2q^{\frac{3}{2}} + 2q^{\frac{9}{2}}\\
2q^{-8} - 4q^{-7} + 2q^{-6} + 2q^{-5} +
 2q^{-4} - 6q^{-2} + 2q^{-1} & 2q^{-\frac{9}{2}} - 2q^{-\frac{3}{2}} & 0
\end{pmatrix}\,.
\end{aligned}
\end{tiny}
\ee
The above relations together with Proposition~\ref{prop.52Irot} imply that
\be
\Irot(n,n)(q)
=
-2h_{n}^{(1)}(q^{-1})h_{n}^{(1)}(q)\,.
\ee

\subsection{Asymptotic series}
\label{sub.52asy}

Applying the discrete WKB ansatz~\eqref{Ahat.ansatz} to the linear $q$-difference
equation~\eqref{52.Ahat.colored}, we find that $a_{2,-1}$ satisfies the polynomial
equation
\be
-32a_{2,-1}^3 - 112a_{2,-1}^2 - 120a_{2,-1} - 68=0\,.
\ee
It follows that $a_{2,-1}=-\tfrac{3}{2}+\xi$, where
\be
\label{52xi}
\xi^3-\xi^2+1=0
\ee
generates the trace field of $5_2$, the cubic field of discriminant -23.
Using the initial condition $\Phi_{0}(\hbar)$ from~\cite{GZ:kashaev}
(see also~\cite{GGM}), we find that for
\be
\delta=3\xi-2\,,
\ee
we have,
\be
\label{52Phinh}
\begin{tiny}
\begin{aligned}
\widehat{\Phi}^{(\s_1)}_{n}(\hbar)
&  =\;\frac{1}{\sqrt{\delta}} e^{\frac{V^{(\s_1)}}{\hbar}}
\exp\left(\left(-\frac{3}{2}+\xi\right)n^2\hbar\right) \\
& \times\Bigg(1 + \bigg(-\frac{33}{2116}\xi^2 - \frac{121}{1058}\xi
+ \frac{245}{2116}\bigg)\hbar\\
&\quad + \bigg(\Big(-\frac{7}{46}\xi^2 + \frac{7}{46}\xi - \frac{1}{23}\Big)n^2
+ \Big(\frac{10025}{389344}\xi^2 - \frac{12643}{389344}\xi
+ \frac{683}{97336}\Big)\bigg)\hbar^2\\
&\quad + \bigg(\Big(-\frac{5}{276}\xi^2 + \frac{17}{276}\xi - \frac{1}{69}\Big)n^4
+ \Big(-\frac{5557}{292008}\xi^2 + \frac{2609}{146004}\xi - \frac{1984}{36501}\Big)n^2\\
&\qquad + \Big(\frac{50198891}{12357778560}\xi^2 - \frac{3544387}{1235777856}\xi
+ \frac{6584729}{1029814880}\bigg)\hbar^3\\
&\quad + \bigg(\Big(-\frac{251}{8464}\xi^2 + \frac{67}{1587}\xi
- \frac{103}{3174}\Big)n^4 + \Big(\frac{175913}{26864736}\xi^2
- \frac{71191}{17909824}\xi + \frac{141915}{17909824}\Big)n^2\\
&\qquad + \Big(-\frac{952485893}{1136915627520}\xi^2
+ \frac{1861268771}{4547662510080}\xi + \frac{203137333}{909532502016}\Big)\bigg)\hbar^4
\end{aligned}
\end{tiny}
\ee
\be
\notag
\begin{tiny}
\begin{aligned}
&\quad + \bigg(\Big(\frac{259}{38088}\xi^2 - \frac{773}{47610}\xi
+ \frac{2209}{190440}\Big)n^6 + \Big(-\frac{4897763}{80594208}\xi^2
+ \frac{30391187}{322376832}\xi - \frac{13140721}{322376832}\Big)n^4\\
&\qquad + \Big(\frac{24639499193}{852686720640}\xi^2
- \frac{78232585819}{1705373441280}\xi + \frac{3754956391}{213171680160}\Big)n^2\\
&\qquad + \Big(-\frac{12690681719899}{4393041984737280}\xi^2
+ \frac{32954346270143}{8786083969474560}\xi
- \frac{3038166526261}{2196520992368640}\Big)\bigg)\hbar^5\\
&\quad + \bigg(\Big(\frac{1}{828}\xi^2 - \frac{7}{6624}\xi + \frac{7}{6624}\Big)n^8
+ \Big(\frac{80599}{3504096}\xi^2 - \frac{76921}{3504096}\xi
+ \frac{4732}{182505}\Big)n^6\\
&\qquad + \Big(-\frac{5845444967}{74146671360}\xi^2
+ \frac{12028059677}{148293342720}\xi - \frac{8174917009}{74146671360}\Big)n^4\\
&\qquad + \Big(\frac{208659394649}{5457195012096}\xi^2
- \frac{1077794714231}{27285975060480}\xi + \frac{1449480552719}{27285975060480}\Big)n^2\\
&\qquad + \Big(-\frac{8106425420368231}{2694399083972198400}\xi^2
+ \frac{25148935902272269}{8083197251916595200}\xi
- \frac{4471134896235023}{898133027990732800}\Big)\bigg)\hbar^6 + O(\hbar^7)\Bigg) \,.
\end{aligned}
\end{tiny}
\ee
Equation~\eqref{52Phinh} above implies that for $5_2$ and
$(\s,\s')=(\s_1,\s_2)$, we have
\be
\label{52Phinsqh}
\begin{tiny}
\begin{aligned}
&\widehat{\Phi}^{(\s_1)}_{n}(\hbar)\widehat{\Phi}^{(\s_2)}_{n}(-\hbar)\\
&=\frac{1}{\sqrt{\delta^{(\s_1)}\delta^{(\s_2)}}} e^{\frac{V^{(\s_1)}-V^{(\s_2)}}{\hbar}}
\exp\left(\left(\xi^{(\s_1)}-\xi^{(\s_2)}\right)n^2\hbar\right)
\Bigg(1\\
&\quad+\bigg(-\frac{33}{2116}(\xi^{(\s_1)})^2 - \frac{121}{1058}\xi^{(\s_1)}
+ \frac{33}{2116}(\xi^{(\s_2)})^2 + \frac{121}{1058}\xi^{(\s_2)}\bigg)\hbar\\
&\quad + \bigg(\Big(-\frac{7}{46}(\xi^{(\s_1)})^2 + \frac{7}{46}\xi^{(\s_1)}
- \frac{7}{46}(\xi^{(\s_2)})^2 + \frac{7}{46}\xi^{(\s_2)} - \frac{2}{23}\Big)n^2\\
&\qquad + \Big(-\frac{1089}{4477456}(\xi^{(\s_2)})^2 - \frac{3993}{2238728}\xi^{(\s_2)}
+ \frac{246745}{8954912}(\xi^{(\s_1)})^2 - \frac{3993}{2238728}(\xi^{(\s_2)})^2\\
&\qquad\qquad - \frac{14641}{1119364}\xi^{(\s_2)} - \frac{172209}{8954912}\xi^{(\s_1)}
+ (\frac{246745}{8954912}(\xi^{(\s_2)})^2 - \frac{172209}{8954912}\xi^{(\s_2)}
+ \frac{2811}{4477456}\Big)\bigg)\hbar^2\\
&\quad + \bigg(\Big(-\frac{5}{276}(\xi^{(\s_1)})^2 + \frac{17}{276}\xi^{(\s_1)}
+ \frac{5}{276}(\xi^{(\s_2)})^2 - \frac{17}{276}\xi^{(\s_2)}\Big)n^4\\
&\qquad + \Big(-\frac{1925}{97336}\xi^{(\s_2)} - \frac{107}{146004}(\xi^{(\s_1)})^2
+ \frac{1925}{97336}(\xi^{(\s_2)})^2 + \frac{1525}{292008}\xi^{(\s_1)}
+ (\frac{107}{146004}(\xi^{(\s_2)})^2 - \frac{1525}{292008}\xi^{(\s_2)})\Big)n^2\\
&\qquad + \Big(\frac{2843269}{823851904}\xi^{(\s_2)}
+ \frac{3001169}{3089444640}(\xi^{(\s_1)})^2 - \frac{2843269}{823851904}(\xi^{(\s_2)})^2\\
&\qquad\qquad + \frac{220399}{2471555712}\xi^{(\s_1)}
+ (-\frac{3001169}{3089444640}(\xi^{(\s_2)})^2
- \frac{220399}{2471555712}\xi^{(\s_2)})\Big)\bigg)\hbar^3+O(\hbar^4)\Bigg) \,.
\end{aligned}
\end{tiny}
\ee
The coefficients of $\hbar^k$ for $k=4,5,6$ in the above expression can be
computed from~\eqref{52Phinh}, but they are lengthy and will not be given here.

As explained in Section~\ref{sub.asymer}, the coefficent of $\hbar^k$ in
$A(\hbar):=\sum_{n\in\BZ}\widehat{\Phi}^{(\s_1)}_{n}(2\pi i\hbar)
\widehat{\Phi}^{(\s_2)}_{n}(-2\pi i\hbar)$ depends on the summand up to
$O(\hbar^{3k+1})$, and is computed by formal Gaussian integration. Explicitly,
we have
\be
\label{52hc1}
\begin{tiny}
\begin{aligned}
A(\hbar)&=e^{\frac{2i\mathrm{Vol}}{\hbar}}
\sqrt{\frac{2\pi i}{\hbar}}\frac{1}{\sqrt{\delta^{(\s_1)}\delta^{(\s_2)}(2\xi^{(\s_1)}-2\xi^{(\s_2)})}} \\
& \times \Bigg( 1 +
\Big(-\frac{1258}{13225}(\xi^{(\s_1)}-\xi^{(\s_2)})
+\frac{7}{1840}(\xi^{(\s_1)}-\xi^{(\s_2)})^3
-\frac{963}{211600}(\xi^{(\s_1)}-\xi^{(\s_2)})^5 \Big) \hbar \\
& +\Big(
-\frac{226591}{19467200}
+\frac{88839}{7786880}(\xi^{(\s_1)}-\xi^{(\s_2)})^2
+\frac{38187}{155737600}(\xi^{(\s_1)}-\xi^{(\s_2)})^4 \Big) \hbar^2 + O(\hbar^3)
\Bigg) \,,
\end{aligned}
\end{tiny}
\ee
in complete agreement with the numerical extrapolation of the asymptotics of
$\Irot_{5_2}(0,0)(q)$.


\subsection{Horizontal asymptotics}
\label{sub.52horizontal}

We now discuss the horizontal asymptotics of the colored holomorphic blocks
when $\tau$ tends to zero just above (\emph{resp.} below) the positive
(\emph{resp.} negative) reals. The next identity~\eqref{52holborel} expresses
the holomorphic blocks as Borel resummations of the three asymptotic series.
This identity was discovered and verified in~\cite[Sec.6.4]{GGM} when $n=$ using
about 200 coefficients (of powers of $\hbar$) of the asymptotic series. When $n$
is an arbitrary integer, we use about 90 coefficients of the asymptotic series,
each a polynomial of $n$ to numerically evaluate the Borel resummation using Pad\'e
approximants. We then find numerically that for $\tau$ in a cone
$C = \arg(\th) \in (\th_0,\th_1)$ with $0< \th_0 < \th_1$ sufficiently small,
we have
\be
\label{52holborel}
\begin{aligned}
h_{n}^{(j)}(q) &= \tau^{1-j}\sum_{k=1}^{3}
s \widehat{\Phi}^{(\s_{k})}_{n}(2\pi i\tau)H_{\s_k}^{(j)}(\tq)\,,
\end{aligned}
\ee
where for $|\tq|<1$, we have
\be
\begin{tiny}
\begin{aligned}
&\begin{pmatrix}
H_{\s_1}^{(j)}(\tq)\\
H_{\s_2}^{(j)}(\tq)\\
H_{\s_3}^{(j)}(\tq)
\end{pmatrix}
=
\begin{pmatrix}
\frac{2\tq^3 - 2\tq^2 - 2\tq + 1}{-\tq^3 + \tq^2 + \tq - 1}
&
\frac{-\tq^{11/2} + 2\tq^{9/2} - \tq^{7/2} + 2\tq^{5/2}
  - \tq^{3/2} + \tq^{1/2}}{-\tq^4 + \tq^3 + \tq - 1}
 &
 -\frac{\tq^7}{-\tq^5 + \tq^3 + \tq^2 - 1}\\
 -1&0&0\\
 \frac{-\tq^3 + \tq - 1}{-\tq^3 + \tq^2 + \tq - 1}
 &
 \frac{-\tq^{11/2} + 3\tq^{9/2} - \tq^{7/2} + 2\tq^{5/2}
   - 2\tq^{3/2} + \tq^{1/2}}{-\tq^4 + \tq^3 + \tq - 1}
 &
 -\frac{\tq^7}{-\tq^5 + \tq^3 + \tq^2 - 1}
  \end{pmatrix}
  \begin{pmatrix}
 h_{0}^{(j)}(\tq)\\
 h_{1}^{(j)}(\tq)\\
 h_{2}^{(j)}(\tq)
  \end{pmatrix}\,,
\end{aligned}
\end{tiny}
\ee
and
\be
\begin{tiny}
\begin{aligned}
&\begin{pmatrix}
H_{\s_1}^{(j)}(\tq^{-1})\\
H_{\s_2}^{(j)}(\tq^{-1})\\
H_{\s_3}^{(j)}(\tq^{-1})
\end{pmatrix}
=
\begin{pmatrix}
-1&0&0\\
\frac{-3\tq^3 + 2\tq^2 + 2\tq - 2}{-\tq^3 + \tq^2 + \tq - 1}
&
\frac{\tq^5 - \tq^4 + 2\tq^3 - \tq^2 + 2\tq - 1}{-\tq^{11/2}
   + \tq^{9/2} + \tq^{5/2} - \tq^{3/2}}
&
-\frac{1}{-\tq^7 + \tq^5 + \tq^4 - \tq^2}\\
 \frac{-\tq^3 + \tq^2 - 1}{-\tq^3 + \tq^2 + \tq - 1}
&
\frac{\tq^5 - 2\tq^4 + 2\tq^3 - \tq^2 + 3\tq - 1}{-\tq^{11/2}
   + \tq^{9/2} + \tq^{5/2} - \tq^{3/2}}
&
 -\frac{1}{-\tq^7 + \tq^5 + \tq^4 - \tq^2}
\end{pmatrix}
\begin{pmatrix}
h_{0}^{(j)}(\tq^{-1})\\
h_{1}^{(j)}(\tq^{-1})\\
h_{2}^{(j)}(\tq^{-1})
\end{pmatrix} \,.
\end{aligned}
\end{tiny}
\ee
With around 90 coefficients of $\widehat{\Phi}^{(\s_{k})}_{n}$, these identities can be checked to about 10 coefficients of $\tq$.
Using Equation~\eqref{52Irot} and the above, we obtain a bilinear expression for
$\Irot(n,n')(q)$ in terms of products
$\wh\Phi^{(\s)}_n(2\pi i\tau) \wh\Phi^{(\s')}_n(-2\pi i\tau)$ times $\tq$-series.
On the fixed ray of $\tau$ near the positive real axis, we can ignore all $\tq$-terms,
and among all bilinear combinations of asymptotic series, there is precisely one
with  $(\s,\s')=(\s_1,\s_2)$ which is exponentially larger than the others.
In particular, we find that
\be
\label{52.hnnasy3}
\Irot(n,n')(q)
\sim
\widehat{\Phi}^{(\s_{1})}_{n}(2\pi i\tau) \widehat{\Phi}^{(\s_{2})}_{n'}(-2\pi i\tau)\,,
\ee
concluding the one part of Conjecture~\ref{conj.1}.

\subsection{Vertical asymptotics}
\label{sub.52vertical}

Following the method of~\cite{GZ:qseries} and using a high precision computation
of the colored holomorphic blocks when $\tau$ is small on the imaginary axis, 
we find numerically that the coloured holomorphic blocks have the following
vertical asymptotics as $\tau \downarrow 0$ 
\be
\label{52hvasy}
\begin{tiny}
\begin{aligned}
h^{(0)}_n(q) & \sim \;  \tau\:
  \wh\Phi^{(\s_1)}_n(2\pi i\tau) - \tau\:\wh\Phi^{(\s_2)}_n(2\pi i\tau) \,,
&&\quad
h^{(0)}_n(q^{-1}) \hspace{-0.13in}&& \sim  -\tau\:\wh\Phi^{(\s_3)}_n(-2\pi i\tau) \,,\\
h^{(1)}_n(q) & \sim \; -\frac{1}{2}
\wh\Phi^{(\s_1)}_n(2\pi i\tau) -\frac{1}{2} \wh\Phi^{(\s_2)}_n(2\pi i\tau) \,,
&&\quad
h^{(1)}_n(q^{-1}) \hspace{-0.13in}&& \sim  
\wh\Phi^{(\s_1)}_n(-2\pi i\tau) + \wh\Phi^{(\s_2)}_n(-2\pi i\tau) \,,\\
h^{(2)}_n(q) & \sim \;-\frac{1}{6\tau}\wh\Phi^{(\s_1)}_n(2\pi i\tau) 
+\frac{1}{6\tau} \wh\Phi^{(\s_2)}_n(2\pi i\tau) \,,
&&\quad
h^{(2)}_n(q^{-1}) \hspace{-0.13in}&& \sim  -\frac{1}{6\tau}
\wh\Phi^{(\s_3)}_n(-2\pi i\tau) \,.
\end{aligned}
\end{tiny}
\ee
As in the case of the $4_1$ knot, each asympotic statement above
involves linear combinations of asymptotic series of the
\emph{same} exponential growth rate. 

Taking the vertical asymptotics of the quadratic relation~\eqref{52quad}
of the colored holomorphic blocks gives the following quadratic relation of the
three asymptotic series (given in~\cite[Sec.3.3]{GZ:kashaev} for $n=0$)
\be
\label{52Phiquadrel}
\widehat{\Phi}^{(\s_1)}_{n}(\hbar)\widehat{\Phi}^{(\s_1)}_{n}(-\hbar)
+\widehat{\Phi}^{(\s_2)}_{n}(\hbar)\widehat{\Phi}^{(\s_2)}_{n}(-\hbar)
+\widehat{\Phi}^{(\s_3)}_{n}(\hbar)\widehat{\Phi}^{(\s_3)}_{n}(-\hbar)=0\,,
\ee
valid for all integers $n$.

Equations~\eqref{52Irot} and~\eqref{52hvasy} imply that 
\be
\begin{tiny}
\begin{aligned}
\label{52.irot.asymp}
\Irot(n,n')(q)
\sim
-\widehat{\Phi}^{(\s_{1})}_{n}(2\pi i\tau)
\widehat{\Phi}^{(\s_{1})}_{n'}(-2\pi i\tau)
+\widehat{\Phi}^{(\s_{2})}_{n}(2\pi i\tau)
\widehat{\Phi}^{(\s_{3})}_{n'}(-2\pi i\tau)
+\widehat{\Phi}^{(\s_{3})}_{n}(2\pi i\tau)
\widehat{\Phi}^{(\s_{2})}_{n'}(-2\pi i\tau) \,.
\end{aligned}
\end{tiny}
\ee
verifying Conjecture~\eqref{conj.2} for the $5_2$ knot.



\bibliographystyle{plain}
\bibliography{biblio}

\begin{thebibliography}{10}

\bibitem{AK}
J{\o}rgen~Ellegaard Andersen and Rinat Kashaev.
\newblock A {TQFT} from {Q}uantum {T}eichm\"uller theory.
\newblock {\em Comm. Math. Phys.}, 330(3):887--934, 2014.

\bibitem{holo-blocks}
Christopher Beem, Tudor Dimofte, and Sara Pasquetti.
\newblock Holomorphic blocks in three dimensions.
\newblock {\em J. High Energy Phys.}, (12):177, front matter+118, 2014.

\bibitem{BP}
Riccardo Benedetti and Carlo Petronio.
\newblock On {R}oberts' proof of the {T}uraev-{W}alker theorem.
\newblock {\em J. Knot Theory Ramifications}, 5(4):427--439, 1996.

\bibitem{BE:all-order}
Ga\"{e}tan Borot and Bertrand Eynard.
\newblock All order asymptotics of hyperbolic knot invariants from
  non-perturbative topological recursion of {A}-polynomials.
\newblock {\em Quantum Topol.}, 6(1):39--138, 2015.

\bibitem{CY}
Qingtao Chen and Tian Yang.
\newblock Volume conjectures for the {R}eshetikhin-{T}uraev and the
  {T}uraev-{V}iro invariants.
\newblock {\em Quantum Topol.}, 9(3):419--460, 2018.

\bibitem{CCGLS}
Daryl Cooper, Marc Culler, Henry Gillet, Daryl Long, and Peter Shalen.
\newblock Plane curves associated to character varieties of {$3$}-manifolds.
\newblock {\em Invent. Math.}, 118(1):47--84, 1994.

\bibitem{DK}
Renaud Detcherry and Efstratia Kalfagianni.
\newblock Gromov norm and {T}uraev-{V}iro invariants of 3-manifolds.
\newblock {\em Ann. Sci. \'{E}c. Norm. Sup\'{e}r. (4)}, 53(6):1363--1391, 2020.

\bibitem{DKY}
Renaud Detcherry, Efstratia Kalfagianni, and Tian Yang.
\newblock Turaev-{V}iro invariants, colored {J}ones polynomials, and volume.
\newblock {\em Quantum Topol.}, 9(4):775--813, 2018.

\bibitem{DGG1}
Tudor Dimofte, Davide Gaiotto, and Sergei Gukov.
\newblock 3-manifolds and 3d indices.
\newblock {\em Adv. Theor. Math. Phys.}, 17(5):975--1076, 2013.

\bibitem{DGG2}
Tudor Dimofte, Davide Gaiotto, and Sergei Gukov.
\newblock Gauge theories labelled by three-manifolds.
\newblock {\em Comm. Math. Phys.}, 325(2):367--419, 2014.

\bibitem{DG}
Tudor Dimofte and Stavros Garoufalidis.
\newblock The quantum content of the gluing equations.
\newblock {\em Geom. Topol.}, 17(3):1253--1315, 2013.

\bibitem{Gukov:largeN}
Tobias Ekholm, Angus Gruen, Sergei Gukov, Piotr Kucharski, Sunghyuk Park, and
  Piotr Sulkowski.
\newblock {$\hat{Z}$} at large {$N$}: from curve counts to quantum modularity.
\newblock Preprint 2005,
  \href{https://arxiv.org/abs/2005.13349}{arXiv:2005.13349}.

\bibitem{Ga:quasi}
Stavros Garoufalidis.
\newblock The degree of a {$q$}-holonomic sequence is a quadratic
  quasi-polynomial.
\newblock {\em Electron. J. Combin.}, 18(2):Paper 4, 23, 2011.

\bibitem{GGM:peacock}
Stavros Garoufalidis, Jie Gu, and Marcos Mari\~no.
\newblock Peacock patterns and resurgence in complex {C}hern-{S}imons theory.
\newblock Preprint 2020,
  \href{https://arxiv.org/abs/2012.00062}{arXiv:2012.00062}.

\bibitem{GGM}
Stavros Garoufalidis, Jie Gu, and Marcos Mari\~{n}o.
\newblock The resurgent structure of quantum knot invariants.
\newblock {\em Comm. Math. Phys.}, 386(1):469--493, 2021.

\bibitem{GGMW:trivial}
Stavros Garoufalidis, Jie Gu, Marcos Mari\~no, and Campbell Wheeler.
\newblock Resurgence of {C}hern-{S}imons theory at the trivial flat connection.
\newblock Preprint 2021,
  \href{https://arxiv.org/abs/2111.04763}{arXiv:2111.04763}.

\bibitem{GHHR}
Stavros Garoufalidis, Craig Hodgson, Neil Hoffman, and J.~Hyam Rubinstein.
\newblock The 3{D}-index and normal surfaces.
\newblock {\em Illinois J. Math.}, 60(1):289--352, 2016.

\bibitem{GHRS}
Stavros Garoufalidis, Craig Hodgson, Hyam Rubinstein, and Henry Segerman.
\newblock 1-efficient triangulations and the index of a cusped hyperbolic
  3-manifold.
\newblock {\em Geom. Topol.}, 19(5):2619--2689, 2015.

\bibitem{GK:qseries}
Stavros Garoufalidis and Rinat Kashaev.
\newblock From state integrals to {$q$}-series.
\newblock {\em Math. Res. Lett.}, 24(3):781--801, 2017.

\bibitem{GK:mero}
Stavros Garoufalidis and Rinat Kashaev.
\newblock A meromorphic extension of the 3{D} index.
\newblock {\em Res. Math. Sci.}, 6(1):Paper No. 8, 34, 2019.

\bibitem{GL:nahm}
Stavros Garoufalidis and Thang T.~Q. L{\^e}.
\newblock Nahm sums, stability and the colored {J}ones polynomial.
\newblock {\em Res. Math. Sci.}, 2:Art. 1, 55, 2015.

\bibitem{GW:qmod}
Stavros Garoufalidis and Campbell Wheeler.
\newblock Modular $q$-holonomic modules.
\newblock Preprint 2022,
  \href{https://arxiv.org/abs/2203.17029}{arXiv:2203.17029}.

\bibitem{GZ:qseries}
Stavros Garoufalidis and Don Zagier.
\newblock Knots and their related $q$-series.
\newblock Preprint 2021.

\bibitem{GZ:kashaev}
Stavros Garoufalidis and Don Zagier.
\newblock Knots, perturbative series and quantum modularity.
\newblock Preprint 2021,
  \href{https://arxiv.org/abs/2111.06645}{arXiv:2111.06645}.

\bibitem{Gukov:resurgence}
Sergei Gukov, Marcos Mari\~no, and Pavel Putrov.
\newblock Resurgence in complex {C}hern-{S}imons theory.
\newblock Preprint 2016,
  \href{https://arxiv.org/abs/1605.07615}{arXiv:1605.07615}.

\bibitem{Gukov:BPS}
Sergei Gukov, Du~Pei, Pavel Putrov, and Cumrun Vafa.
\newblock B{PS} spectra and 3-manifold invariants.
\newblock {\em J. Knot Theory Ramifications}, 29(2):2040003, 85, 2020.

\bibitem{HKS}
Craig Hodgson, Andrew Kricker, and Rafał Siejakowski.
\newblock On the asymptotics of the meromorphic 3d-index.
\newblock Preprint 2021,
  \href{https://arxiv.org/abs/2109.05355}{arXiv:2109.05355}.

\bibitem{Jones}
Vaughan Jones.
\newblock Hecke algebra representations of braid groups and link polynomials.
\newblock {\em Ann. of Math. (2)}, 126(2):335--388, 1987.

\bibitem{K95}
Rinat Kashaev.
\newblock A link invariant from quantum dilogarithm.
\newblock {\em Modern Phys. Lett. A}, 10(19):1409--1418, 1995.

\bibitem{KR}
Anatoly Kirillov and Nikolai Reshetikhin.
\newblock Representations of the algebra {${U}_q({\rm sl}(2)),\;q$}-orthogonal
  polynomials and invariants of links.
\newblock In {\em Infinite-dimensional {L}ie algebras and groups
  ({L}uminy-{M}arseille, 1988)}, volume~7 of {\em Adv. Ser. Math. Phys.}, pages
  285--339. World Sci. Publ., Teaneck, NJ, 1989.

\bibitem{KZ:periods}
Maxim Kontsevich and Don Zagier.
\newblock Periods.
\newblock In {\em Mathematics unlimited---2001 and beyond}, pages 771--808.
  Springer, Berlin, 2001.

\bibitem{Koutschan}
Christoph Koutschan.
\newblock {\em Advanced Applications of the Holonomic Systems Approach}.
\newblock PhD thesis, RISC, Johannes Kepler University, Linz, Austria, 2009.

\bibitem{Sauzin:divergent}
Claude Mitschi and David Sauzin.
\newblock {\em Divergent series, summability and resurgence. {I}}, volume 2153
  of {\em Lecture Notes in Mathematics}.
\newblock Springer, [Cham], 2016.
\newblock Monodromy and resurgence, With a foreword by Jean-Pierre Ramis and a
  preface by \'{E}ric Delabaere, Mich\`ele Loday-Richaud, Claude Mitschi and
  David Sauzin.

\bibitem{Neumann:CS}
Walter Neumann.
\newblock Extended {B}loch group and the {C}heeger-{C}hern-{S}imons class.
\newblock {\em Geom. Topol.}, 8:413--474, 2004.

\bibitem{Roberts}
Justin Roberts.
\newblock Skein theory and {T}uraev-{V}iro invariants.
\newblock {\em Topology}, 34(4):771--787, 1995.

\bibitem{Singer:qMZV}
Johannes Singer.
\newblock {$q$}-analogues of multiple zeta values and their application in
  renormalization.
\newblock In {\em Periods in quantum field theory and arithmetic}, volume 314
  of {\em Springer Proc. Math. Stat.}, pages 293--325. Springer, Cham, [2020]
  \copyright 2020.

\bibitem{Thurston}
William Thurston.
\newblock {\em Three-dimensional geometry and topology. {V}ol. 1}, volume~35 of
  {\em Princeton Mathematical Series}.
\newblock Princeton University Press, Princeton, NJ, 1997.
\newblock Edited by Silvio Levy, \url{http://msri.org/publications/books/gt3m}.

\bibitem{Tu:YB}
Vladimir Turaev.
\newblock The {Y}ang-{B}axter equation and invariants of links.
\newblock {\em Invent. Math.}, 92(3):527--553, 1988.

\bibitem{TV}
Vladimir Turaev and Oleg Viro.
\newblock State sum invariants of {$3$}-manifolds and quantum {$6j$}-symbols.
\newblock {\em Topology}, 31(4):865--902, 1992.

\bibitem{WZ}
Herbert~S. Wilf and Doron Zeilberger.
\newblock An algorithmic proof theory for hypergeometric (ordinary and
  ``{$q$}'') multisum/integral identities.
\newblock {\em Invent. Math.}, 108(3):575--633, 1992.

\end{thebibliography}
\end{document}